\newcounter{numberofremark}
\newcommand\nothing[1]{}
\newcommand{\dcl}{\DeclareMathOperator}
\dcl\D{D}
\dcl\Hom{\textup{Hom}}
\dcl\im{\textup{Im}}
\dcl\p{\mathfrak{p}}
\dcl\proj{proj}
\dcl\Z{\mathbb{Z}}
\renewcommand\k{{\Bbbk}}
\newlength\yStones
\newlength\xStones
\newlength\xxStones
\def\Stones{\pst@object{Stones}}
\def\Stones@i#1{%
  \pst@killglue%
  \begingroup%
  \use@par%
  \setlength\xxStones{\xStones}%
  \expandafter\Stones@ii#1,,\@nil
  \endgroup
  \global\addtolength\xStones{0.6cm}%
  \global\addtolength\yStones{-7.5mm}}%
\def\Stones@ii#1,#2,#3\@nil{%
  \rput(\xxStones,\yStones){%
    \psframebox[framesep=0]{%
      \parbox[c][6mm][c]{11mm}{\makebox[11mm]{$#1$}}}}%
  \addtolength\xxStones{1.2cm}%
  \ifx\relax#2\relax\else\Stones@ii#2,#3\@nil\fi}
\def\Stone#1{\fbox{\makebox[8mm]{\strut#1}}\kern2pt}
\newtheorem{theorem}{Theorem}[section]
\newtheorem{lemma}[theorem]{Lemma}
\newtheorem{corollary}[theorem]{Corollary}
\newtheorem{proposition}[theorem]{Proposition}
\newtheorem{example}[theorem]{Example}
\newtheorem{remark}[theorem]{Remark}
\newtheorem{definition}[theorem]{Definition}
\begin{document}
\allowdisplaybreaks

\title{Cochain complexes over a functor}

\author[Germ\'an Benitez]{Germ\'an Benitez}
\address{Departamento de Matem\'atica \\ Universidade Federal do Amazonas \\ Manaus \\ Brazil}
\email{gabm@ufam.edu.br}

\author[Pedro Rizzo]{Pedro Rizzo}
\address{\noindent Instituto de Matem\'aticas \\ Universidad de Antioquia \\ Medell\'in Ant \\ Colombia}
\email{pedro.hernandez@udea.edu.co}

\date{\today}


\begin{abstract}
In this paper we propose unifying the categories of cochain complexes $\textup{Ch}(\mathcal{C})$ and modules $\widehat{A}\textup{-mod}$ over a repetitive algebra $\widehat{A}$. Motivated by their striking similarities and importance, we introduce a novel category encompassing both. Our analysis explores key properties of this unified category, highlighting its parallels and divergences from the original structures. We study whether it preserves crucial aspects like limits, colimits, products, coproducts, and abelianity. Besides, we establish a family of projective and injective indecomposable objects within this framework. Moving beyond theoretical foundations, we examine the influence and interaction over these novel categories of the category of endofunctors and its monoidal structure. Finally, we explore the implications of our constructions over representation theory of algebras and algebraic geometry.
\end{abstract}

\subjclass[2020]{Primary 18G35, 18M05, Secondary 18A35, 16E35, 16G20, 14A30}
 
\keywords{Cochain complexes, monoidal categories, repetitive algebra}

\maketitle


\tableofcontents    


\section{Introduction}

Let $A$ be a basic finite dimensional $\Bbbk$-algebra. We denote by $A$-mod the finitely generated modules on $A$ and by $D(-)=\textup{Hom}(-,\Bbbk)$ the duality functor on $A$-mod. The repetitive algebra $\widehat{A}$ of $A$, introduced by Hughes and Waschb\"usch in \cite{HW83}, plays a crucial role in the representation theory of algebras. We highlights some important properties of the repetitive algebra, such as: is an selfinjective infinite-dimensional algebra without unity; $\widehat{A}$ is a special biserial algebra if and only if $A$ is a gentle algebra  (see \cite{Sch}). Additionally, the category of finitely generated left modules over $\widehat{A}$, denoted by $\widehat{A}-$mod, is a Frobenius category (see \cite{H88}).

A key result by Happel in his seminal book \cite{H88} is a full and faithful embedding of the bounded derived (triangulated) category $D^b(A)$ into the stable (triangulated) category $\widehat{A}\textup{-\underline{mod}}$. This embedding is even an equivalence of triangulated categories if $A$ has finite global dimension. This result has undoubtedly had a significant impact on the development of the representation theory of algebras.

In the realm of geometry, building on prior work by Bernstein--Gelfand--Gelfand \cite{BGG} and Beilinson \cite{Beil}, Dowbor and Meltzer in \cite{DM1} established a remarkable equivalence between the (triangulated) categories $D^b(\textup{Coh}(\mathbb{P}_{\scriptstyle\mathbb{C}}^n))$, the bounded derived category of coherent sheaves over the projective $n$-space, and $\widehat{\Lambda}_n\textup{-\underline{mod}}$. Here $\Lambda_n$, represents the (finite-dimensional) exterior algebra of the space $\mathbb{C}^{n+1}$. This equivalence has significant implications and applications in several areas such as the results in \cite{DM2} and \cite{FGRV}.

While the equivalences with bounded derived categories showcase the repetitive algebras significance, its applications and theoretical implications extend beyond this. Repetitive algebras serve as crucial tools in several problems in other contexts. Indeed, a first problem is the classification of self-injective algebras. More precisely, through the lens of Galois coverings, they offer insights and classification techniques for these special algebras  (see \cite{SY2}). A second problem is related to understanding of Nakajima varieties. In fact, Leclerc and Plamodon in \cite{LP} used repetitive algebras of Dynkin quivers to construct certain Nakajima varieties, which are geometric objects linked to perverse sheaves over quantum loop algebras. As a final example of the diverse kind of problems where the repetitive algebras contribute significantly, is in the study and construction of equivalences in the stable category of repetitive modules over an algebra via Wakamatsu-tilting modules (see \cite{w21}), which is the natural context to generalize the equivalences beyond the equivalences between derived categories of an algebra.

This discussion has exhibited the deep significance of the repetitive algebra, its modules, and derived properties from these. Their reach and impact extend across diverse areas of mathematics, motivating us to further explore and potentially generalize these concepts. Previous advancements have been made by several researchers in this direction, including Keller \cite{K} and Asashiba-Nakashima \cite{AN}, who addressed specific cases of generalizations and equivalences. In this paper our primary interest, however, lies in resolving the following three key questions:
\begin{itemize}
\item {\bf Unifying graded categories constructions:} Can we find a single, overarching definition for the categories of complex of modules over $A$ and modules over its repetitive algebra $\widehat{A}$? This questions arise from the close parallels in their constructions as graduated categories. Examining their similarities has aided in understanding $\widehat{A}$-modules by drawing inspiration from complex of $A$-modules (see \cite{Gir18} and \cite{CGV})
\item{\bf Formalizing main claims:} Can we establish a clear and structured framework to formalize certain assertions made by Happel \cite{H88} regarding constructions in $\widehat{A}$-modules?
\item{\bf Demystifying tensorization:} Under what conditions is the tensorization with $D(A)$ crucial in building modules over $\widehat{A}$? This question arises from alternative ``repetitive category'' constructions in the literature, where different modules are used for tensorization (see \cite[Section 3.3, p. 153]{Ass18}  and \cite[Section 1.3]{ABS09}).
\end{itemize}

Our response to each item above is outlined in the following order: For the first item is addressed in Sections \ref{sec:F-chain}, \ref{sec:propert} and \ref{sec:proj_inj}. The second item is supported by Theorem \ref{thm:adjoint}, Corollary \ref{cor:adjconsq} and Corollary \ref{cor:Frob}. Finally, the third item is explained in Subsection \ref{subsec:GRA}, specifically, Theorem \ref{prop:class}. Furthermore, we present a novel perspective, explored in Section \ref{sec:LF(C)vsEnd(C)}, that delves into the fascinating properties of the monoidal category of covariant $\Bbbk$-endofunctors over an abelian category. This perspective was born from a desire to comprehensively address the initial questions. Additionally, Subsection \ref{geom_app} offers valuable applications of these concepts within the realm of geometry. In essence, this work lays the groundwork for a new area of study we term \emph{$F-$graduate categories over $\mathcal{C}$} (see Remark \ref{rem:falgebras}). In conclusion, this paper provides a comprehensive response to the initial inquiries, drawing from several sources and establishing new paths of exploration.

\subsection{Roadmap and overview of the results}
This article is organized as follows. In Section~\ref{sec:F-chain}, we introduce the categories of left (resp. right) cochain complexes over an endofunctor $F:\mathcal{C}\longrightarrow\mathcal{C}$, namely, $\mathfrak{L}_F(\mathcal{C})$ (resp. $\mathfrak{R}_F(\mathcal{C})$) and its graded version $\mathfrak{gr}_F(\mathcal{C})$, this categories unify naturally the categories of cochain complexes and of modules $\widehat{A}$-mod. In Section~\ref{sec:propert}, we study some properties to
show different and similar points with respect to the category of cochain complexes, for instance, when these categories preserve kernel, cokernel, abelianity, (co-)completeness, direct limit, among others. In Section~\ref{sec:proj_inj}, motivated with the relation with the category $\widehat{A}$-mod, we dedicate this section to find projective and injective objects in $\mathfrak{L}_F(\mathcal{C})$ and $\mathfrak{R}_F(\mathcal{C})$ with the aim to see if these categories are far to be Frobenius categories. Section \ref{sec:LF(C)vsEnd(C)} delves into new directions using a natural ``action'' of the monoidal category End$(\mathcal{C})$ on the 2-categories $\mathfrak{L}$ and $\mathfrak{R}$. The former and latter categories are defined by $\mathfrak{L}_F(\mathcal{C})$ and $\mathfrak{R}_F(\mathcal{C})$, respectively, as objects and functors as morphisms. Remarkably, this ``action'' preserves the monoidal structure, transforming the correspondence into a monoidal correspondence applied to specific subcategories of $\mathfrak{L}$ and $\mathfrak{R}$. We call these subcategories \emph{$F-$graduate categories over $\mathcal{C}$}. In addition, we establish significant relationships between the categories $\mathfrak{L}_F(\mathcal{C})$ and $\mathfrak{R}_G(\mathcal{C})$ when $(F,G)$ is an adjoint pair. This findings, motivated by seeking an answer to the third key question above, offers a promising approach which to extend these results to other contexts. Finally, in Section \ref{sec:apply}, we explore the practical implications of our constructions, focusing on their meaningful impacts within both representation theory of algebras and algebraic geometry.

While our work presents novel tools and constructions for cochain complexes over endofunctors, it is worth acknowledging that similar ideas were explored in specific cases by previous authors. In \cite{FGR75} the authors defined the trivial extension of an abelian category by an endofunctor as a generalization of the category of modules over a trivial extension of a ring. They established various properties and studied their homological implications. This approach recently appeared in \cite{Mao23}, where the author delved deeper into projective and injective objects and their applications in Morita context rings. However, both these works differ significantly from our focus, results, and potential applications.


\section{Cochain complexes over a functor}
\label{sec:F-chain}

Let $\mathcal{C}$ be an additive category and let $F:\mathcal{C}\longrightarrow\mathcal{C}$ be a covariant functor. Denote by $\mathfrak{L}_F(\mathcal{C})$ the category with objects defined to be a family $M=\left(M^n,d^n_M\right)_{n\in\mathbb{Z}}$ of objects $M^n$ in $\mathcal{C}$ and morphisms $d_M^n:F(M^n)\longrightarrow M^{n+1}$ in $\mathcal{C}$ satisfying $d^{n+1}_MF(d^n_M)=0$ for all $n\in\mathbb{Z}$. This condition guarantees that the following composition in $\mathcal{C}$ vanishes for all  $n\in\mathbb{Z}$:
	$$
	\xymatrix{
	F^2(M^n)\ar[r]^-{F\left(d^n_M\right)} & F(M^{n+1})\ar[r]^-{d^{n+1}_M} & M^{n+2}.
	}
	$$
We will call these objects $F$-\emph{cochain complexes} or \emph{cochain complexes over $F$}. We can visualize an $F-$cochain complex diagrammatically as
	$$
	\xymatrix{
	M:=\cdots\ar@{~>}[r] & M^{n-1}\ar@{~>}[r]^-{d^{n-1}_M} & M^n\ar@{~>}[r]^-{d^n_M} & M^{n+1}\ar@{~>}[r]^-{d^{n+1}_M} & M^{n+2}\ar@{~>}[r] & \cdots}
	$$
A morphism $\varphi:M\longrightarrow N$ in $\mathfrak{L}_F(\mathcal{C})$ is then a family $\varphi=(\varphi^n)_{n\in\mathbb{Z}}$ of morphisms $\varphi^n:M^n\longrightarrow N^n$ in $\mathcal{C}$ satisfying $\varphi^{n+1}d^n_M=d^n_NF(\varphi^n)$ for all $n\in\Z$. This condition ensures that the following diagram commutes in $\mathcal{C}$ for all $n\in\Z$:
	$$
	\xymatrix{
	F(M^n)\ar[r]^-{d^n_M}\ar[d]_{F(\varphi^n)} & M^{n+1}\ar[d]^{\varphi^{n+1}} \\
	F(N^{n})\ar[r]_-{d^n_N}  & N^{n+1}
	}
	$$
We say that such a diagram $F$-\emph{commutes} and depict it as:
	$$
	\xymatrix{	
	M=\ar@<-13pt>[d]_{\varphi}\cdots\ar@{~>}[r] & M^{n-1}\ar@{~>}[r]^-{d^{n-1}_M}\ar[d]_{\varphi^{n-1}} & M^n\ar@{~>}[r]^-{d^n_M} \ar[d]_{\varphi^n} & M^{n+1}\ar@{~>}[r]^-{d^{n+1}_M}\ar[d]_{\varphi^{n+1}} & M^{n+2}\ar@{~>}[r]\ar[d]_{\varphi^{n+2}} & \cdots\\
	N=\cdots\ar@{~>}[r] 						& N^{n-1}\ar@{~>}[r]_-{d^{n-1}_N}                       & N^{n}\ar@{~>}[r]_-{d^n_N}                   & N^{n+1}\ar@{~>}[r]_-{d^{n+1}_N}        				  & N^{n+2}\ar@{~>}[r] & \cdots}
	$$
We introduce a full subcategory $\mathfrak{L}^b_F(\mathcal{C})$ within $\mathfrak{L}_F(\mathcal{C})$, consisting of objects $M=\left(M^n,d^n_M\right)_{n\in\mathbb{Z}}$ with the property that almost all $M^n$ are $0$.


\begin{example}
\label{example:L_F(C)}
\noindent

\begin{enumerate}[(i)]
\item $\mathfrak{L}_F(\mathcal{C})$ coincides with the category of cochain complex of $\mathcal{C}$, when $F=\text{Id}_{\mathcal{C}}$ is the identity functor.

\item Consider a $\Bbbk$-algebra $A$, where $\Bbbk$ is a commutative field. We fix the following notations: $A$-\textup{Mod} (resp., $A$-\textup{mod}) denotes the category of (resp., finitely generated) left $A$-modules; $D=\Hom_{\k}(-,\k)$ represents the standard duality on $A$-\textup{Mod} (resp., $A$-\textup{mod}); $\widehat{A}$ denotes the repetitive algebra of $A$, introduced by D. Hughes and J. Waschb\"usch in \cite{HW83}. In \cite{Gir18}, the author state that $\mathfrak{L}^b_{DA\otimes_A -}\left(A\text{-\textup{mod}}\right)$ is equivalent to the ca\-te\-go\-ry of $\widehat{A}$-\textup{mod}.
\end{enumerate}
\end{example}


\begin{remark}\rm
\label{rem}
\noindent

\begin{enumerate}[(i)]
\item In a similar manner, we define categories $\mathfrak{R}_F(\mathcal{C})$ and $\mathfrak{R}^b_F(\mathcal{C})$ whose objects and morphisms, analogous to those in $\mathfrak{L}_F(\mathcal{C})$, possess the following structure: Objects are the families $M=\left(M^n,d^n_M\right)_{n\in\mathbb{Z}}$, with $M^n\in \mathcal{C}$, and morphisms $d_M^n:M^n\longrightarrow F(M^{n+1})$ in $\mathcal{C}$ satisfying $F(d_M^{n+1})d_M^n=0$, holds for all $n\in\Z$. The full subcategory $\mathfrak{R}^b_F(\mathcal{C})$ of $\mathfrak{R}_F(\mathcal{C})$ is defined by objects $M=\left(M^n,d^n_M\right)_{n\in\mathbb{Z}}$ where almost all $M^n$ are zero.

\item Similar to the cochain and chain complex for an abelian category, we can define left and right chain complexes over a covariant functor on any additive category.
\end{enumerate}

\end{remark}

We can interpret these categories through of the graded perspective, as follows: Consider the category $\mathfrak{gr}_F(\mathcal{C})$, whose objects are $\mathbb{Z}$-graded $M=\bigoplus\limits_{n\in\mathbb{Z}}M_n$ in $\mathcal{C}$ equipped with a morphism $d_M:F(M)\longrightarrow M$ such that $d_MF(d_M)=0$ and $d_M(F(M_n))\subseteq M_{n+1}$ (i.e., $d_M(F(M_n))$ is a subobject of $M_{n+1}$) for all $n\in\mathbb{Z}$. We denote such an object by $(M,d_M)$. Morphisms in $\mathfrak{gr}_F(\mathcal{C})$ are degree-preserving morphism $\varphi:(M,d_M)\longrightarrow (N,d_N)$ in $\mathcal{C}$ that satisfy $\varphi d_M=d_NF(\varphi)$. This means the following diagram commutes in $\mathcal{C}$
	$$
	\xymatrix{
	F(M)\ar[r]^-{d_M}\ar[d]_{F(\varphi)} & M\ar[d]^{\varphi} \\
	F(N)\ar[r]_-{d_N}  & N
	}
	$$
	



\begin{example}
Let $F=A\otimes_{\Bbbk}-\in\textup{End}(\mathcal{C})$ be the endofunctor over the category $\mathcal{C}$ of $\Bbbk$-vector space, where $A$ is a $\Bbbk$-algebra. Note that, the objects in $\mathfrak{gr}_F(\mathcal{C})$ are $\mathbb{Z}$-graded $\Bbbk$-vector spaces $M=\bigoplus\limits_{n\in\Z}M_n$ equipped with a linear map $m:A\otimes_{\Bbbk}M\longrightarrow M$ such that $m(1\otimes m)=0$ and $m(A\otimes_{\Bbbk}M_n)\subseteq M_{n+1}$ for all $n$. In other words, $\mathfrak{gr}_F(\mathcal{C})$ is the category of $A$-modules $M=\bigoplus\limits_{n\in\Z}M_n$ such that 
	$$
	A^2M=0\ \ \text{and }\ AM_n\subseteq M_{n+1}\ \ \text{for all $n\in\mathbb{Z}$.}
	$$
Modules with the first property are well-known as \emph{$2$-nilpontent modules}. Note that this condition is automatically satisfied for algebras with \emph{zero square radical} or \emph{zero-algebras}.
\end{example}


\begin{remark}\rm
Following \cite[p. 215]{GH04}, the $\mathfrak{gr}_F(\mathcal{C})$ is a subcategory of the category of $F$-algebras. Moreover, if additionally there exist natural transformations $\mu:F^2\longrightarrow F$ and $\eta:1_{\mathcal{C}}\longrightarrow F$, with $d_M\eta=\mu$ for all $(M,d_M)\in\mathfrak{gr}_F(\mathcal{C})$, then $\mathfrak{gr}_F(\mathcal{C})$ is a subcategory of the category of algebras for the monad $(F,\mu,\eta)$, in the sense \cite[p. 136]{Mac71} and \cite[p. 259]{Awo10}. 

\end{remark}


\begin{theorem}\label{thm:graduate}
Let $\mathcal{C}$ be an additive category and let $F:\mathcal{C}\longrightarrow\mathcal{C}$ be a covariant functor. Under the above notations, the categories $\mathfrak{L}_F(\mathcal{C})$ and $\mathfrak{gr}_F(\mathcal{C})$ are isomorphics.
\end{theorem}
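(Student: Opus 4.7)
The plan is to construct an explicit pair of mutually inverse functors $\Phi:\mathfrak{L}_F(\mathcal{C})\longrightarrow\mathfrak{gr}_F(\mathcal{C})$ and $\Psi:\mathfrak{gr}_F(\mathcal{C})\longrightarrow\mathfrak{L}_F(\mathcal{C})$, exploiting the fact that both categories package the same combinatorial data — a $\mathbb{Z}$-indexed family of objects together with ``differentials'' squaring to zero — but in two different presentations. The first is as a sequence with arrows $F(M^n)\to M^{n+1}$; the second is as a single graded object with one global morphism $F(M)\to M$ compatible with the grading.

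First I would define $\Phi$. Given $M=(M^n,d_M^n)_{n\in\mathbb{Z}}$ in $\mathfrak{L}_F(\mathcal{C})$, set $\widetilde{M}:=\bigoplus_{n\in\mathbb{Z}}M^n$ with grading $\widetilde{M}_n:=M^n$, and define the differential $d_{\widetilde{M}}:F(\widetilde{M})\longrightarrow\widetilde{M}$ as the unique morphism determined by the universal property of the coproduct and the requirement
$$d_{\widetilde{M}}\circ F(\iota_n)=\iota_{n+1}\circ d_M^n,$$
where $\iota_n:M^n\hookrightarrow\widetilde{M}$ is the canonical inclusion. By construction $d_{\widetilde{M}}(F(M^n))\subseteq M^{n+1}$, and the identity $d_{\widetilde{M}}\circ F(d_{\widetilde{M}})=0$ follows graded piece by graded piece from $d^{n+1}_M\circ F(d^n_M)=0$ together with the functoriality of $F$. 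On a morphism $\varphi=(\varphi^n)_n$, define $\Phi(\varphi):\widetilde{M}\to\widetilde{N}$ as the morphism induced by the family $(\iota_n^N\circ\varphi^n)_n$; the $F$-commutativity of each square for $\varphi^n$ gives exactly the single commuting square $\Phi(\varphi)\circ d_{\widetilde{M}}=d_{\widetilde{N}}\circ F(\Phi(\varphi))$ required in $\mathfrak{gr}_F(\mathcal{C})$.

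Next I would define $\Psi$ in the opposite direction. Given $(M,d_M)\in\mathfrak{gr}_F(\mathcal{C})$ with $M=\bigoplus_nM_n$, the condition $d_M(F(M_n))\subseteq M_{n+1}$ means that $d_M\circ F(\iota_n)$ factors uniquely through $\iota_{n+1}$, producing a morphism $d_M^n:F(M_n)\longrightarrow M_{n+1}$. Set $\Psi(M,d_M):=(M_n,d_M^n)_{n\in\mathbb{Z}}$; the relation $d_M^{n+1}\circ F(d_M^n)=0$ is extracted from $d_M\circ F(d_M)=0$ by projecting onto the $(n+2)$-th summand. On a degree-preserving morphism $\varphi:(M,d_M)\to(N,d_N)$, define $\Psi(\varphi)^n$ as the restriction of $\varphi$ to the $n$-th summand, and verify that the graded commuting square decomposes into $F$-commuting squares for each $n$.

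Finally I would verify functoriality of both $\Phi$ and $\Psi$ (preservation of identities and composition is immediate from the universal properties used) and check the equalities $\Psi\circ\Phi=\mathrm{Id}_{\mathfrak{L}_F(\mathcal{C})}$ and $\Phi\circ\Psi=\mathrm{Id}_{\mathfrak{gr}_F(\mathcal{C})}$ on objects and morphisms; each reduces to uniqueness in the universal property of $\bigoplus_n M^n$. The only real technical point — and the one I expect to require the most care — is the interplay between $F$ and the coproduct: the morphism $d_{\widetilde{M}}$ in the definition of $\Phi$ is specified only by its values on each $F(\iota_n)$, so one must either assume $F$ preserves the relevant coproduct (so that $F(\widetilde{M})\cong\bigoplus_nF(M^n)$ and $d_{\widetilde{M}}$ is literally the coproduct morphism of the $\iota_{n+1}\circ d_M^n$) or adopt the convention, implicit in the paper's use of the inclusion $d_M(F(M_n))\subseteq M_{n+1}$, that the graded object is formally the family $(M_n)_n$ so that the passage between the two formalisms is a bookkeeping equivalence. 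In either interpretation the two functors are well defined and mutually inverse, proving the claimed isomorphism of categories.
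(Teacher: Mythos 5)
Your proposal is correct and follows essentially the same route as the paper: the paper's proof likewise consists of writing down the two mutually inverse functors, sending $M$ to $\bigl(\bigoplus_{n}M^n,\bigoplus_{n}d^n_M\bigr)$ in one direction and extracting components via $d^n_M=\pi_{n+1}d_MF(\iota_n)$ in the other, and asserting that the composites are identities. The technical point you flag about the interplay between $F$ and the infinite coproduct is genuine and is silently elided in the paper (which writes $\bigoplus_n d^n_M$ as a map out of $F\bigl(\bigoplus_n M^n\bigr)$ without a comparison morphism, and does not even assume the $\mathbb{Z}$-indexed coproduct exists in the additive category $\mathcal{C}$), so your explicit resolution of it is a strength rather than a deviation.
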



\begin{proof}
We consider the following covariant functors: 
    $$
    \begin{array}{cclcccccl}
    \mathcal{F}:\mathfrak{L}_F(\mathcal{C})&\longrightarrow & \mathfrak{gr}_F(\mathcal{C})&&&&\mathcal{G}:\mathfrak{gr}_F(\mathcal{C})&\longrightarrow & \mathfrak{L}_F(\mathcal{C})\\
    \ \ \ M &\longmapsto & \mathcal{F}(M):=\left(\bigoplus\limits_{n\in\mathbb{Z}}M^n,\bigoplus\limits_{n\in\mathbb{Z}}d^n_M\right)&&&& \ \ \ \left(M,d_M\right) &\longmapsto & \mathcal{G}(\left(M,d_M\right)):=\left(M_n, d^n_M\right)_{n\in\mathbb{Z}}\\
    \ \ \ \varphi &\longmapsto & \mathcal{F}(\varphi):=\bigoplus\limits_{n\in\mathbb{Z}}\varphi^n&&&& \ \ \ \varphi &\longmapsto & \mathcal{G}(\varphi):=\left(\varphi^n\right)_{n\in\mathbb{Z}}
    \end{array}
    $$
where, for the functor $\mathcal{G}$, we write $d^n_M=\pi_{n+1}d_MF(\iota_n)$ and  $\varphi^n=\pi_{n}\varphi \iota_n$, with the canonical inclusion $\iota_n:M_n\longrightarrow M$ and canonical projection $\pi_n:M\longrightarrow M_n$ for all $n\in\mathbb{Z}$. Follows from these covariant functors that 
	$$
	\mathcal{FG}=1_{\mathfrak{gr}_F(\mathcal{C})}\ \ \mbox{and}\ \ \mathcal{GF}=1_{\mathfrak{L}_F(\mathcal{C})}.
	$$	

\end{proof}


The following result is an extension of Proposition 3 in \cite{Gir18}.

\begin{corollary}
Let $A$ be a $\Bbbk$-algebra. The categories $\widehat{A}$-\textup{Mod}, $\mathfrak{L}_{DA\otimes_A -}(A\text{-\textup{Mod}})$ and $\mathfrak{gr}_{DA\otimes -}\left(A\text{-\textup{Mod}}\right)$ are isomorphic. Moreover, the categories $\widehat{A}$-\textup{mod}, $\mathfrak{L}^b_{DA\otimes_A -}(A\text{-\textup{mod}})$ and $\mathfrak{gr}_{DA\otimes -}\left(A\text{-\textup{mod}}\right)$ are isomorphic.
\end{corollary}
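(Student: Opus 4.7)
The proof reduces to combining Theorem~\ref{thm:graduate} with an extension of Proposition~3 of \cite{Gir18} from the finitely generated to the arbitrary setting. Indeed, Theorem~\ref{thm:graduate} applied to $\mathcal{C} = A$-Mod (resp.\ $A$-mod) with $F = DA\otimes_A -$ immediately delivers the isomorphism $\mathfrak{L}_F(A\text{-Mod}) \cong \mathfrak{gr}_F(A\text{-Mod})$, and one only needs to verify that the functors $\mathcal{F}$ and $\mathcal{G}$ in that proof preserve the finite-support condition in order to obtain the corresponding statement for $\mathfrak{L}^b_F(A\text{-mod})$ and the bounded version of $\mathfrak{gr}_F(A\text{-mod})$. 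The substantive work is therefore to produce an isomorphism $\widehat{A}\text{-Mod} \cong \mathfrak{L}_F(A\text{-Mod})$; the finitely generated case then follows by restriction.

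To construct this latter isomorphism, I would recall the explicit matrix description of the (non-unital) repetitive algebra $\widehat{A}$: its elements are finitely-supported doubly-infinite matrices with $A$-entries on the diagonal, $DA$-entries on the super-diagonal, and zeros elsewhere, multiplied by matrix multiplication using the $A$-bimodule structure on $DA$. Writing $e_n$ for the diagonal idempotent in position $(n,n)$, any unitary left $\widehat{A}$-module $M$ decomposes as $M = \bigoplus_{n\in\mathbb{Z}} M^n$ with $M^n := e_n M$ a left $A$-module. The residual action of the super-diagonal entries packages into a family of $A$-linear maps $d_M^n : DA \otimes_A M^n \to M^{n+1}$, and the relation $d_M^{n+1} \circ F(d_M^n) = 0$ is forced by the vanishing in $\widehat{A}$ of products of any two super-diagonal entries (since the $(n,n+2)$-block of $\widehat{A}$ is zero). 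Conversely, given $(M^n, d_M^n) \in \mathfrak{L}_F(A\text{-Mod})$, one endows $\bigoplus_n M^n$ with an $\widehat{A}$-module structure by letting diagonal entries act via the $A$-module structures on each $M^n$ and super-diagonal entries act via the $d_M^n$. Morphisms correspond transparently in both directions, and both composites are readily the identity.

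For the finitely generated version, the identification $\widehat{A}\text{-mod} \cong \mathfrak{L}^b_F(A\text{-mod})$ follows by restriction: a left $\widehat{A}$-module $M$ is finitely generated if and only if each $M^n$ is finitely generated over $A$ and $M^n = 0$ for all but finitely many $n$, which is exactly what it means to lie in $\mathfrak{L}^b_F(A\text{-mod})$. Combined with the bounded analogue of Theorem~\ref{thm:graduate}, this produces all three isomorphisms in the second claim.

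The main technical subtlety is the careful handling of the non-unital nature of $\widehat{A}$: one must fix the convention that $\widehat{A}$-modules are \emph{unitary} (equivalently, $M = \bigoplus_n e_n M$), without which the decomposition into graded pieces in the first step fails to be bijective. Once this convention is fixed, the verifications are essentially bookkeeping, and the passage from $A$-mod to $A$-Mod requires no change: no step uses finite generation except for deducing bounded support in the final restriction.
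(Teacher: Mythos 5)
Your proposal is correct and follows essentially the same route as the paper: the paper derives this corollary by combining Theorem~\ref{thm:graduate} with the identification of $\widehat{A}$-modules with objects of $\mathfrak{L}_{DA\otimes_A-}$ (cited from \cite{Gir18} and carried out explicitly, for a general bimodule $B$ in place of $DA$, in Lemma~\ref{lem:PedroGerman} and Proposition~\ref{B[A]functors1} via the same idempotent decomposition $M=\bigoplus_n \widehat{e}_n M$ you use). Your explicit treatment of the non-unital issue and of the passage from $A$-mod to $A$-Mod is a welcome elaboration of details the paper leaves implicit, but the argument is the same.
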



				
\section{Some properties of \texorpdfstring{$\mathfrak{L}_F(\mathcal{C})$ and $\mathfrak{R}_F(\mathcal{C})$}{LF}}	\label{sec:propert}		
Motivated by the coincidence of $\mathfrak{L}_F(\mathcal{C})$ with cochain complexes when $F$ is the identity functor (c.f. Example \ref{example:L_F(C)}(i)), in this section we will delve into the study of some of its properties to exhibit both similarities and differences compared to cochain complexes in $\mathcal{C}$. Obviously, this study involves results with consequences over the categories $\mathfrak{L}^b_F(\mathcal{C})$,  $\mathfrak{R}_F(\mathcal{C})$ and $\mathfrak{R}^b_F(\mathcal{C})$. We introduce the following notation: For any category $\mathcal{C}$, we denote $\textup{End}(\mathcal{C})$ the category of endofunctors $F:\mathcal{C}\longrightarrow \mathcal{C}$.

From now on, in this paper we restrict our focus to the realm of additive categories and additive covariant functors.


\begin{theorem}
\label{thm:abelian}
Let $F:\mathcal{C}\longrightarrow \mathcal{C}$ be an endofunctor on the category $\mathcal{C}$. We establish the following results. 

\begin{enumerate}[(i)]

\item The categories $\mathfrak{L}_F(\mathcal{C})$, $\mathfrak{L}^b_F(\mathcal{C})$,  $\mathfrak{R}_F(\mathcal{C})$ and $\mathfrak{R}^b_F(\mathcal{C})$ are additive categories. 

\item If $\mathcal{C}$ has kernels, then $\mathfrak{L}_F(\mathcal{C})$ and $\mathfrak{L}^b_F(\mathcal{C})$ are also closed by kernels.


\item If $\mathcal{C}$ is an abelian category and $F$ preserves cokernels, then $\mathfrak{L}_F(\mathcal{C})$ and $\mathfrak{L}^b_F(\mathcal{C})$ are abelian categories.

\item If $\mathcal{C}$ has cokernels, then  $\mathfrak{R}_F(\mathcal{C})$ and $\mathfrak{R}^b_F(\mathcal{C})$ are also closed by cokernels.


\item If $\mathcal{C}$ is an abelian category and $F$ preserves kernels, then $\mathfrak{R}_F(\mathcal{C})$ and $\mathfrak{R}^b_F(\mathcal{C})$ are abelian categories.

\item If $\mathcal{C}$ is a $\Bbbk$-category, where $\Bbbk$ is a commutative ring with unity, then  $\mathfrak{L}_F(\mathcal{C})$, $\mathfrak{L}^b_F(\mathcal{C})$,  $\mathfrak{R}_F(\mathcal{C})$ and $\mathfrak{R}^b_F(\mathcal{C})$ are $\Bbbk$-categories.
\end{enumerate}
\end{theorem}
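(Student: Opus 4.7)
My overall strategy is to perform every relevant construction (zero object, biproducts, kernels, cokernels) componentwise in $\mathcal{C}$ and then transport the structural differentials through the appropriate universal property, using the blanket hypothesis that $F$ is an additive covariant functor together with the extra hypotheses stated in parts (iii) and (v). Since additive functors preserve the zero object and finite biproducts, part (i) reduces to setting $0 := (0,0)$ and $(M \oplus N)^n := M^n \oplus N^n$ with $d^n_{M \oplus N} := d^n_M \oplus d^n_N$ under the canonical isomorphism $F(M^n \oplus N^n) \cong F(M^n) \oplus F(N^n)$; the cocycle condition holds componentwise, the $\mathrm{Hom}$'s acquire an abelian group structure componentwise, and bilinearity of composition follows from additivity of $F$. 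All of this restricts to $\mathfrak{L}^b_F(\mathcal{C})$ and $\mathfrak{R}^b_F(\mathcal{C})$ because biproducts of complexes with almost-all-zero components still have almost-all-zero components.

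For (ii), given $\varphi: M \to N$, I would set $K^n := \ker(\varphi^n)$ with canonical inclusion $\iota^n: K^n \to M^n$. The identity
\[
\varphi^{n+1} \circ d^n_M \circ F(\iota^n) \;=\; d^n_N \circ F(\varphi^n \iota^n) \;=\; 0
\]
shows that $d^n_M \circ F(\iota^n)$ factors uniquely through $\iota^{n+1}$, yielding the required $d^n_K: F(K^n) \to K^{n+1}$. The cocycle identity $d^{n+1}_K \circ F(d^n_K) = 0$ follows by post-composing with the monic $\iota^{n+2}$ and using $d^{n+1}_M F(d^n_M) = 0$. The universal property of $(K, d_K)$ as kernel in $\mathfrak{L}_F(\mathcal{C})$ is then checked componentwise. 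Part (iv) is entirely dual: for $\mathfrak{R}_F(\mathcal{C})$ the differentials have shape $M^n \to F(M^{n+1})$, so componentwise cokernels transport without any further hypothesis on $F$.

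For (iii), I additionally need cokernels, and it is precisely here that the hypothesis ``$F$ preserves cokernels'' is essential: writing $C^n := \mathrm{coker}(\varphi^n)$ with projection $\pi^n$, the isomorphism $F(C^n) \cong \mathrm{coker}(F(\varphi^n))$ allows the composite $\pi^{n+1} d^n_N: F(N^n) \to C^{n+1}$ to descend uniquely to $d^n_C: F(C^n) \to C^{n+1}$. To conclude abelianity, I would first verify that monomorphisms and epimorphisms in $\mathfrak{L}_F(\mathcal{C})$ are exactly the componentwise monics and epics; the remaining abelian axioms then transfer directly from those of $\mathcal{C}$. Part (v) is the dual statement, requiring $F$ to preserve kernels. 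Finally, (vi) is routine, since $\mathrm{Hom}_{\mathfrak{L}_F(\mathcal{C})}(M, N)$ embeds as a $\Bbbk$-submodule of $\prod_n \mathrm{Hom}_\mathcal{C}(M^n, N^n)$ with composition bilinear componentwise.

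The main obstacle I anticipate is the verification of the two abelian axioms ``every monic is the kernel of its cokernel'' and ``every epic is the cokernel of its kernel''. Although these should reduce to the corresponding axioms in $\mathcal{C}$ once componentwise detection of monics and epics is established, the reduction requires a careful bookkeeping of the induced differentials on the kernel and cokernel complexes; it is at this point that the preservation hypothesis on $F$ (of cokernels for (iii), of kernels for (v)) cannot be relaxed, because without it the candidate cokernel (resp. kernel) of a morphism in $\mathfrak{L}_F(\mathcal{C})$ (resp. $\mathfrak{R}_F(\mathcal{C})$) would not even carry a well-defined structure morphism, and the entire argument would collapse.
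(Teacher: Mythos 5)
Your proposal is correct and follows essentially the same route as the paper: all constructions (biproducts, kernels, cokernels) are performed componentwise in $\mathcal{C}$, the differentials are induced via the relevant universal properties, and the hypotheses that $F$ preserves cokernels (for $\mathfrak{L}_F(\mathcal{C})$) resp.\ kernels (for $\mathfrak{R}_F(\mathcal{C})$) are invoked exactly where the paper invokes them, namely to give the candidate cokernel (resp.\ kernel) a well-defined structure morphism. The only cosmetic difference is in the final abelianity step, where you characterize monics and epics componentwise and transfer the axioms, while the paper directly exhibits the factorization $K\rightarrow M\rightarrow I\rightarrow N\rightarrow C$ identifying image with coimage; these amount to the same componentwise verification.
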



\begin{proof}
\noindent

\begin{enumerate}[(i)]
\item The proof follows the same line of reasoning as for cochain complexes.
\item Like cochain complexes, the kernel of a morphism $\varphi:M\longrightarrow N$ in $\mathfrak{L}_F(\mathcal{C})$ is defined by a pair $\left(K,\iota\right)$, where $\iota=(\iota^n)_{n\in\mathbb{Z}}$ is determined by the canonical inclusions $\iota^n:\ker(\varphi^n)\longrightarrow  M^n$ and $K=\left(\ker(\varphi^n),d_{K}^n\right)_{n\in\mathbb{Z}}$ is defined by the unique morphism $d_{K}^n:F(\ker(\varphi^n))\longrightarrow \ker(\varphi^{n+1})$ such that
    \begin{equation}
    \label{eq:iotaker}
    d_M^{n}F(\iota^n)=\iota^{n+1}d_{K}^n.
    \end{equation}
The existence of these unique $d_{K}^n$ is guaranteed by the universal property of kernels. To see this, first observe that $\varphi^{n+1}d_M^{n}F(\iota^n)=d_N^nF(\varphi^n)F(\iota^n)=d_N^nF(\varphi^n\iota^n)=0$. Since $\iota^{n+1}$ is a monomorphism and $\iota^{n+1}d_K^nF(d_K^{n-1})=d_M^nF(\iota^n)F(d_K^{n-1})=d_M^nF(d_M^{n-1})F(\iota^{n-1})=0$, we obtain that $d_K^nF(d_K^{n-1})=0$. Now, we will prove the universal property of kernel $\left(K,\iota\right)$. For this, we consider another morphism $\eta:L\longrightarrow M$ in $\mathfrak{L}_F(\mathcal{C})$ such that $\varphi\eta=0$. This implies that $\eta^n:L^n\longrightarrow M^n$ is a morphism in $\mathcal{C}$ such that $\varphi^n\eta^n=0$ and     
    \begin{equation}
    \label{eq:etaPUker}
    \eta^{n+1}d_L^n=d_M^nF(\eta^n) \ \ \ \text{for all $n\in\mathbb{Z}$,}
    \end{equation} 
from universal property of kernel in $\mathcal{C}$, for each $n\in\mathbb{Z}$, there exists a unique $\omega^n:L^n\longrightarrow\ker(\varphi^n)$ such that
    \begin{equation}
    \label{eq:etaker}
    \eta^n=\iota^n\omega^n.
    \end{equation} 
Consider the family $\omega=\left(\omega^n\right)_{n\in\mathbb{Z}}$. To see that $\omega\in\textup{Hom}_{\mathfrak{L}_F(\mathcal{C})}(L,K)$, we need to show that 
    $$
    \omega^{n+1}d_L^n=d_K^nF(\omega^n),\ \ \ \text{for all $n\in\mathbb{Z}$}.
    $$
Indeed, by equations (\ref{eq:iotaker}), (\ref{eq:etaPUker}) and (\ref{eq:etaker}) we have $\iota^{n+1}d_K^nF(\omega^n)=d_M^{n}F(\iota^n)F(\omega^n)=d_M^{n}F(\eta^n)=\eta^{n+1}d_L^n=\iota^{n+1}\omega^{n+1}d_L^n$, and due to the fact that $\iota^{n+1}$ is a monomorphism, we conclude that $d_K^nF(\omega^n)=\omega^{n+1}d_L^n$, which completes the proof of item $(ii)$.

\item From item $(ii)$ we know that $\mathfrak{L}_F(\mathcal{C})$ and $\mathfrak{L}^b_F(\mathcal{C})$ have kernels. Now, for any morphism $\varphi:M\longrightarrow N$ in $\mathfrak{L}_F(\mathcal{C})$ (resp. in $\mathfrak{L}^b_F(\mathcal{C})$) its cokernel  is $\left(\textup{coker}(\varphi),\pi\right)$, where $\pi=(\pi^n)_{n\in\mathbb{Z}}$ is defined by the canonical projections $\pi^n:N^n\longrightarrow \textup{coker}(\varphi^n)$ and $\textup{coker}(\varphi)=\left(\textup{coker}(\varphi^n),d_{\textup{coker}(\varphi)}^n\right)_{n\in\mathbb{Z}}$. Here, since $F$ commutes with cokernels, the morphism $d_{\textup{coker}(\varphi)}^n:F(\textup{coker}(\varphi^n))=\textup{coker}(F(\varphi^n))\longrightarrow \textup{coker}(\varphi^{n+1})$ corresponds to (the unique) morphism with the property $\pi^{n+1}d_N^{n}=d_{\textup{coker}(\varphi)}^nF(\pi^{n})$. The existence of these $d_{\textup{coker}(\varphi)}^n$ is guaranteed by universal property of cokernel of $F(\varphi^{n})$, since $\pi^{n+1}d_N^{n}F(\varphi^{n})=\pi^{n+1}\varphi^{n+1}d_M^n=0$. Thus, $\mathfrak{L}_F(\mathcal{C})$ and $\mathfrak{L}^b_F(\mathcal{C})$ are also closed by cokernels. Finally, following similar lines of reasoning ``by components'' in the two previous proofs , it is easy to conclude that for every morphism $\varphi:M\longrightarrow N$ in $\mathfrak{L}_F(\mathcal{C})$ (resp. in $\mathfrak{L}^b_F(\mathcal{C})$) there exists a sequence $K\stackrel{\iota}{\longrightarrow} M\stackrel{i}{\longrightarrow} I\stackrel{j}{\longrightarrow} N\stackrel{\pi}{\longrightarrow} C$ satisfying: $ji=\varphi$; $(K,\iota)=\ker(\varphi)$ and $(C,\pi)=\textup{coker}(\varphi)$; $(I,i)=\textup{coker}(\iota)$ and $(I,j)=\ker(\pi)$, which completes the proof of item $(iii)$.

\item The proof is analogous to the proof of item $(ii)$, relying on the cokernel construction for every morphism $\varphi:M\longrightarrow N$ in $\mathfrak{R}_F(\mathcal{C})$ (resp. in $\mathfrak{R}^b_F(\mathcal{C})$). That is, $\left(\textup{coker}(\varphi),\pi\right)$, where $\pi=(\pi^n)_{n\in\mathbb{Z}}$ is defined from of the canonical projections $\pi^n:N^n\longrightarrow \textup{coker}(\varphi^n)$ and $\textup{coker}(\varphi)=\left(\textup{coker}(\varphi^n),d_{\textup{coker}(\varphi)}^n\right)_{n\in\mathbb{Z}}$, where $d_{\textup{coker}(\varphi)}^n:\textup{coker}(\varphi^n)\longrightarrow F(\textup{coker}(\varphi^{n+1}))$ the unique morphism with the property  $F(\pi^{n+1})d_N^{n}=d_{\textup{coker}(\varphi)}^n\pi^{n}$. Indeed, since $F(\pi^{n+1})d_N^{n}\varphi^{n}=F(\pi^{n+1})F(\varphi^{n+1})d_M^n=F(\pi^{n+1}\varphi^{n+1})d_M^n=0$, $d_{\textup{coker}(\varphi)}^n$ is obtained by universal property of the kernels.

\item The proof is omitted due to its similarity to the item $(iii)$.

\item We will prove that for every $M=(M^n,d_M^n)_{n\in\mathbb{Z}}$, $N=(N^n,d_M^n)_{n\in\mathbb{Z}}$ in $\mathfrak{L}_F(\mathcal{C})$, $\lambda\in \Bbbk$ and morphisms $\varphi=(\varphi_n)_{n\in\mathbb{Z}},\,\psi=(\psi_n)_{n\in\mathbb{Z}}\in \textup{Hom}_{\mathfrak{L}_F(\mathcal{C})}(M,N)$ we obtain that $\varphi+\lambda\psi\in \textup{Hom}_{\mathfrak{L}_F(\mathcal{C})}(M,N)$. The claim is a direct consequence from the following commutative diagrams:
\begin{figure}[!htb]
\begin{center}
\begin{minipage}{0.98\textwidth}
\xymatrix{
& F(M^n) \ar[r]^-{d_M^n}\ar[d]_-{F(\varphi^n)}& M^{n+1} \ar[d]^-{\varphi^{n+1}} & \\
& F(N^{n})\ar[r]_-{d_N^n}& N^{n+1}& 
}
\end{minipage}
\begin{minipage}{0.98\textwidth}
\xymatrix{
& F(M^n) \ar[r]^-{d_M^n}\ar[d]_-{\lambda F(\psi^n)}& M_{n+1} \ar[d]^-{\lambda\psi^{n+1}} & \\
& F(N^{n})\ar[r]_-{d_N^n}& N^{n+1}& 
}
\end{minipage}
\end{center}
\end{figure}
\begin{equation*}
\xymatrix{
& F(M^n) \ar[r]^-{d_M^n}\ar[d]_-{F(\varphi^n+\lambda\psi^n)}& M^{n+1} \ar[d]^-{\varphi^{n+1}+\lambda\psi^{n+1}} & \\
& F(N^{n})\ar[r]_-{d_N^n}& N^{n+1}& 
}
\end{equation*}
\end{enumerate}

\end{proof}


\begin{proposition}
\label{prop:funct}
Let $E:\mathcal{C}\longrightarrow\mathcal{D}$ be a functor. If $F\in\textup{End}(\mathcal{C})$ and $G\in \textup{End}(\mathcal{D})$ are endofunctors for which there exists a natural isomorphism $\mu:G\circ E\longrightarrow E\circ F$, then this transformation and the functor $E$ induce an additive covariant functor 
    $$
    \begin{array}{rccl}
    \widehat{E}:&\mathfrak{L}_F(\mathcal{C})&\longrightarrow & \mathfrak{L}_G(\mathcal{D})\\
    & M &\longmapsto & \widehat{E}(M):=\left(E(M^n), E_{\mu}(d_M^n)\right)_{n\in\mathbb{Z}}\\
    & \varphi &\longmapsto & \widehat{E}(\varphi):=\left(E(\varphi^n)\right)_{n\in\mathbb{Z}}
    \end{array}
    $$
where $E_{\mu}(d_M^n)$ corresponds to the composition $E(d_M^n)\circ\mu(M^n): G(E(M^n))\longrightarrow E(M^{n+1})$.
\end{proposition}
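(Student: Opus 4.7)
The plan is to verify three things, in order: first, that for every object $M\in\mathfrak{L}_F(\mathcal{C})$ the proposed image $\widehat{E}(M)=(E(M^n),E_\mu(d_M^n))_{n\in\mathbb{Z}}$ is actually a $G$-cochain complex in $\mathcal{D}$; second, that if $\varphi:M\to N$ is a morphism in $\mathfrak{L}_F(\mathcal{C})$ then the family $(E(\varphi^n))_{n\in\mathbb{Z}}$ is a morphism in $\mathfrak{L}_G(\mathcal{D})$; and third, that the assignment respects identities, compositions, and the additive structure on hom-sets. Both nontrivial steps reduce to diagram chases using the naturality of $\mu$, so I do not expect any genuine obstacle, only bookkeeping.

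For the object part, I would compute
\[
E_\mu(d_M^{n+1})\circ G(E_\mu(d_M^n)) = E(d_M^{n+1})\circ \mu(M^{n+1})\circ G(E(d_M^n))\circ G(\mu(M^n)).
\]
The crucial step is to apply the naturality square of $\mu:G\circ E\Rightarrow E\circ F$ to the morphism $d_M^n:F(M^n)\to M^{n+1}$, which yields $\mu(M^{n+1})\circ G(E(d_M^n))=E(F(d_M^n))\circ \mu(F(M^n))$. Plugging this in and using that $E$ is a functor, the expression collapses to $E\bigl(d_M^{n+1}\circ F(d_M^n)\bigr)\circ \mu(F(M^n))\circ G(\mu(M^n))$, which is zero because $d_M^{n+1}\circ F(d_M^n)=0$ by definition of $M\in\mathfrak{L}_F(\mathcal{C})$ and $E$ is additive.

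For the morphism part, starting from the $F$-commutativity $\varphi^{n+1}\circ d_M^n=d_N^n\circ F(\varphi^n)$, I apply $E$, precompose with $\mu(M^n)$, and use naturality of $\mu$ on $\varphi^n:M^n\to N^n$, i.e.\ $\mu(N^n)\circ G(E(\varphi^n))=E(F(\varphi^n))\circ \mu(M^n)$. A short rearrangement gives $E(\varphi^{n+1})\circ E_\mu(d_M^n)=E_\mu(d_N^n)\circ G(E(\varphi^n))$, which is exactly the $G$-commutativity demanded by $\mathfrak{L}_G(\mathcal{D})$. Finally, functoriality is immediate since composition and identities in $\mathfrak{L}_F(\mathcal{C})$ and $\mathfrak{L}_G(\mathcal{D})$ are defined componentwise and $E$ is a functor; additivity of $\widehat{E}$ on hom-sets follows from the standing hypothesis that $E$ is additive, applied componentwise as in Theorem~\ref{thm:abelian}(i). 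The only point requiring care is invoking the correct naturality square in the two different situations, but it does not constitute a real difficulty; the hypothesis that $\mu$ be an isomorphism is in fact not needed here, only that it be a natural transformation.
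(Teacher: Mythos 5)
Your proof is correct and follows essentially the same route as the paper's: both reduce the object condition and the morphism condition to the naturality square of $\mu$ applied to $d_M^n$ and to $\varphi^n$ respectively, then collapse via functoriality of $E$ and $d_M^{n+1}\circ F(d_M^n)=0$. Your closing observation that only naturality of $\mu$ (not invertibility) is used is accurate and consistent with the paper's argument, which likewise never invokes that $\mu$ is an isomorphism.
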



\begin{proof}
We need to show that:
\begin{itemize}
\item $E_{\mu}(d_M^{n+1})\circ G(E_{\mu}(d_M^n))=0$ for all $n\in\mathbb{Z}$,
\item For each morphism $E(\varphi^n):E(M^n)\longrightarrow E(N^n)$ satisfies $E(\varphi^{n+1})\circ E_{\mu}(d_M^n)=E_{\mu}(d_N^n)\circ G(E(\varphi^n))$, for all morphism  $\varphi:M\longrightarrow N$ in $\mathfrak{L}_F(\mathcal{C})$.
\end{itemize}
 The first claim it follows from the following sequence of equalities:
\begin{eqnarray*}
E_{\mu}\big(d_M^{n+1}\big)\circ G(E_{\mu}(d_M^n)) &=& E\big(d_M^{n+1}\big)\circ \mu\big(M^{n+1}\big)\circ G(E(d_M^n))\circ G(\mu(M^n))\\
&=& E\big(d_M^{n+1}\big)\circ \big(\mu(M^{n+1})\circ G(E(d_M^n))\big)\circ G(\mu(M^n))\\
&=& E(d_M^{n+1})\circ \big(E(F(d_M^n))\circ \mu(F(M^n))\big)\circ G(\mu(M^n))\\
&=& E\big(d_M^{n+1}\circ F(d_M^n)\big)\circ \mu(F(M^n))\circ G(\mu(M^n))\\
&=& E(0)\circ \mu(F(M^n))\circ G(\mu(M^n))\\
&=& 0
\end{eqnarray*}
The second claim holds from the following sequence of equalities:
\begin{eqnarray*}
E\big(\varphi^{n+1}\big)\circ E_{\mu}(d_M^n) &=& E\big(\varphi^{n+1}\big)\circ E(d_M^n)\circ\mu(M^n)\\
&=& E\big(\varphi^{n+1}\circ d_M^n\big)\circ\mu(M^n)\\
&=& E\big(d_N^n\circ F(\varphi^{n})\big)\circ\mu(M^n)\\
&=& E(d_N^n)\circ E(F(\varphi^n))\circ \mu(M^n)\\
&=& E(d_N^n)\circ \mu(N^n)\circ G(E(\varphi^n))\\
&=& E_{\mu}(d_N^n)\circ G(E(\varphi^n)).
\end{eqnarray*}
\end{proof}


\begin{corollary}\label{cor:Fext}
Every endofunctor $F:\mathcal{C}\longrightarrow\mathcal{C}$ can be extended to a covariant endofunctor     $$
    \begin{array}{rccl}
    \widehat{F}:&\mathfrak{L}_F(\mathcal{C})&\longrightarrow & \mathfrak{L}_F(\mathcal{C})\\
    & M &\longmapsto & \widehat{F}(M):=\left(F(M^n), F(d_M^n)\right)_{n\in\mathbb{Z}}\\
    & \varphi &\longmapsto & \widehat{F}(\varphi):=\left(F(\varphi^n)\right)_{n\in\mathbb{Z}}
    \end{array}
    $$
\end{corollary}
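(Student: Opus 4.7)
The plan is to derive this as an immediate application of Proposition \ref{prop:funct}. That proposition produces a functor $\widehat{E}:\mathfrak{L}_F(\mathcal{C})\longrightarrow\mathfrak{L}_G(\mathcal{D})$ from the data of a functor $E:\mathcal{C}\to\mathcal{D}$ together with a natural isomorphism $\mu:G\circ E\longrightarrow E\circ F$. To land inside $\mathfrak{L}_F(\mathcal{C})$ itself, the natural choice is to specialize to $\mathcal{D}=\mathcal{C}$, $E=F$, and $G=F$.

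With these choices, the required datum is a natural isomorphism $\mu:F\circ F\longrightarrow F\circ F$, and I would simply take $\mu=\textup{id}_{F^2}$, the identity natural transformation on $F^2$. This is trivially a natural isomorphism, so the hypothesis of Proposition \ref{prop:funct} is automatic and there is nothing to verify regarding compatibility. In particular, no assumption on $F$ (beyond being additive, which is in force throughout the section) is needed.

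It then suffices to observe that the formulas produced by Proposition \ref{prop:funct} collapse to the ones in the statement of the corollary. Indeed, with $\mu=\textup{id}_{F^2}$, the composite $E_\mu(d_M^n)=E(d_M^n)\circ\mu(M^n)$ becomes $F(d_M^n)\circ\textup{id}_{F^2(M^n)}=F(d_M^n)$, so that $\widehat{F}(M)=(F(M^n),F(d_M^n))_{n\in\mathbb{Z}}$ and $\widehat{F}(\varphi)=(F(\varphi^n))_{n\in\mathbb{Z}}$, matching the claimed description. Since the corollary is purely a specialization, there is no genuine obstacle; the only thing worth pointing out is that this construction implicitly depends on having fixed $F$ on both sides, so that $\widehat{F}$ really is an \emph{endo}functor of $\mathfrak{L}_F(\mathcal{C})$ rather than a functor into some different category $\mathfrak{L}_{G}(\mathcal{C})$.
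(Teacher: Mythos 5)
Your proposal is correct and matches the paper's own proof exactly: the authors likewise specialize Proposition \ref{prop:funct} to $E=F=G$, $\mathcal{C}=\mathcal{D}$, with $\mu$ the identity natural transformation. Your additional check that $E_\mu(d_M^n)$ collapses to $F(d_M^n)$ is a welcome (if routine) verification that the paper leaves implicit.
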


\begin{proof}
It is sufficient to take $E=F=G$, $\mathcal{C}=\mathcal{D}$ and by the natural isomorphism $\mu$ the identity in Proposition \ref{prop:funct}.

\end{proof}


\begin{corollary}
\label{cor:Equiv}
Let $E:\mathcal{C}\longrightarrow\mathcal{D}$ be an equivalence of categories, and let $F\in\textup{End}(\mathcal{C})$ be an endofunctor. Then, there exists a unique, up to isomorphism, additive covariant endofunctor $G\in \textup{End}(\mathcal{D})$  such that the induced covariant functor $\widehat{E}:\mathfrak{L}_F(\mathcal{C})\longrightarrow \mathfrak{L}_G(\mathcal{D})$ is an equivalence of categories.
\end{corollary}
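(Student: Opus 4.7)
The plan is to build $G$ explicitly from a quasi-inverse of $E$, use Proposition \ref{prop:funct} to produce $\widehat{E}$, and obtain its quasi-inverse by the same mechanism applied to the quasi-inverse of $E$. So let $E':\mathcal{D}\longrightarrow\mathcal{C}$ be a quasi-inverse with natural isomorphisms $\eta:1_{\mathcal{C}}\longrightarrow E'E$ and $\epsilon:EE'\longrightarrow 1_{\mathcal{D}}$, and define
$$G:=E\circ F\circ E'\in\textup{End}(\mathcal{D}).$$
The natural isomorphism required to apply Proposition \ref{prop:funct} is $\mu:=EF(\eta^{-1}):G\circ E=EFE'E\longrightarrow EF$. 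Thus Proposition \ref{prop:funct} yields an additive functor $\widehat{E}:\mathfrak{L}_F(\mathcal{C})\longrightarrow\mathfrak{L}_G(\mathcal{D})$.

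Next I would construct the quasi-inverse. The analogue of Proposition \ref{prop:funct} for $E':\mathcal{D}\longrightarrow\mathcal{C}$ requires a natural isomorphism $F\circ E'\longrightarrow E'\circ G$; this is provided by $\nu:=\eta(FE'):FE'\longrightarrow E'EFE'=E'G$. This produces an additive functor $\widehat{E'}:\mathfrak{L}_G(\mathcal{D})\longrightarrow\mathfrak{L}_F(\mathcal{C})$. I would then verify that $\widehat{E'}\widehat{E}\cong 1_{\mathfrak{L}_F(\mathcal{C})}$ and $\widehat{E}\widehat{E'}\cong 1_{\mathfrak{L}_G(\mathcal{D})}$ by defining the natural isomorphisms componentwise via $\eta$ and $\epsilon$. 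Concretely, for $M=(M^n,d_M^n)\in\mathfrak{L}_F(\mathcal{C})$ the family $\bigl(\eta(M^n)\bigr)_{n\in\mathbb{Z}}$ should give an isomorphism $M\longrightarrow \widehat{E'}\widehat{E}(M)$, and one must check that each square
$$
\xymatrix{
F(M^n)\ar[r]^-{d_M^n}\ar[d]_-{F(\eta(M^n))} & M^{n+1}\ar[d]^-{\eta(M^{n+1})}\\
F(E'E(M^n))\ar[r] & E'E(M^{n+1})
}
$$
commutes, where the bottom arrow is the differential of $\widehat{E'}\widehat{E}(M)$ obtained by successive application of the formulas $E_\mu$ and $(E')_\nu$. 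This is the main obstacle: one must unpack the double composition $E'_{\nu}\bigl(E_\mu(d_M^n)\bigr)$ and simplify it using the triangle identities relating $\eta$ and $\epsilon$ before naturality of $\eta$ closes the square. The corresponding check for $\widehat{E}\widehat{E'}$ uses $\epsilon$ analogously.

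Finally, for uniqueness, suppose $G'\in\textup{End}(\mathcal{D})$ is another endofunctor for which the construction of Proposition \ref{prop:funct} produces an equivalence $\widehat{E}:\mathfrak{L}_F(\mathcal{C})\longrightarrow\mathfrak{L}_{G'}(\mathcal{D})$. Its definition presupposes a natural isomorphism $\mu':G'\circ E\longrightarrow E\circ F$, and combined with the $\mu$ above one gets $G\circ E\cong E\circ F\cong G'\circ E$. Whiskering on the right with $E'$ and applying $\epsilon$ yields $G\cong G\circ E\circ E'\cong G'\circ E\circ E'\cong G'$, establishing the uniqueness up to natural isomorphism. Since each step produces additive functors out of additive data, additivity of $G$ is automatic.
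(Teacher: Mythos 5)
Your proof is correct and follows essentially the same route as the paper's: set $G=E\circ F\circ E'$ for a quasi-inverse $E'$, apply Proposition~\ref{prop:funct} in both directions to get $\widehat{E}$ and $\widehat{E'}$, and check that the two composites are naturally isomorphic to the identity functors. You go beyond the paper in two useful respects: you make the natural isomorphisms $\mu$ and $\nu$ explicit and sketch the componentwise verification that the paper dismisses as \emph{easy to check} (with your choices, naturality of $\eta$ alone already closes the square, so the triangle identities are not even needed there), and you supply the uniqueness argument --- whiskering $G\circ E\cong E\circ F\cong G'\circ E$ with $E'$ --- which the paper's proof omits entirely even though the statement asserts uniqueness.
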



\begin{proof}
Consider the endofunctor $G:= E\circ F\circ H\in \textup{End}(\mathcal{D})$, where $H:\mathcal{D}\longrightarrow\mathcal{C}$ is the pseudo-inverse of the equivalence $E$. Since $E\circ F\simeq G\circ E$, we obtain that there exist natural isomorphisms $\mu:G\circ E\longrightarrow E\circ F$ and $\beta:F\circ H\longrightarrow H\circ G$. From Proposition \ref{prop:funct} we obtain two covariant additive functors 
    $$
    \begin{array}{cclcccccccl}
    \widehat{E}:\mathfrak{L}_F(\mathcal{C})&\longrightarrow & \mathfrak{L}_G(\mathcal{D})&&&&&&\widehat{H}:\mathfrak{L}_G(\mathcal{D})&\longrightarrow & \mathfrak{L}_F(\mathcal{C})\\
    \ \ \ M &\longmapsto & \left(E(M^n), E_{\mu}(d^n_M)\right)_{n\in\mathbb{Z}}&&&&&& \ \ \ N &\longmapsto & \left(H(N^n), H_{\beta}(d^n_N)\right)_{n\in\mathbb{Z}}\\
    \ \ \ \varphi &\longmapsto & \left(E(\varphi^n)\right)_{n\in\mathbb{Z}}&&&&&& \ \ \ \psi &\longmapsto & \left(H(\psi^n)\right)_{n\in\mathbb{Z}}
    \end{array}
    $$
It is easy to check that $\widehat{H}\circ\widehat{E}\simeq 1_{\mathfrak{L}_F(\mathcal{C})}$ and $\widehat{E}\circ\widehat{H}\simeq 1_{\mathfrak{L}_G(\mathcal{D})}$, which completes the proof.

\end{proof}


\begin{example}\rm
Let $f:B\longrightarrow A$ be an isomorphism of $\Bbbk$-algebras, with $g:A\longrightarrow B$ its inverse. The morphism $f$ induces a natural covariant $\Bbbk$-linear functor $E:A\textup{-Mod}\longrightarrow B\text{-Mod}$ defined over objects $M\longmapsto {}_{f}M$, where ${}_{f}M$ indicates the $B$-module structure induced by $f$ on $M$. Likewise, $g$ induces a $\Bbbk$-linear functor $H:B\text{-Mod}\longrightarrow A\text{-Mod}$ which is the inverse of $F_f$, that is, $E\circ H=1_{B\text{-Mod}}$ and $H\circ E=1_{A\text{-Mod}}$. In conclusion, $E$ is an isomorphism, {\it a fortiori}, an equivalence. More details in \cite[Lemma 6.3]{SY}.

Now, we consider for any $A$-bimodule $D$ the endofunctor $F:A\textup{-Mod}\longrightarrow A\textup{-Mod}$, defined by $M\longmapsto D\otimes M$. Following the ideas in Corollary \ref{cor:Equiv}, we obtain an endofunctor $G:B\textup{-Mod}\longrightarrow B\textup{-Mod}$ defined by $N\longmapsto {}_{f}D\otimes {}_{g}N$ and the equivalences $\widehat{E}$ and $\widehat{H}$ are determined by
$$
    \begin{array}{cclcccccccl}
    \widehat{E}:\mathfrak{L}_F(A\textup{-Mod})&\longrightarrow & \mathfrak{L}_G(B\textup{-Mod}) &&&&&&\widehat{H}:\mathfrak{L}_G(B\textup{-Mod})&\longrightarrow & \mathfrak{L}_F(A\textup{-Mod})\\
    \ \ \ M &\longmapsto & \left({}_{f}M^n, d^n_{{}_{f}M}\right)_{n\in\mathbb{Z}}&&&&&& \ \ \ N &\longmapsto & \left({}_{g}N^n, d^n_{{}_{g}N})\right)_{n\in\mathbb{Z}}\\
    \ \ \ \varphi &\longmapsto & \left(\varphi^n\right)_{n\in\mathbb{Z}}&&&&&& \ \ \ \psi &\longmapsto & \left(\psi^n\right)_{n\in\mathbb{Z}}
    \end{array}
    $$
since, in this case, $\mu$ and $\beta$ are the identities. For sake of completeness, observe that any $M=(M^n,d^n_{M})\in \mathfrak{L}_F(A\textup{-Mod})$ which correspond a $M^n$ $A$-module (not necessarily finitely generated) and $A-$module homomorphism $d^n_{M}:D\otimes M^n\longrightarrow M^{n+1}$ are mapped, respectively, to ${}_{f}M^n$ and $d^n_{{}_{f}M}:{}_{f}D\otimes M^n\longrightarrow {}_{f}M^{n+1}$. Here, ${}_{f}D\otimes M^n$ is the result of applying ${}_{f}D\otimes M^n={}_{f}D\otimes {}_{gf}M^n$ and using that $g:A\longrightarrow B$ is the inverse of $f$.
\end{example}



\subsection{(Co-)Completeness}

Let $\mathcal{C}$ be a complete category, and let $F:\mathcal{C}\longrightarrow\mathcal{C}$ be an endofunctor which preserves direct (resp. inverse) limits. Sometimes, $F$ is called \emph{continuous (resp. cocontinuos) functor}. The main aim of this subsection is to prove that $\mathfrak{L}_F(\mathcal{C})$ is an additive (locally small) complete category. To this end, we recall the \cite[Theorem 2.8.1, p. 60]{Bor94} which states that: \emph{A category is complete precisely when each family of objects has a product and each pair of parallel arrows has an equalizer}. These conditions will be part of the content of the following proposition.


\begin{theorem}
\label{thm:prod and eq}
Let $\mathcal{C}$ be a complete (resp. co-complete) category, and let  $F:\mathcal{C}\longrightarrow\mathcal{C}$ be an endofunctor. Then, 
\begin{enumerate}[(i)]
\item For any morphisms $\varphi,\psi:M\longrightarrow N$ in $\mathfrak{L}_F(\mathcal{C})$, there exists the equalizer (resp. co-equalizer) $\kappa:K\longrightarrow M$ (resp. $\lambda:N\rightarrow L$) in $\mathfrak{L}_F(\mathcal{C})$, where $\kappa^n:K^n\longrightarrow M^n$ (resp. $\lambda^n:N^n\longrightarrow L^n$) is the equalizer (resp. co-equalizer) of the morphisms $\varphi^n,\psi^n:M^n\longrightarrow N^n$ in $\mathcal{C}$ for each $n\in\mathbb{Z}$.
\item If the endofuntor $F$ preserves products (resp. co-products), then the product (resp. co-product) in $\mathfrak{L}_F(\mathcal{C})$ of a family of objects $\left(M_i=\left(M_{i}^{n},d^n_{M_{i}}\right)_{n\in\mathbb{Z}}\right)_{i\in I}$, indexed by the set $I$, is $\prod\limits_{i\in I} M_{i}=\left(\prod\limits_{i\in I} M_{i}^{n},\prod\limits_{i\in I}d^n_{M_{i}}\right)_{n\in\mathbb{Z}}$ (resp. $\coprod\limits_{i\in I} M_{i}=\left(\coprod\limits_{i\in I} M_{i}^{n},\coprod\limits_{i\in I}d^n_{M_{i}}\right)_{n\in\mathbb{Z}}$).
\end{enumerate}

In particular, if $\mathcal{C}$ is a complete (resp. co-complete) category and $F\in \textup{End}(\mathcal{C})$ is a continuous (resp.co-continuous) functor, then  $\mathfrak{L}_F(\mathcal{C})$ is an additive complete (resp. co-complete) category.
\end{theorem}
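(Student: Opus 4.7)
The plan is to verify the Borceux criterion \cite[Theorem 2.8.1]{Bor94} directly: establishing parts (i) and (ii) simultaneously furnishes all equalizers (resp. coequalizers) and all products (resp. coproducts) in $\mathfrak{L}_F(\mathcal{C})$, and the additivity is already known from Theorem \ref{thm:abelian}(i). The common strategy is to construct the candidate (co-)limit componentwise in $\mathcal{C}$ and then produce the missing differential $d^n$ on it via the appropriate universal property, finally checking the relation $d^{n+1}\circ F(d^n)=0$ and the universal property in $\mathfrak{L}_F(\mathcal{C})$ degree by degree.

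For the equalizer case of (i), given $\varphi,\psi\colon M\to N$, take the componentwise equalizers $\kappa^n\colon K^n\to M^n$ in $\mathcal{C}$. Since $\varphi^n\kappa^n=\psi^n\kappa^n$ and $\varphi,\psi$ are morphisms of $F$-cochain complexes, one computes
\[
\varphi^{n+1}\, d_M^n\, F(\kappa^n)=d_N^n\, F(\varphi^n\kappa^n)=d_N^n\, F(\psi^n\kappa^n)=\psi^{n+1}\, d_M^n\, F(\kappa^n),
\]
so $d_M^n F(\kappa^n)$ factors uniquely through $\kappa^{n+1}$, yielding $d_K^n\colon F(K^n)\to K^{n+1}$. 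The relation $d_K^{n+1}\circ F(d_K^n)=0$ is then forced by $\kappa^{n+2}$ being monic together with $\kappa^{n+2}\, d_K^{n+1}\, F(d_K^n)=d_M^{n+1}\, F(d_M^n)\, F(\kappa^n)=0$. The universal property of $(K,\kappa)$ in $\mathfrak{L}_F(\mathcal{C})$ then reduces to the pointwise universal property of the $\kappa^n$ (mirroring exactly the kernel argument in Theorem \ref{thm:abelian}(ii)), with compatibility of the unique components with the differentials re-established by monicity of $\kappa^{n+1}$.

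For part (ii), the hypothesis that $F$ preserves products supplies a canonical identification $F\bigl(\prod_i M_i^n\bigr)\cong\prod_i F(M_i^n)$, under which the family $(d_{M_i}^n)_i$ induces a differential on $\bigl(\prod_i M_i^n\bigr)_{n\in\mathbb{Z}}$. The relation $\prod_i d_{M_i}^{n+1}\circ F\bigl(\prod_i d_{M_i}^n\bigr)=0$ is inherited coordinate-wise from each $M_i$, and the universal property of $\prod_i M_i$ in $\mathfrak{L}_F(\mathcal{C})$ descends coordinate-wise from the universal property of the product in $\mathcal{C}$, compatibility with differentials being forced by uniqueness of the universal factorization through $\prod_i F(M_i^n)$. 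The dual constructions for coequalizer and coproduct then yield cocompleteness, and the Borceux criterion delivers the concluding "in particular" clause.

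The main subtlety, and what I expect to require the most care, is the dual argument for coequalizers: to obtain the differential $d_L^n\colon F(L^n)\to L^{n+1}$ from $\lambda^{n+1}\, d_N^n\colon F(N^n)\to L^{n+1}$, one needs $F(\lambda^n)$ to still enjoy the coequalizing universal property of $\lambda^n$, i.e., $F$ must preserve the coequalizer $\lambda^n$. This is automatic under the "in particular" hypothesis that $F$ is cocontinuous, but it is worth flagging explicitly so that the passage from (i) and (ii) to the final cocompleteness statement via Borceux is transparent; once this identification is in place, the remaining verifications (vanishing of $d_L^{n+1}\circ F(d_L^n)$ and the universal property of $(L,\lambda)$) are the formal duals of the equalizer arguments and proceed by the same pattern, now with $F(\lambda^n)$ epic in place of $\kappa^{n+1}$ monic.
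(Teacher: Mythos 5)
Your argument is correct and follows essentially the same route as the paper's proof: componentwise equalizers and products in $\mathcal{C}$, the differential induced by the relevant universal property (using $F\bigl(\prod_i M_i^n\bigr)\cong\prod_i F(M_i^n)$ for products), the relation $d^{n+1}\circ F(d^n)=0$ checked via monicity or coordinatewise, and Borceux's criterion for the concluding clause. Your final flag is moreover sharper than the paper, which dismisses the dual case as ``completely analogous'': producing $d_L^n$ out of $F(L^n)$ genuinely requires $F$ to send the coequalizer $\lambda^n$ to a coequalizer (or at least a suitably epic map), a hypothesis absent from the bare statement of part (i) and only supplied by cocontinuity in the ``in particular'' clause --- exactly mirroring how Theorem~\ref{thm:abelian}(iii) must assume $F$ preserves cokernels while the kernel construction in Theorem~\ref{thm:abelian}(ii) needs no hypothesis on $F$.
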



\begin{proof} 
We presents the proof in the case to be $\mathcal{C}$ a complete category and $F$ a preserving product functor, since the proof for the case when $\mathcal{C}$ co-complete and $F$ preserving co-product functor is completely analogous. In the former case, the proof shares the spirit of cochain complex proofs, but requires careful attention in the following aspects: 
\begin{enumerate}
\item[(i)] For this item, the equalizer $\mathfrak{L}_F(\mathcal{C})$ is the morphism $\kappa:=\left(\kappa^n\right)_{n\in\mathbb{Z}}:K\longrightarrow M$, where  $K:=\left(K^n,d_K^n\right)_{n\in\mathbb{Z}}$ and, for each $n\in\mathbb{Z}$, the morphism $\kappa^n: K^n\longrightarrow M^n$ is the equalizer in $\mathcal{C}$. Here, every $d^{n}_K:F(K^n)\longrightarrow K^{n+1}$ is the unique morphism in $\mathcal{C}$ such that $\kappa^{n+1}d^{n}_K=d^n_M F(\kappa^n)$, which there exists by universal property for equalizers in $\mathcal{C}$.

\item[(ii)] In this case, the product corresponds to the object $M_*:=\left(M_{*}^{n},d^n_{M_{*}}\right)_{n\in\mathbb{Z}}$ endowed with the morphisms $\pi_i:=(\pi^n_{i})_{n\in\mathbb{Z}}:M_*\longrightarrow M_j$, where $i\in I$. Here, we denote $M_{*}^{n}:=\prod\limits_{i\in I} M_{i}^n$ and the morphism $\pi_{i}^n:M_{*}^{n}\longrightarrow M_{i}^{n}$ represents the $i-$th projection. Besides, the collection of morphisms, for each $n\in\mathbb{Z}$, $d^n_{M_{*}}:\prod\limits_{i\in I} F(M_{i}^{n})\longrightarrow \prod\limits_{i\in I} M_{i}^{n+1}$ is the unique morphism in $\mathcal{C}$, such that the following diagram commutes
	$$
 	\xymatrix{
	\prod\limits_{i\in I}F(M_{i}^{n})\ar[rr]^-{d^n_{M_{*}}}\ar[d]_-{F(\pi_{i}^{n})} & & \prod\limits_{i\in I}M_{i}^{n+1} \ar[d]^-{\pi_{i}^{n+1}} \\
	F(M_{i}^n)\ar[rr]_{d^n_{M_{i}}}&&M_{i}^{n+1} 
	} 
	$$
for all $j\in I$. Since $F$ preserves limits, we obtain that $\prod\limits_{i\in I} F(M_{i}^{n})=F\Big(\prod\limits_{i\in I} M_{i}^{n}\Big)$ and, consequently, $d^n_{M_{*}}\in\textup{Hom}_{\mathcal{C}}\left(F\left(M_{*}^{n}\right), M_{*}^{n+1}\right)$, $F\left(\pi_{i}^{n}\right)\in\textup{Hom}_{\mathcal{C}}\left(F\left(M_{*}^{n}\right),M_{i}^{n}\right)$ for all $n\in\mathbb{Z}$ and $i\in I$. Finally, observe that
    $$
	d_{M_{*}}^{n+1}\circ F\big(d_{M_{*}}^{n}\big)
	=\left(\prod\limits_{i\in I}d^{n+1}_{M_{i}}\right)\circ F\left(\prod\limits_{i\in I}d^n_{M_{i}}\right)
	=\left(\prod\limits_{i\in I}d^{n+1}_{M_{i}}\right)\circ \left(\prod\limits_{i\in I}F(d^n_{M_{i}})\right)
	=\prod\limits_{i\in I}\left(d^{n+1}_{M_{i}}\circ F(d^n_{M_{i}})\right)
	=0.
	$$

\end{enumerate}

\end{proof}


\begin{theorem}\label{thm:completness}
Let $\mathcal{C}$ be a complete category, and let $F\in\textup{End}(\mathcal{C})$ be a continuous endofunctor. Then, the direct limit of a direct system $\left\{M_i,\varphi_{ij}\right\}_{i,j\in I}$ in $\mathfrak{L}_F(\mathcal{C})$ on the poset $I$, is the object $\lim\limits_{\stackrel{\longrightarrow}{i\in I}} M_i=\bigg(\lim\limits_{\stackrel{\longrightarrow}{i\in I}}M_{i}^{n},\lim\limits_{\stackrel{\longrightarrow}{i\in I}} d_{M_{i}}^n\bigg)_{n\in\mathbb{Z}}$ together with morphisms $\alpha_{i}:=\left(\alpha_{i}^{n}\right)_{n\in\mathbb{Z}}: M_i\longrightarrow \lim\limits_{\stackrel{\longrightarrow}{i\in I}} M_i$ in $\mathfrak{L}_F(\mathcal{C})$,  where for each $n\in \mathbb{Z}$, the direct limit of the direct system $\left\{M_i^n,\varphi_{ij}^n\right\}_{i,j\in I}$ in $\mathcal{C}$ is the object $\lim\limits_{\stackrel{\longrightarrow}{i\in I}}M_{i}^{n}$ together with morphisms $\alpha_{i}^{n}:M_{i}^{n}\longrightarrow \lim\limits_{\stackrel{\longrightarrow}{i\in I}} M_{i}^{n}$, and  $\lim\limits_{\stackrel{\longrightarrow}{i\in I}} d_{M_{i}}^n:F\bigg(\lim\limits_{\stackrel{\longrightarrow}{i\in I}}M_{i}^{n}\bigg)\longrightarrow \lim\limits_{\stackrel{\longrightarrow}{i\in I}}M_{i}^{n+1}$ is the morphism induced by the universal property of limits on $\mathcal{C}$.
\end{theorem}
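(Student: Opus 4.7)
The construction is pointwise, and mirrors the familiar proof for cochain complexes. First I would take $L^n := \lim\limits_{\stackrel{\longrightarrow}{i\in I}} M_i^n$ with the structure maps $\alpha_i^n : M_i^n \to L^n$ furnished by the direct limit in $\mathcal{C}$. Since $F$ is continuous (preserves direct limits), $F(L^n)$ together with $F(\alpha_i^n)$ realizes $\lim\limits_{\stackrel{\longrightarrow}{i\in I}} F(M_i^n)$. The differential $d^n_L : F(L^n) \to L^{n+1}$ is then produced by invoking the universal property of this colimit on the family $\alpha_i^{n+1} \circ d_{M_i}^n$. Its compatibility under the transition morphisms is a direct consequence of combining the fact that each $\varphi_{ij}$ is a morphism in $\mathfrak{L}_F(\mathcal{C})$ (so $\varphi_{ij}^{n+1} d_{M_i}^n = d_{M_j}^n F(\varphi_{ij}^n)$) with the cocone identity $\alpha_j^{n+1}\varphi_{ij}^{n+1} = \alpha_i^{n+1}$. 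Thus there is a unique $d^n_L$ with $d^n_L \circ F(\alpha_i^n) = \alpha_i^{n+1}\circ d_{M_i}^n$ for every $i \in I$, and by construction the $\alpha_i$ assemble into morphisms $\alpha_i : M_i \to L$ in $\mathfrak{L}_F(\mathcal{C})$.

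Next I would verify that $L=(L^n,d^n_L)_{n\in\mathbb{Z}}$ actually lies in $\mathfrak{L}_F(\mathcal{C})$, i.e. $d^{n+1}_L \circ F(d^n_L) = 0$. Applying $F$ twice and using continuity, $F^2(L^n)$ with the maps $F^2(\alpha_i^n)$ is the colimit $\lim\limits_{\stackrel{\longrightarrow}{i\in I}} F^2(M_i^n)$. Precomposing the desired relation with these structure maps and unwinding with the defining property of $d^n_L$ gives
\[
d^{n+1}_L \circ F(d^n_L) \circ F^2(\alpha_i^n) = \alpha_i^{n+2}\circ d^{n+1}_{M_i}\circ F(d^n_{M_i}) = 0,
\]
for every $i\in I$, since $M_i \in \mathfrak{L}_F(\mathcal{C})$. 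The uniqueness part of the universal property of $\lim\limits_{\stackrel{\longrightarrow}{i\in I}} F^2(M_i^n)$ then forces $d^{n+1}_L \circ F(d^n_L) = 0$.

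Finally, to check the universal property, I would take an arbitrary cocone $\beta_i : M_i \to N$ in $\mathfrak{L}_F(\mathcal{C})$ (so each $\beta_i^{n+1} d^n_{M_i} = d^n_N F(\beta_i^n)$ and the components are compatible under the $\varphi_{ij}^n$). In each degree the universal property in $\mathcal{C}$ yields a unique $\gamma^n : L^n \to N^n$ with $\gamma^n \alpha_i^n = \beta_i^n$; it remains to show that $\gamma = (\gamma^n)_{n\in\mathbb{Z}}$ intertwines the differentials. Precomposing $\gamma^{n+1}\circ d^n_L$ and $d^n_N \circ F(\gamma^n)$ with each $F(\alpha_i^n)$ and using the identities above, both sides collapse to $\beta_i^{n+1}\circ d^n_{M_i} = d^n_N\circ F(\beta_i^n)$; continuity of $F$ and the uniqueness in the universal property of $F(L^n) = \lim_{\to} F(M_i^n)$ then yields $\gamma^{n+1}\circ d^n_L = d^n_N \circ F(\gamma^n)$, and uniqueness of $\gamma$ itself follows from uniqueness in each degree.

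The main (and essentially the only) point requiring care is the systematic reliance on the continuity hypothesis on $F$: it is precisely what allows $F(L^n)$ and $F^2(L^n)$ to inherit the status of colimits and thereby makes all three universal-property arguments---the construction of $d^n_L$, the verification of the cocycle condition, and the morphism property of $\gamma$---go through verbatim after applying $F$.
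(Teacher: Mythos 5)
Your proof is correct and follows exactly the componentwise universal-property strategy that the paper intends: the paper's own ``proof'' of this theorem consists of a single sentence deferring to the product construction in Theorem~\ref{thm:prod and eq}, and your argument is precisely that construction carried out for direct limits. You supply in full the three verifications the paper leaves implicit --- the cocone compatibility needed to induce $d^n_L$, the relation $d^{n+1}_L\circ F(d^n_L)=0$ via the colimit $F^2(L^n)$, and the intertwining property of the induced morphism $\gamma$ --- each hinging, as you correctly emphasize, on the hypothesis that $F$ preserves direct limits.
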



\begin{proof}
The proof follows the similar lines of reasoning in the proof of the existence of the product in Theorem \ref{thm:prod and eq}.

\end{proof} 


\begin{remark}\rm
\label{rem:pullback}

\noindent

\begin{enumerate}[(i)]
\item Similar results to Theorem \ref{thm:completness} holds for $\mathfrak{L}_F(\mathcal{C})$ when $\mathcal{C}$ is a co-complete category and $F$ is a co-continuous functor, which guarantees the existence of inverse limits in $\mathfrak{L}_F(\mathcal{C})$. Likewise, the same results applying to the categories $\mathfrak{L}^b_F(\mathcal{C})$,  $\mathfrak{R}_F(\mathcal{C})$ and $\mathfrak{R}^b_F(\mathcal{C})$.

\item Let $\mathcal{C}$ be a complete category. An example of limit is the pullback of two morphisms $\varphi:M\longrightarrow N$ and $\psi:P\longrightarrow N$ in $\mathfrak{L}_F(\mathcal{C})$. In this case, the pullback is the pair $M\times_{N}P=\left(M^n\times_{N^n}P^n,d^n_{M\times_{N}P}\right)_{n\in\mathbb{Z}}$, where $M^n\times_{N^n}P^n$ is the pullback in $\mathcal{C}$ given by $\varphi^n:M^n\longrightarrow N^n$ and $\psi^n:P^n\longrightarrow N^n$, and $d^n_{M\times_{N}P}$ is obtained as in the proof of Proposition \ref{thm:prod and eq}(ii). Diagrammatically
	$$
	\xymatrix{
	& M^{n-1}\times_{N^{n-1}}P^{n-1} \ar[rr] \ar@{~>}[dl]_-{d^{n-1}_{M\times_{N}P}} \ar'[d][dd] & & P^{n-1} \ar[dd]^-{\psi^{n-1}}\ar@{~>}[dl]_-{d^{n-1}_P}\\
	M^{n}\times_{N^{n}}P^{n} \ar[rr] \ar[dd] & & P^{n} \ar[dd] \\
	& M^{n-1} \ar'[r]^{\varphi^{n-1}}[rr] \ar@{~>}[dl]_-{d^{n-1}_M} & & N^{n-1}\ar@{~>}[dl]^-{d^{n-1}_N}\\
    M^{n} \ar[rr]_-{\varphi^{n}} & & N^{n}
	}
	$$
In particular, an important case of this construction is when $M=0$ and $\varphi=0$. In this case, the pullback coincides with $\ker(\psi)=\left(\ker(\psi^n),d^n_{\ker(\psi)}\right)_{n\in\mathbb{Z}}$.
\end{enumerate}
\end{remark}


\vspace{0.3cm}


In the realm either of Grothendieck categories or of complete and co-complete abelian categories, a natural question arises: how does exactness of sequences in categories like $\mathfrak{L}_F(\mathcal{C})$ (resp.  $\mathfrak{L}^b_F(\mathcal{C})$,  $\mathfrak{R}_F(\mathcal{C})$, $\mathfrak{R}^b_F(\mathcal{C})$) relate to exactness in the underlying category $\mathcal{C}$? It is readily verifiable that exactness in these latter categories ensures exactness of corresponding sequences in $\mathcal{C}$ for each degree. However, the converse implication, that exactness in $\mathcal{C}$ for all degrees implies exactness in the category $\mathfrak{L}_F(\mathcal{C})$ (resp.  $\mathfrak{L}^b_F(\mathcal{C})$,  $\mathfrak{R}_F(\mathcal{C})$, $\mathfrak{R}^b_F(\mathcal{C})$), requires additional conditions, as we will discover through applications the results established in this section in the following Corollary.

\begin{corollary}
\label{cor:seq}
Within a category $\mathcal{C}$ endowed with an endofunctor $F:\mathcal{C}\longrightarrow\mathcal{C}$, consider the following conditions:
\begin{enumerate}[(i)]
    \item $\mathcal{C}$ is an abelian category and $F$ preserves cokernel (resp. preserves kernel).
    \item $\mathcal{C}$ is complete and co-complete category and $F$ preserves products and co-products.
\end{enumerate}
Under either of these conditions, then the sequence
	$
	0\longrightarrow M\stackrel{\varphi}{\longrightarrow}N
\stackrel{\psi}{\longrightarrow} P\longrightarrow0
	$
is exact in $\mathfrak{L}_F(\mathcal{C})$ or $\mathfrak{L}^b_F(\mathcal{C})$ (resp. in $\mathfrak{R}_F(\mathcal{C})$ or $\mathfrak{R}^b_F(\mathcal{C})$) if and only if
	$
	0\longrightarrow M^n\stackrel{\varphi^n}{\longrightarrow} N^n\stackrel{\psi^n}{\longrightarrow} P^n\longrightarrow0
	$
are exacts in $\mathcal{C}$, for all $n\in\mathbb{Z}$.
\end{corollary}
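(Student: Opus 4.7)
The strategy is to show that, under either hypothesis, kernels and cokernels in the $F$-cochain category are computed degreewise, so the notion of exactness immediately reduces to componentwise exactness in $\mathcal{C}$. Both directions then follow by unpacking what ``exact at each spot'' means at the level of kernels and cokernels.

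First, I would handle condition~(i) for $\mathfrak{L}_F(\mathcal{C})$ (the $\mathfrak{R}_F(\mathcal{C})$ case is dual and uses the ``kernel-preserving'' variant). By Theorem~\ref{thm:abelian}(ii)--(iii), when $\mathcal{C}$ is abelian and $F$ preserves cokernels, $\mathfrak{L}_F(\mathcal{C})$ is abelian with kernels and cokernels computed level by level: the proof of those items exhibits $\ker(\varphi)$ and $\operatorname{coker}(\varphi)$ as the collections $(\ker(\varphi^n))_{n\in\mathbb{Z}}$ and $(\operatorname{coker}(\varphi^n))_{n\in\mathbb{Z}}$, equipped with the uniquely induced differentials. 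A sequence $0\to M\stackrel{\varphi}{\to} N\stackrel{\psi}{\to} P\to 0$ being exact in $\mathfrak{L}_F(\mathcal{C})$ means $\varphi=\ker(\psi)$, $\psi=\operatorname{coker}(\varphi)$, $\varphi$ is monic and $\psi$ is epic; since each of these is characterized by a (co)kernel computation that is degreewise, the statement transfers intact to the condition that $\varphi^n=\ker(\psi^n)$ and $\psi^n=\operatorname{coker}(\varphi^n)$ in $\mathcal{C}$ for every $n$, that is, to the exactness of $0\to M^n\to N^n\to P^n\to 0$ in $\mathcal{C}$.

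Condition~(ii) proceeds in exactly the same spirit, now invoking Theorem~\ref{thm:prod and eq}(i). A kernel in $\mathfrak{L}_F(\mathcal{C})$ is nothing but the equalizer of a morphism with the zero morphism; the cited theorem shows that equalizers (and dually coequalizers) are formed degreewise, provided the relevant completeness/cocompleteness is present. The hypothesis that $F$ preserves products and coproducts, together with completeness and cocompleteness of $\mathcal{C}$, lets Theorem~\ref{thm:prod and eq}(ii) furnish the ambient (co)product structure on $\mathfrak{L}_F(\mathcal{C})$; combined with degreewise equalizers/coequalizers this yields degreewise kernels, cokernels, images, and coimages. Exactness in $\mathfrak{L}_F(\mathcal{C})$ is then again equivalent, term by term, to exactness at each degree in $\mathcal{C}$.

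The main (minor) point requiring care is the bookkeeping that identifies the induced differential on $\ker(\psi^n)$ (resp.\ on $\operatorname{coker}(\varphi^n)$) obtained from the universal property with the restriction/quotient of $d_M^n$ (resp.\ $d_P^n$); but this is precisely the verification already carried out in the proofs of Theorem~\ref{thm:abelian}(ii)--(iii) and Theorem~\ref{thm:prod and eq}, so no new calculation is required. The bounded variants $\mathfrak{L}^b_F(\mathcal{C})$ and $\mathfrak{R}^b_F(\mathcal{C})$ inherit the result because all the degreewise constructions preserve the property ``almost all components vanish''; the $\mathfrak{R}$-versions follow by the dual argument, appealing to Theorem~\ref{thm:abelian}(iv)--(v) and Remark~\ref{rem:pullback}(i).
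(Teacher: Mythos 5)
Your proposal is correct and follows essentially the same route as the paper: condition~(i) is handled by the degreewise kernel/cokernel constructions of Theorem~\ref{thm:abelian}, and condition~(ii) by the degreewise limit constructions of Section~\ref{sec:propert}. The only cosmetic difference is that for~(ii) you realize kernels as equalizers with the zero morphism via Theorem~\ref{thm:prod and eq}(i), whereas the paper cites the pullback description of Remark~\ref{rem:pullback}(ii); these are interchangeable instances of the same degreewise-limit argument.
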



\begin{proof}
If the first item holds, the equivalence follows from Theorem~\ref{thm:abelian}. Now, if the second item holds, the equivalence is an application of Remark~\ref{rem:pullback}(ii).

\end{proof}


\section{Projective and injective objects}
\label{sec:proj_inj}

Recall that  $\mathfrak{L}^b_{DA\otimes_A -}\left(A\text{-mod}\right)$ is the ca\-te\-go\-ry of $\widehat{A}$-mod (c.f. Example \ref{example:L_F(C)}(ii)). Likewise, the latter category has enough projectives, and the classes of injective and projective modules coincide, that is, it is a \textit{Frobenius category}. Motivated by these constructions, we will dedicate this section to determines projective and injective objects on $\mathfrak{L}_F(\mathcal{C})$ and $\mathfrak{L}^b_F(\mathcal{C})$ (resp. in $\mathfrak{R}_F(\mathcal{C})$ and $\mathfrak{R}^b_F(\mathcal{C})$), when $\mathcal{C}$ is an abelian category. In this section we will consider only abelian categories.

Let us start introducing an useful object. For any morphism $\psi\in\textup{Hom}_{\mathcal{C}}(F(X),Y)$ (respectively, $\psi\in\textup{Hom}_{\mathcal{C}}(X,F(Y))$), we will denote by $\Sigma^k(\psi)$ the object in $\mathfrak{L}_F(\mathcal{C})$ (resp. in $\mathfrak{R}_F(\mathcal{C})$) concentrated in degrees $(k,k+1)$. That is, $X$ is the term in the $k$th position, $Y$ is the term in the $(k+1)$th position, all other terms are $0$, and $\psi$ in the morphism from $k$th position to $(k+1)$th position. Graphically $\Sigma^k(\psi)$ is represented by
	$$
	\xymatrix{
	\Sigma^k(\psi):=\cdots\ar@{~>}[r] & 0\ar@{~>}[r] & X\ar@{~>}[r]^-{\psi} & Y\ar@{~>}[r] & 0\ar@{~>}[r] & \cdots}
	$$

From now on, a pair of adjoint functors (or an adjoint pair of functors) with $F$ left adjoint and $G$ right adjoint will be denoted by $(F,G)$.


\begin{theorem}
\label{proj&inj}
Let $F,G:\mathcal{C}\longrightarrow \mathcal{C}$ be two endofunctors such that $(F, G)$ is an adjoint pair, where $\eta:1_{\mathcal{C}}\longrightarrow GF$ and  $\varepsilon:FG\longrightarrow 1_{\mathcal{C}}$ are its unit and counit, respectively. 
\begin{enumerate}[(i)]
\item If $P$ is a projective object in $\mathcal{C}$, then 
	$$
	\xymatrix{
	\Sigma^k\big(1_{F(P)}\big):=\cdots\ar@{~>}[r] & 0\ar@{~>}[r] & P\ar@{~>}[r]^-{1_{F(P)}} & F(P)\ar@{~>}[r] & 0\ar@{~>}[r] & \cdots
	}
	$$
and
	$$
	\xymatrix{
	\Sigma^k\big(\eta_P\big):=\cdots\ar@{~>}[r] & 0\ar@{~>}[r] & P\ar@{~>}[r]^-{\eta_P} & F(P)\ar@{~>}[r] & 0\ar@{~>}[r] & \cdots
	}
	$$
are projective objects in $\mathfrak{L}_F(\mathcal{C})$ and $\mathfrak{R}_G(\mathcal{C})$, respectively, for all $k\in\mathbb{Z}$.
\item If $I$ is an injective object in $\mathcal{C}$, then 
	$$
	\xymatrix{
	\Sigma^k(\varepsilon_I):=\cdots\ar@{~>}[r] & 0\ar@{~>}[r] & G(I)\ar@{~>}[r]^-{\varepsilon_I} & I\ar@{~>}[r] & 0\ar@{~>}[r] & \cdots
	}
	$$
and
	$$
	\xymatrix{
	\Sigma^k\big(1_{G(I)}\big):=\cdots\ar@{~>}[r] & 0\ar@{~>}[r] & G(I)\ar@{~>}[r]^-{1_{G(I)}} & I\ar@{~>}[r] & 0\ar@{~>}[r] & \cdots
	}
	$$
are injective objects in $\mathfrak{L}_F(\mathcal{C})$ and $\mathfrak{R}_G(\mathcal{C})$, respectively, for all $k\in\mathbb{Z}$.
\end{enumerate}
\end{theorem}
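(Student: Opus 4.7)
The plan is to prove each of the four assertions by establishing a natural $\Hom$-bijection that reduces the required universal property in $\mathfrak{L}_F(\mathcal{C})$ or $\mathfrak{R}_G(\mathcal{C})$ to the projectivity of $P$, respectively the injectivity of $I$, in $\mathcal{C}$. Specifically, for every object $N$ (resp.\ $M$) in the appropriate category, I will exhibit natural bijections
\begin{align*}
\Hom_{\mathfrak{L}_F(\mathcal{C})}\!\left(\Sigma^k(1_{F(P)}), N\right) &\;\cong\; \Hom_{\mathcal{C}}(P, N^k),\\
\Hom_{\mathfrak{R}_G(\mathcal{C})}\!\left(\Sigma^k(\eta_P), N\right) &\;\cong\; \Hom_{\mathcal{C}}(P, N^k),\\
\Hom_{\mathfrak{L}_F(\mathcal{C})}\!\left(M, \Sigma^k(\varepsilon_I)\right) &\;\cong\; \Hom_{\mathcal{C}}(M^{k+1}, I),\\
\Hom_{\mathfrak{R}_G(\mathcal{C})}\!\left(M, \Sigma^k(1_{G(I)})\right) &\;\cong\; \Hom_{\mathcal{C}}(M^{k+1}, I),
\end{align*}
each sending a morphism to the one component in which it is forced to carry nontrivial information. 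Once these are in place, the projectivity and injectivity statements will fall out by lifting (resp.\ extending) along a component epimorphism (resp.\ monomorphism) in $\mathcal{C}$.

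For the first bijection, the $F$-commutativity at index $k$ forces $\varphi^{k+1} = d_N^k F(\varphi^k)$, so $\varphi$ is entirely determined by $\varphi^k:P\to N^k$; conversely, given $f:P\to N^k$, the recipe $\varphi^k := f$, $\varphi^{k+1} := d_N^k F(f)$ (and zero elsewhere) defines a morphism, the only nontrivial check being the commutativity at $n=k+1$, which uses the differential identity $d_N^{k+1} F(d_N^k)=0$ in $\mathfrak{L}_F(\mathcal{C})$. For the second bijection the relevant condition at $n = k$ reads $d_N^k \varphi^k = G(\varphi^{k+1})\eta_P$; the map $\psi\mapsto G(\psi)\eta_P$ is precisely the adjunction isomorphism $\Hom_{\mathcal{C}}(F(P), N^{k+1}) \cong \Hom_{\mathcal{C}}(P, G(N^{k+1}))$, so $\varphi^{k+1}$ is forced to be $\varepsilon_{N^{k+1}} F(d_N^k \varphi^k)$, while the remaining condition at $n = k+1$, namely $d_N^{k+1}\varphi^{k+1}=0$, transports across the adjunction (via naturality of $\eta$) to $G(d_N^{k+1})\,d_N^k\,\varphi^k$, which vanishes by the differential identity in $N$. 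The third and fourth bijections are the exact duals: in $\Hom(M,\Sigma^k(\varepsilon_I))$ the component $\varphi^k$ is determined by $\varphi^{k+1}$ via $\varphi^k = G(\varphi^{k+1}d_M^k)\eta_{M^k}$, and naturality of $\eta$ combined with $d_M^k F(d_M^{k-1})=0$ makes the condition at $n=k-1$ automatic; the fourth case is analogous, with the roles of $\eta$ and $\varepsilon$ interchanged.

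With the four bijections established, the projectivity and injectivity statements follow immediately. For instance, given an epimorphism $\pi: M \to N$ in $\mathfrak{L}_F(\mathcal{C})$ (taken component-wise epi in $\mathcal{C}$) and any morphism $\varphi:\Sigma^k(1_{F(P)})\to N$, projectivity of $P$ produces a lift $\tilde{\alpha}:P\to M^k$ of $\varphi^k$ along $\pi^k$; the first bijection then reassembles $\tilde{\alpha}$ into the desired lift $\tilde{\varphi}:\Sigma^k(1_{F(P)})\to M$ of $\varphi$, and the remaining three cases follow identically. The main obstacle is the bookkeeping of the adjunction: several compatibility identities must be moved across $(F,G)$, and the verifications rely on the naturality of $\eta$ and $\varepsilon$, the triangular identities, and the differential identities of objects in $\mathfrak{L}_F(\mathcal{C})$ and $\mathfrak{R}_G(\mathcal{C})$; once these translations are performed cleanly, each step reduces to a routine diagram chase.
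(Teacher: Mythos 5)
Your proposal is correct and, at its core, carries out the same computations as the paper's own proof: in every one of the four cases the ``missing'' component of a morphism is forced by the single nonzero structure map (directly for $\Sigma^k\big(1_{F(P)}\big)$ and $\Sigma^k\big(1_{G(I)}\big)$, and via the adjunction bijection together with naturality of $\eta$, $\varepsilon$ and the differential identity for $\Sigma^k(\eta_P)$ and $\Sigma^k(\varepsilon_I)$), which is precisely how the paper builds its lift. Packaging this as the natural isomorphisms $\Hom_{\mathfrak{L}_F(\mathcal{C})}\big(\Sigma^k\big(1_{F(P)}\big),-\big)\cong\Hom_{\mathcal{C}}\big(P,(-)^k\big)$, etc., is a slightly cleaner organization of the identical argument rather than a different route; like the paper (which invokes Corollary~\ref{cor:seq}), you still owe the remark that epimorphisms in $\mathfrak{L}_F(\mathcal{C})$ and $\mathfrak{R}_G(\mathcal{C})$ (resp.\ monomorphisms) are componentwise, which holds here because $F$ is a left adjoint and hence preserves cokernels.
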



\begin{proof}
We only prove that $(i)$ holds, since the proof of $(ii)$ is similar with the natural adaptations.

To show that $\Sigma^k\big(1_{F(P)}\big)$ is a projective object in $\mathfrak{L}_F(\mathcal{C})$, consider an epimorphism $\phi:M\longrightarrow N$ and a morphism $\psi:\Sigma^k\big(1_{F(P)}\big)\longrightarrow N$ in $\mathfrak{L}_F(\mathcal{C})$. Since $\phi^{k}:M^{k}\longrightarrow N^{k}$ is epimorphism (by Corollary~\ref{cor:seq}) and $P$ is projective in $\mathcal{C}$, there exists a morphism $\varphi^{k}:P\longrightarrow M^k$ in $\mathcal{C}$ such that $\psi^k=\phi^k\varphi^k$. This proof ends setting $\varphi^{k+1}=d_M^kF\big(\varphi^k\big)$.

On the other hand, to show that $\Sigma^k\big(\eta_P\big)$ is a projective object in $\mathfrak{R}_G(\mathcal{C})$, consider an epimorphism $\phi:M\longrightarrow N$ and a morphism $\psi:\Sigma^k\big(\eta_P\big)\longrightarrow N$ in $\mathfrak{R}_G(\mathcal{C})$. Since $\phi^{k}:M^{k}\longrightarrow N^{k}$ is epimorphism (by Corollary~\ref{cor:seq}) and $P$ is projective in $\mathcal{C}$, there exists a morphism $\varphi^{k}:P\longrightarrow M^k$ in $\mathcal{C}$ such that $\psi^k=\phi^k\varphi^k$. Note that $d_M^k\varphi^k\in\textup{Hom}_{\mathcal{C}}\big(P,G\big(M^{k+1}\big)\big)$. Now, by adjointness, there exists a unique morphism $\varphi^{k+1}:F(P)\longrightarrow M^{k+1}$ in $\mathcal{C}$ such that $G\big(\varphi^{k+1}\big)\eta_{P}=d_M^k\varphi^k$. Now, due to the fact that $G\big(d_M^{k+1}\varphi^{k+1}\big)\eta_{P}=G\big(d_M^{k+1}\big)d_M^k\varphi^k$, we obtain by adjointness that $d_M^{k+1}\varphi^{k+1}:F(P)\longrightarrow G\big(M^{k+2}\big)$ is the unique morphism with such condition. Since, by definition, $G\big(d_M^{k+1}\big)d_M^k=0$, we conclude that $d_M^{k+1}\varphi^{k+1}=0$. Thus, the sequence  $\varphi:=\left(\varphi^n\right)_{n\in\mathbb{Z}}$ with $\varphi^n=0$ for $n\neq k,k+1$ is a morphism from $\Sigma^k\big(\eta_P\big)$ to $N$ in $\mathfrak{R}_G(\mathcal{C})$. Now, we need to show that the following diagram commutes
	$$
	\xymatrix{
	 & \Sigma^k\big(\eta_P\big)\ar[d]^-{\psi} \ar@{-->}_-{\varphi}[dl]& \\
	M \ar[r]_-{\phi} & N\ar[r] & 0}.
	$$
For this, it is enough to prove the commutativity of the following diagram in $\mathcal{C}$
	\begin{equation*}
	\xymatrix{
	P\ar[rr]^-{\eta_P}\ar[dddr]^-{\psi^k}\ar@{-->}[dd]_-{\varphi^k}&&G(F(P))\ar[dddr]^-{G(\psi^{k+1})}\ar@{-->}[dd]_-{G(\varphi^{k+1})}&\\
	&&&\\
	M^k\ar[rr]^-{d^k_M}\ar[dr]_-{\phi^k}&&G(M^{k+1})\ar[dr]_-{G(\phi^{k+1})}&\\
	&N^k\ar[rr]_-{d^k_N}&&G(N^{k+1})}.
	\end{equation*}

Indeed, since $\psi^k=\phi^k\varphi^k$, $G\big(\varphi^{k+1}\big)\eta_{P}=d_M^k\varphi^k$ and $\psi,\phi$ are morphisms in $\mathfrak{R}_G(\mathcal{C})$, we only need to see that $G(\phi^{k+1})G(\varphi^{k+1})=G(\psi^{k+1})$. To this end, observe that by adjointness the morphism $\psi^{k+1}:F(P)\longrightarrow N^{k+1}$ is the unique such that $G(\psi^{k+1})\eta_P=d_N^k\psi^k$. Nevertheless,
    $$ G(\phi^{k+1}\varphi^{k+1})\eta_P=G(\phi^{k+1})G(\varphi^{k+1})\eta_P=G(\phi^{k+1})d_M^k\varphi^k=d_N^k\phi^k\varphi^k=d_N^k\psi^k.
    $$
In conclusion, we obtain that $\phi^{k+1}\varphi^{k+1}=\psi^{k+1}$, which completes the proof.

\end{proof}



\begin{remark}\rm
Under the above notations and Theorem \ref{proj&inj}, an important consequence is: if
$\Sigma^k\big(1_{F(P)}\big)$ and $\Sigma^k(\varepsilon_I)$ are isomorphic objects in $\mathfrak{L}_F(\mathcal{C})$ (respectively, $\Sigma^k(\eta_{P})$ and $\Sigma^k(1_{G(I)})$ are isomorphic objects in $\mathfrak{R}_G(\mathcal{C})$) if and only if there exist isomorphisms $\phi:G(I)\longrightarrow P$ and $\psi:I\longrightarrow F(P)$ in $\mathcal{C}$ such that $\varepsilon_IF(\phi)=\psi$. 
\end{remark}


\begin{definition}
\label{def:fundexact}
The \emph{fundamental exact sequences} of an object $M\in\mathfrak{L}_F(\mathcal{C})$ are, for all $n\in\mathbb{Z}$
	$$
	\xymatrix@R=7pt{
	0\ar[r] & \textup{Im}\big(F\big(d^{n-1}_{M}\big)\big)\ar@{^{(}->}[r]^-{\iota^{n-1}}& \ker(d^{n}_{M})\ar@{->>}[r]& \ker\big(d^{n}_{M}\big)/\textup{Im}\big(F\big(d^{n-1}_{M}\big)\big)\ar[r] & 0\\
	0\ar[r] & \ker\big(d^{n}_{M}\big)\ar@{^{(}->}[r]^-{\widehat{i^n}}& F(M^{n})\ar[r]^-{\delta^n}& \textup{Im}\big(d^{n}_{M}\big)\ar[r] & 0
	}
	$$
where $\delta^n:F(M^n)\twoheadrightarrow\textup{Im}\big(d^n_{M}\big)$ is the unique epimorphism such that $d^n_M=i^n\delta^n$, with $i^n:\textup{Im}\big(d^n_{M}\big)\hookrightarrow F(M^{n+1})$ is the monomorphism inclusion. We will say that $M$ is \emph{split} if all its fundamental exact sequences are split in $\mathcal{C}$.
\end{definition}


\begin{remark}\rm
Let $F:\mathcal{C}\longrightarrow\mathcal{C}$ be a right exact functor and $M\in \mathfrak{L}_F(\mathcal{C})$. From Definition \ref{def:fundexact}, we obtain that the following diagram is commutative
	$$
	\xymatrix@C=18pt@R=24pt{
	F^2(M^n)\ar[rr]^-{F(d_M^n)}\ar@{ ->>}[rd]_-{F(\delta^n)}& & F(M^{n+1})\ar[rr]^-{d^{n+1}_M}\ar@{ ->>}[rd]_-{\delta^{n+1}}& & M^{n+2}\\
	& F(\textup{Im}(d^n_M))=\textup{Im}(F(d^n_M))\ar[ru]_-{F(i^n)} &  & \textup{Im}(d^{n+1}_M)\ar@{^{(}->}[ru]_-{i^{n+1}}}
	$$
Since $\iota^n$ and $\widehat{i^{n+1}}$ are natural inclusions, it follows that $F(d_M^n)=\widehat{i^{n+1}}\iota^nF(\delta^n)$. Consequently, $F(i^n)=\widehat{i^{n+1}}\iota^n$, because $F(\delta^n)$ is an epimorphism in $\mathcal{C}$ and, by universal property of kernel, the inclusion $\iota^n$ is the unique morphism in $\mathcal{C}$ with that property. Therefore, $F(i^n)$ is a monomorphism and the morphism in the left square in the diagram below, are all commutative:
	$$
	\xymatrix@C=18pt@R=24pt{
	& F^2(M^n)\ar[rr]^-{F(d^n_M)}\ar@{ ->>}[d]_-{F(\delta^n)}& & F(M^{n+1})\ar[rr]^-{d^{n+1}_M}& & M^{n+2}&\\
	0\ar[r]& F(\textup{Im}(d^n_M))=\textup{Im}(F(d^n_M))\ar[rru]_-{F(i^n)}\ar@{^{(}->}[rr]_-{\iota^{n}}& &  \ker\left(d^{n+1}_M\right)\ar@{^{(}->}[u]_-{\widehat{i^{n+1}}}\ar@{->>}[rr]&  & \ker\big(d^{n+1}_{M}\big)/\textup{Im}\big(F\big(d^{n}_{M}\big)\big)\ar[r] & 0
	}
	$$
\end{remark}

\vspace{.5cm}
In the following proposition we will use the extended endofunctor $\widehat{F}:\mathfrak{L}_F(\mathcal{C})\longrightarrow \mathfrak{L}_F(\mathcal{C})$ constructed in Corollary~\ref{cor:Fext}.


\begin{proposition}
\label{split}
Consider $F:\mathcal{C}\longrightarrow\mathcal{C}$ a right exact endofunctor. If $M\in \mathfrak{L}_F(\mathcal{C})$ is split, then we have an isomorphism:
	$$
	\widehat{F}(M)\cong\bigoplus_{k\in\mathbb{Z}}\Sigma^k(\iota^k).
	$$
\end{proposition}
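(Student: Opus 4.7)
The plan is to exhibit the isomorphism degree by degree and verify compatibility with the differentials via the factorization $F(d_M^n)=\widehat{i^{n+1}}\,\iota^n\,F(\delta^n)$ recorded in the remark preceding the proposition, which is available precisely because $F$ is right exact, so that $F(\textup{Im}(d_M^k))=\textup{Im}(F(d_M^k))$ and hence $\iota^k$ is genuinely a morphism $F(\textup{Im}(d_M^k))\longrightarrow\ker(d_M^{k+1})$, making $\Sigma^k(\iota^k)\in\mathfrak{L}_F(\mathcal{C})$ well-defined.

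First, I would describe the right-hand side concretely. Since $\Sigma^k(\iota^k)$ is concentrated in degrees $k,k+1$ with $\textup{Im}(d_M^k)$ in degree $k$ and $\ker(d_M^{k+1})$ in degree $k+1$, only $\Sigma^{n-1}(\iota^{n-1})$ and $\Sigma^n(\iota^n)$ contribute in degree $n$, so
\[
\Big(\bigoplus_{k\in\mathbb{Z}}\Sigma^k(\iota^k)\Big)^{n} \;=\; \textup{Im}(d_M^n)\oplus \ker(d_M^n),
\]
with $n$-th differential $F(\textup{Im}(d_M^n))\oplus F(\ker(d_M^n))\longrightarrow \textup{Im}(d_M^{n+1})\oplus\ker(d_M^{n+1})$ whose unique nonzero entry is $\iota^n$ landing in the $\ker$-summand. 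Since only two summands are nonzero in each degree, this coproduct exists in $\mathfrak{L}_F(\mathcal{C})$ without any additional hypothesis on $\mathcal{C}$.

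Second, I would invoke splitness of $M$: for every $n$, the second fundamental exact sequence splits, furnishing a retraction $r^n\colon F(M^n)\to\ker(d_M^n)$ of $\widehat{i^n}$ and a section $s^n\colon \textup{Im}(d_M^n)\to F(M^n)$ of $\delta^n$ with $\widehat{i^n}r^n+s^n\delta^n=1_{F(M^n)}$. The candidate isomorphism is
\[
\Phi^n \;:=\; (\delta^n,\,r^n)\colon F(M^n)\xrightarrow{\;\sim\;} \textup{Im}(d_M^n)\oplus\ker(d_M^n),
\]
with inverse $(s^n,\widehat{i^n})$.

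Finally, the main verification is that $\Phi=(\Phi^n)_{n\in\mathbb{Z}}$ is a morphism in $\mathfrak{L}_F(\mathcal{C})$, i.e.\ $\Phi^{n+1}\circ F(d_M^n)=d_{\oplus}^n\circ F(\Phi^n)$. Substituting $F(d_M^n)=\widehat{i^{n+1}}\,\iota^n\,F(\delta^n)$ and expanding $\Phi^{n+1}=(\delta^{n+1},r^{n+1})$ componentwise reduces the equality to the two identities $\delta^{n+1}\widehat{i^{n+1}}=0$ and $r^{n+1}\widehat{i^{n+1}}=1_{\ker(d_M^{n+1})}$; the first follows since $d_M^{n+1}\widehat{i^{n+1}}=0$ and $i^{n+1}$ is a monomorphism, and the second is the defining property of the retraction. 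I expect no serious obstacle beyond carefully tracking the two factorizations of $F(d_M^n)$ and keeping the identification $F(\textup{Im})=\textup{Im}(F)$ in mind throughout.
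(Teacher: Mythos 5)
Your proposal is correct and follows essentially the same route as the paper: both use the splitting of the second fundamental exact sequence to build the degreewise isomorphism $F(M^n)\cong \textup{Im}(d_M^n)\oplus\ker(d_M^n)$ (your $(\delta^n, r^n)$ is the paper's $\psi^n=(p^n,\delta^n)$ up to reordering the summands) and verify compatibility with the differentials via the factorization $F(d_M^n)=\widehat{i^{n+1}}\,\iota^n\,F(\delta^n)$ supplied by right exactness. The only cosmetic difference is that the paper also checks explicitly that the degreewise inverse is a morphism in $\mathfrak{L}_F(\mathcal{C})$, which in your version follows automatically since a degreewise-invertible morphism of $F$-cochain complexes has a degreewise inverse that is again such a morphism.
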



\begin{proof}
Suppose that all fundamental exact sequences of $M$ are split (see Definition~\ref{def:fundexact}). This implies that there exist morphisms $p^n:F(M^n)\longrightarrow\ker\left(d^n_{M}\right)$ and $q^n:\textup{Im}\left(d^n_{M}\right)\longrightarrow F(M^{n})$ in $\mathcal{C}$ such that
	$$
	\xymatrix{
	0\ar[r] & \ker\big(d^{n}_{M}\big)\ar@<2pt>[r]^{\widehat{i^n}}& F(M^{n})\ar@<2pt>[r]^{\delta^n}\ar@<2pt>[l]^{p^n}& \textup{Im}(d^{n}_{M})\ar[r]\ar@<2pt>[l]^-{q^n} & 0,
	}
	$$
which satisfying
 
	$$
	p^n\widehat{i^n}=1_{\ker\left(d^{n}_{M}\right)},\ \ \ \delta^nq^n=1_{\textup{Im}\left(d^{n}_{M}\right)},\ \ \ \delta^n\widehat{i^n}=0,\ \ \ p^nq^n=0\ \ \mbox{and}\ \ \widehat{i^n}p^n+q^n\delta^n=1_{F\left(M^{n}\right)}.
	$$
Now, since
	$$
	\xymatrix@R=10pt@C=11pt{
	\Sigma^{k-1}(\iota^{k-1})=\cdots\ar@{~>}[r] & 0\ar@{~>}[r] & \textup{Im}(d^{k-1}_M)\ar@{~>}[r]^-{\iota^{k-1}} & \ker(d^{k}_M)\ar@{~>}[r] & 0\ar@{~>}[r] & 0\ar@{~>}[r] & 0\ar@{~>}[r] & \cdots\\
	\ \ \ \ \ \ \Sigma^{k}(\iota^{k})=\cdots\ar@{~>}[r] & 0\ar@{~>}[r] & 0\ar@{~>}[r] & \textup{Im}(d^{k}_M)\ar@{~>}[r]^-{\iota^{k}} & \ker(d^{k+1}_M)\ar@{~>}[r] & 0\ar@{~>}[r] & 0\ar@{~>}[r] & \cdots\\
	\Sigma^{k+1}(\iota^{k+1})=\cdots\ar@{~>}[r] & 0\ar@{~>}[r] & 0\ar@{~>}[r] & 0\ar@{~>}[r] & \textup{Im}(d^{k+1}_M)\ar@{~>}[r]^-{\iota^{k+1}} & \ker(d^{k+2}_M)\ar@{~>}[r] & 0\ar@{~>}[r] & \cdots
	}
	$$
We obtain that $\bigoplus\limits_{k\in\mathbb{Z}}\Sigma^k(\iota^k)$ is described by
	$$
	\xymatrix@C=14pt{
	\cdots\ar@{~>}[r] & \ker(d^{k-1}_M)\oplus \textup{Im}(d^{k-1}_M)\ar@{~>}[rr]^-{\Delta^{k-1}} & & \ker(d^{k}_M)\oplus\textup{Im}(d^{k}_M)\ar@{~>}[rr]^-{\Delta^{k}} & & \ker(d^{k+1}_M)\oplus\textup{Im}(d^{k+1}_M)\ar@{~>}[r]^-{} & \cdots
	}
	$$
where $\Delta^{k}=\left[\begin{array}{cc}
        0 & \iota^{k} \\
        0 & 0
        \end{array}\right]$. Defining the collections of morphisms in $\mathcal{C}$
	$$
	\psi^k:=\left[\begin{array}{c}
        p^k \\
        \delta^k
        \end{array}\right]:F\big(M^k\big)\longrightarrow\ker(d^{k}_M)\oplus\textup{Im}\big(d^{k}_M\big)
	$$ 
and 
	$$
	\varphi^k=\left[\begin{array}{cc}
        \widehat{i^k} & q^k
        \end{array}\right]:\ker(d^{k}_M)\oplus\textup{Im}(d^{k}_M)\longrightarrow F\big(M^k\big)
	$$
we need to prove first that the morphisms
	$$
	\psi:=\left(\psi^n\right)_{n\in\mathbb{Z}}:\widehat{F}(M)\longrightarrow \bigoplus_{k\in\mathbb{Z}}\Sigma^k(\iota^k)\ \ \ \ \ \text{and }\ \ \ \ 	\varphi:=\left(\varphi^n\right)_{n\in\mathbb{Z}}:\bigoplus_{k\in\mathbb{Z}}\Sigma^k(\iota^k)\longrightarrow \widehat{F}(M)
	$$ 
belong to $\mathfrak{L}_F(\mathcal{C})$. Specifically, we need to prove that the following diagrams commutes
	 $$
	\xymatrix@C=11pt{
	F^2(M^n)\ar[r]^-{F(d^n_M)}\ar[d]_{F(\psi^n)} & F(M^{n+1})\ar[d]^{\psi^{n+1}} &  F\big(\ker(d^{n}_M)\oplus\textup{Im}(d^{n}_M)\big)\ar[r]^-{\Delta^n}\ar[d]_-{F(\varphi^n)} & \ker(d^{n+1}_M)\oplus\textup{Im}(d^{n+1}_M)\ar[d]^-{\varphi^{n+1}} \\
	F\big(\ker(d^{n}_M)\oplus\textup{Im}(d^{n}_M)\big)\ar[r]_-{\Delta^n}  & \ker(d^{n+1}_M)\oplus\textup{Im}(d^{n+1}_M) &  F^2(M^{n})\ar[r]_-{F(d^n_M)}  & F(M^{n+1})
	}
	$$
Indeed, note that in the left square we have that
	$$
	\Delta^nF(\psi^n)
	=\left[\begin{array}{cc}
        0 & \iota^{n} \\
        0 & 0
        \end{array}\right]\left[\begin{array}{c}
        F(p^n) \\
        F(\delta^n)
        \end{array}\right]
    =\left[\begin{array}{c}
        \iota^nF(\delta^n) \\
        0
        \end{array}\right]
	$$
	$$
	\psi^{n+1}F(d^n_M)
	=\left[\begin{array}{c}
        p^{n+1} \\
        \delta^{n+1}
        \end{array}\right]F(d^n_M)
    =\left[\begin{array}{c}
        p^{n+1}F(d^n_M) \\
        \delta^{n+1}F(d^n_M)
        \end{array}\right]
    =\left[\begin{array}{c}
        p^{n+1}F(i^n)F(\delta^n) \\
        \delta^{n+1}F(i^n)F(\delta^n)
        \end{array}\right]
	$$
Since $i^{n+1}\delta^{n+1}F(i^n)F(\delta^n)=d^{n+1}_MF(d^n_M)=0$ and $i^{n+1}:\textup{Im}\big(d^{n+1}_{M}\big)\longrightarrow F(M^{n+2})$ is a monomorphism, it follows that $\delta^{n+1}F(i^n)F(\delta^n)=0$. Thus,
	$$
	\psi^{n+1}F(d^n_M)
	=\left[\begin{array}{c}
        p^{n+1}F(i^n)F(\delta^n) \\
        0
        \end{array}\right]
	$$
In conclusion, since $F(i^n)=\widehat{i^{n+1}}\iota^n$ and $p^{n+1}\widehat{i^{n+1}}=1_{\ker(d^{n+1}_{M})}$, we obtain $p^{n+1}F(i^n)=p^{n+1}\widehat{i^{n+1}}\iota^n=\iota_n$ and, consequently, $\Delta^nF(\psi^n)=\psi^{n+1}F(d^n_M)$, that is, the commutativity in the left square.

For the commutativity in the right square, note that
	$$
	\varphi^{n+1}\Delta^n
	=\left[\begin{array}{cc}
        \widehat{i^{n+1}} & q^{n+1}
        \end{array}\right]\left[\begin{array}{cc}
        0 & \iota^{n} \\
        0 & 0
        \end{array}\right]
    =\left[\begin{array}{cc}
        0 & \widehat{i^{n+1}}\iota^n
        \end{array}\right]
     =\left[\begin{array}{cc}
      0 & F(i^n)
        \end{array}\right]
	$$
	\begin{align*}
	F(d^n_M)F(\varphi^{n})
	&=F(d^n_M)\left[\begin{array}{cc}
        F(\widehat{i^{n}}) & F(q^{n})
        \end{array}\right]
    =\left[\begin{array}{cc}
      F(d^n_M)F(\widehat{i^{n}}) & F(d^n_M)F(q^{n})
        \end{array}\right]\\
	&=\left[\begin{array}{cc}
      0 & F(i^n)F(\delta^n)F(q^{n})
        \end{array}\right]
     =\left[\begin{array}{cc}
      0 & F(i^n)
        \end{array}\right]
	\end{align*}
	Hence $\varphi^{n+1}\Delta^n=F(d^n_M)F(\varphi^{n})$, which prove the commutativity of the aforementioned square. 
	
Finally, for all $n\in\mathbb{Z}$, it is only remaining to note that we have
	$$
	\varphi^n\psi^n=\left[\begin{array}{cc}
        \widehat{i^n} & q^n
        \end{array}\right]\left[\begin{array}{c}
        p^n \\
        \delta^n
        \end{array}\right]=
        \widehat{i^n}p^n+q^n\delta^n=1_{F(M^n)}
	$$
	and
	\begin{align*}
	\psi^n\varphi^n
	&=\left[\begin{array}{c}
        p^n \\
        \delta^n
        \end{array}\right]\left[\begin{array}{cc}
        \widehat{i^n} & q^n
        \end{array}\right]
	=\left[\begin{array}{cc}
        p^n\widehat{i^n} & p^nq^n\\
        \delta^n\widehat{i^n} & \delta^nq^n
        \end{array}\right]=\left[\begin{array}{cc}
        1_{\ker(d^n_M)} & 0\\
        0 & 1_{\textup{Im}(d^n_M)}
        \end{array}\right]
    =1_{\ker(d^n_M)\oplus\textup{Im}(d^n_M)}
	\end{align*}
	
which allows us to obtain that $\widehat{F}(M)\cong\bigoplus\limits_{k\in\mathbb{Z}}\Sigma^k(\iota^k)$ in $\mathfrak{L}_{F}(\mathcal{C})$.

\end{proof}

\begin{corollary}\label{cor:projtype}
Consider $F,G:\mathcal{C}\longrightarrow\mathcal{C}$ two endofunctors such that $(F,G)$ is an adjoint pair. If $M\in\mathfrak{L}_F(\mathcal{C})$ is split such that each one of the terms is projective in $\mathcal{C}$ and $\textup{Im}\big(F\big(d^{n-1}_{M}\big)\big)=\ker\big(d^{n}_{M}\big)$ for all $n\in\mathbb{Z}$, then $\widehat{F}(M)$ is projective in $\mathfrak{L}_F(\mathcal{C})$.
\end{corollary}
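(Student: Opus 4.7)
The plan is to reduce projectivity of $\widehat{F}(M)$ to that of a coproduct of objects of the form $\Sigma^k(1_{F(P)})$, which Theorem~\ref{proj&inj}(i) already provides. Because $F$ is a left adjoint, it preserves colimits and is in particular right exact; Proposition~\ref{split} then applies to the split object $M$ and yields
$$
\widehat{F}(M) \;\cong\; \bigoplus_{k\in\mathbb{Z}} \Sigma^k(\iota^k)
$$
in $\mathfrak{L}_F(\mathcal{C})$, where $\iota^k$ is the canonical inclusion $\textup{Im}(F(d^k_M)) \hookrightarrow \ker(d^{k+1}_M)$ coming from the first fundamental exact sequence of $M$.

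The hypothesis $\textup{Im}(F(d^{n-1}_M)) = \ker(d^n_M)$ is precisely the assertion that every $\iota^k$ is an isomorphism. Writing $X_k := \textup{Im}(d^k_M)$ and invoking the identification $\textup{Im}(F(d^k_M)) \cong F(X_k)$ that is available because $F$ is right exact (see the remark immediately after Definition~\ref{def:fundexact}), I would exhibit an isomorphism $\Sigma^k(\iota^k) \cong \Sigma^k(1_{F(X_k)})$ in $\mathfrak{L}_F(\mathcal{C})$ given by the identity on $X_k$ in degree $k$ and by $(\iota^k)^{-1}$ in degree $k+1$; checking $F$-commutativity is a straightforward diagram chase.

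It then remains to verify that each $X_k$ is projective in $\mathcal{C}$, after which Theorem~\ref{proj&inj}(i) applied to each summand, together with the fact that coproducts of projectives are projective, will conclude the argument. The splitness of $M$ makes the second fundamental exact sequence $0 \to \ker(d^k_M) \to F(M^k) \to X_k \to 0$ split, so $X_k$ is a direct summand of $F(M^k)$. Since each $M^k$ is projective in $\mathcal{C}$ and the left adjoint $F$ carries projectives to projectives (this is where the adjunction $(F,G)$ enters essentially, via the right adjoint $G$ preserving epimorphisms), $F(M^k)$ is projective, and hence so is the retract $X_k$. The preservation of projectives by $F$ is the main delicate point of the proof; once it is granted, the decomposition above together with Theorem~\ref{proj&inj}(i) yields projectivity of $\widehat{F}(M)$.
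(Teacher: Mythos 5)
Your first two steps track the paper's proof exactly: both arguments apply Proposition~\ref{split} (legitimately, since a left adjoint preserves cokernels and is in particular right exact) to obtain $\widehat{F}(M)\cong\bigoplus_{k\in\mathbb{Z}}\Sigma^k(\iota^k)$, and both use the hypothesis $\textup{Im}\big(F\big(d^{k}_{M}\big)\big)=\ker\big(d^{k+1}_{M}\big)$ together with $\textup{Im}\big(F\big(d^{k}_{M}\big)\big)=F\big(\textup{Im}\big(d^{k}_{M}\big)\big)$ to identify $\iota^k$ with an identity, so that $\Sigma^k(\iota^k)\cong\Sigma^k\big(1_{F(X_k)}\big)$ with $X_k=\textup{Im}\big(d^k_M\big)$. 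The divergence is in how each summand is then shown to be projective, and this is where your argument has a genuine gap. You reduce to proving that $X_k$ is projective in $\mathcal{C}$ and justify this by asserting that the left adjoint $F$ carries projectives to projectives ``via the right adjoint $G$ preserving epimorphisms.'' A right adjoint preserves limits, hence monomorphisms and kernels, but there is no reason whatsoever for it to preserve epimorphisms, and accordingly a left adjoint between abelian categories need not preserve projectives: $F=\mathbb{Q}\otimes_{\mathbb{Z}}-$ on abelian groups is a left adjoint (its right adjoint $\textup{Hom}_{\mathbb{Z}}(\mathbb{Q},-)$ does not preserve epimorphisms) and it sends the projective object $\mathbb{Z}$ to $\mathbb{Q}$, which is not projective. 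So the projectivity of $F(M^k)$, and with it that of its retract $X_k$, does not follow from the stated hypotheses. The gap is not cosmetic: since $\textup{Hom}_{\mathfrak{L}_F(\mathcal{C})}\big(\Sigma^k\big(1_{F(X_k)}\big),N\big)\cong\textup{Hom}_{\mathcal{C}}\big(X_k,N^{k}\big)$ naturally in $N$ (the degree-$(k+1)$ component of any morphism is forced to be $d^k_N F(\psi^k)$), the object $\Sigma^k\big(1_{F(X_k)}\big)$ is projective in $\mathfrak{L}_F(\mathcal{C})$ \emph{if and only if} $X_k$ is projective in $\mathcal{C}$, so your route cannot be completed without some genuinely new input (e.g.\ exactness of $G$, or another reason for $\textup{Im}\big(d^k_M\big)$ to be projective).

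The paper's proof avoids transporting projectivity along $F$ altogether. It exhibits $\Sigma^{n-1}(\iota^{n-1})$ as a direct summand of $\Sigma^{n-1}\big(1_{F(M^n)}\big)$, using the splitting of the second fundamental exact sequence $0\to\ker\big(d^n_M\big)\to F(M^n)\to\textup{Im}\big(d^n_M\big)\to 0$ to realize the degree-$n$ term as a summand of $F(M^n)$; the object $\Sigma^{n-1}\big(1_{F(M^n)}\big)$ is projective by Theorem~\ref{proj&inj}(i) using \emph{only} that its degree-$(n-1)$ term $M^n$ is projective --- projectivity of $F(M^n)$ is never needed, because the lift in degree $n$ comes for free from the formula $\varphi^{n}=d^{n-1}F(\varphi^{n-1})$ in the proof of that theorem. (Even there one should check the degree-$(n-1)$ component of the retraction, i.e.\ that $i^{n-1}:\textup{Im}\big(d^{n-1}_M\big)\hookrightarrow M^n$ splits; the summand inclusion itself is $i^{n-1}$ in degree $n-1$ and $\widehat{i^{n}}$ in degree $n$, compatible because $F(i^{n-1})=\widehat{i^{n}}\iota^{n-1}$.) If you want to salvage your write-up, replace the appeal to ``$F$ preserves projectives'' by this direct-summand argument.
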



\begin{proof}
First of all, note that $1_{\ker\left(d^{n}_{M}\right)}=\iota^{n-1}:\textup{Im}\big(F\big(d^{n-1}_{M}\big)\big)\longrightarrow \ker\big(d^{n}_{M}\big)$ and thus
	$$
	\xymatrix{
	\Sigma^{n-1}(\iota^{n-1})=\cdots\ar[r] & 0\ar[r] & \ker\big(d^{n}_{M}\big)\ar[r]^-{1_{\ker\left(d^{n}_{M}\right)}} & \ker\big(d^{n}_{M}\big)\ar[r] & 0\ar[r] & \cdots}
	$$
Since $M\in\mathfrak{L}_F(\mathcal{C})$ is split, the following exact sequence 
	$$
	\xymatrix{
	0\ar[r] & \ker\big(d^{n}_{M}\big)\ar@{^{(}->}[r]^-{\widehat{i^n}}& F\big(M^{n}\big)\ar[r]^-{\delta^n}& \textup{Im}\big(d^{n}_{M}\big)\ar[r] & 0
	}
	$$
is split. Therefore, we can conclude that the complex $\Sigma^{n-1}(\iota^{n-1})$
is a direct sum of the complex	
    $$
	\xymatrix{
	\Sigma^{n-1}(1_{F(M^n)})=\cdots\ar[r] & 0\ar[r] & F(M^n)\ar[r]^-{1_{F(M^n)}} & F(M^n)\ar[r] & 0\ar[r] & \cdots}
	$$
in $\mathfrak{L}_{\text{Id}}(\mathcal{C})$, because the exact sequence below
	$$
	\xymatrix{
	& 0\ar[d] & 0\ar[d] & 0\ar[d] & 0\ar[d] & \\
	\Sigma^{n-1}(\iota^{n-1})=\cdots\ar[r] & 0\ar[r]\ar[d] & \ker\left(d^{n}_{M}\right)\ar[r]^-{1_{\ker\left(d^{n}_{M}\right)}}\ar[d]^-{\widehat{i^n}} & \ker\left(d^{n}_{M}\right)\ar[r]\ar[d]^-{\widehat{i^n}} & 0\ar[r]\ar[d] & \cdots\\
	\Sigma^{n-1}(1_{F(M^n)})=\cdots\ar[r] & 0\ar[r]\ar[d] & F(M^n)\ar[r]^-{1_{F(M^n)}}\ar[d]^-{\delta^n} & F(M^n)\ar[r]\ar[d]^-{\delta^n} & 0\ar[r]\ar[d] & \cdots\\
	\cdots\ar[r] & 0\ar[r]\ar[d] &  \textup{Im}\left(d^{n}_{M}\right)\ar[r]^-{1_{ \textup{Im}\left(d^{n}_{M}\right)}}\ar[d] &  \textup{Im}\left(d^{n}_{M}\right)\ar[r]\ar[d] & 0\ar[r]\ar[d] & \cdots\\
	 & 0 & 0 & 0 & 0 & }
	$$
is split in $\mathfrak{L}_{\text{Id}}(\mathcal{C})$.	Hence, by Theorem \ref{proj&inj}, $\Sigma^{n-1}(\iota_{n-1})$ is projective in $\mathfrak{L}_F(\mathcal{C})$. Finally, by Theorem \ref{split}, we conclude that $	\widehat{F}(M)\cong\bigoplus\limits_{k\in\mathbb{Z}}\Sigma^k(\iota^k)$ is projective in $\mathfrak{L}_F(\mathcal{C})$, which completes the proof.

\end{proof}

\begin{remark}\rm
The converse of Corollary \ref{cor:projtype} does not hold generally. Consider $F=DA\otimes_A-: A\textup{-mod}\longrightarrow A\textup{-mod}$, where $A$ is a finite-dimensional $\Bbbk$-algebra. Giraldo (\cite[Proposition 6]{Gir18}) showed that all the indecomposable projective in $\mathfrak{L}^b_{DA\otimes_A-}(A\textup{-mod})$ has the form
    $$
	\xymatrix{
	\cdots\ar@{~>}[r] & 0\ar@{~>}[r] & A\varepsilon \ar@{~>}[r] & D(\varepsilon A)\ar@{~>}[r] & 0\ar@{~>}[r] & \cdots
	}
	$$
where $\varepsilon$ is an idempotent of $A$, implying $D(\varepsilon A)$ is an injective $A$-module. However, for the identity functor $F=Id_{\mathcal{C}}$, where $\mathcal{C}$ is any abelian category, the converse indeed holds (cf. \cite[Exercise 2.2.1, p. 34]{Wei94}).

\end{remark}




\section{The (monoidal) category  \texorpdfstring{$\textup{End}(\mathcal{C})$}{eq:End(C)}, and the categories \texorpdfstring{$\mathfrak{L}(\mathcal{C})$}{eq:LF(C)} and \texorpdfstring{$\mathfrak{R}(\mathcal{C})$}{eq:RF(C)}}
\label{sec:LF(C)vsEnd(C)}
Within the $2$-category of categories, we introduce in this section two full sub-$2$-categories $\mathfrak{L}(\mathcal{C})$ and $\mathfrak{R}(\mathcal{C})$ whose objects are the categories $\mathfrak{L}_F(\mathcal{C})$ and $\mathfrak{R}_F(\mathcal{C})$, respectively, for each functor $F\in\textup{End}(\mathcal{C})$, where $\mathcal{C}$ is an additive category. We use the monoidal category structure of $\textup{End}(\mathcal{C})$ to explore the properties of $\mathfrak{L}_F(\mathcal{C})$ and $\mathfrak{R}_F(\mathcal{C})$, through of a natural correspondence from $\textup{End}(\mathcal{C})$ to $\mathfrak{L}(\mathcal{C})$ and similarly to $\mathfrak{R}(\mathcal{C})$. These developments lay the foundation for a new research area we call \emph{$F-$graduate categories over $\mathcal{C}$}, using the monoidal structure of $\textup{End}(\mathcal{C})$ and offering a novel perspective on these novel constructions. Further explorations and applications await in \cite{GP2}.

\subsection{The contravariant functors \texorpdfstring{$\mathfrak{L}$}{eq:L} and \texorpdfstring{$\mathfrak{R}$}{eq:R}}\label{subsec:natfunct}
Let us start showing that any natural transformation $\Psi:F\rightarrow G$ between endofunctors $F, G\in\textup{End}(\mathcal{C})$, induces the following covariant functor
    $$
    \begin{array}{rcl}
    \mathfrak{L}_{\Psi}(\mathcal{C}):\mathfrak{L}_G(\mathcal{C})&\longrightarrow & \mathfrak{L}_F(\mathcal{C})\\
   M \ \ \  &\longmapsto & \mathfrak{L}_{\Psi}(\mathcal{C})(M):=\left(M^n,d^n_M\circ\Psi_{M^n}\right)_{n\in\mathbb{Z}}\\
   \varphi \ \ \ \ &\longmapsto & \mathfrak{L}_{\Psi}(\mathcal{C})(\varphi):=\varphi
    \end{array}
    $$

We first establish the well-definedness of $\mathfrak{L}_{\Psi}(\mathcal{C})$. To achieve this, we show that for any $M=\left(M^n,d^n_M\right)_{n\in\mathbb{Z}}\in\mathfrak{L}_G(\mathcal{C})$, the corresponding object $\left(M^n,d^n_M\circ\Psi_{M^n}\right)_{n\in\mathbb{Z}}$ is in $\mathfrak{L}_F(\mathcal{C})$. In fact, we obtain the following sequence of equalities: 
    \begin{align*}
    d^{n+1}_M\circ\Psi_{M^{n+1}}\circ F\big(d^n_M\circ\Psi_{M^n}\big) 
    & = d^{n+1}_M\circ\left(\Psi_{M^{n+1}}\circ F\big(d^n_M\big)\right)\circ F\big(\Psi_{M^n}\big)\\
    & = d^{n+1}_M\circ\left(G\big(d^n_M\big)\circ \Psi_{F(M^{n})}\right)\circ F\big(\Psi_{M^n}\big)\\
    & = 0.
    \end{align*}
The final equality comes from the hypothesis that $M\in\mathfrak{L}_G(\mathcal{C})$. The second equality holds due to the commutativity of the following diagram:
    $$
    \xymatrix{
    F(G(M^n)) \ar[r]^{\Psi_{F(M^n)}}\ar[d]_{F(d_M^n)} & G(G(M^{n})) \ar[d]^{G(d_M^n)} \\
    F(M^{n+1})\ar[r]_{\Psi_{M^{n+1}}} & G(M^{n+1}), 
    }
    $$
guaranteed by the naturality of $\Psi:F\longrightarrow G$. Now, we need to prove that for each morphism $\varphi\in\textup{Hom}_{\mathfrak{L}_{G}(\mathcal{C})}\left(M,N\right)$, the corresponding morphism satisfy $\varphi\in\textup{Hom}_{\mathfrak{L}_{F}(\mathcal{C})}\left(\mathfrak{L}_{\Psi}(\mathcal{C})(M),\mathfrak{L}_{\Psi}(\mathcal{C})(N)\right)$. Indeed, this claim is established by commutativity of the following diagram:
    $$
    \xymatrix{
    F(M^n)\ar[r]^-{\Psi_{M^n}}\ar[d]_{F(\varphi^n)} & G(M^n)\ar[r]^-{d^n_M} \ar[d]_{G(\varphi^n)} & M^{n+1} \ar[d]^{\varphi^{n+1}} \\
    F(N^{n}) \ar[r]_-{\Psi_{N^n}} & G(N^n)\ar[r]_-{d^n_N} & N^{n+1},
    }
    $$
    for each $n\in\mathbb{Z}$, which is a direct consequence of the naturality of $\Psi:F\longrightarrow G$. Clearly, by definition, this assignation is functorial.

Similarly, from each natural transformation $\Psi:F\longrightarrow G$ between endofunctors $F, G\in\textup{End}(\mathcal{C})$, we can prove that induces a covariant functor
    $$
    \begin{array}{rcl}
    \mathfrak{R}_{\Psi}(\mathcal{C}):\mathfrak{R}_F(\mathcal{C})&\longrightarrow & \mathfrak{R}_G(\mathcal{C})\\
   M \ \ \  &\longmapsto & \mathfrak{R}_{\Psi}(\mathcal{C})(M):=\left(M^n,d^n_M\circ\Psi_{M^n}\right)_{n\in\mathbb{Z}}\\
   \varphi \ \ \ \ &\longmapsto & \mathfrak{R}_{\Psi}(\mathcal{C})(\varphi):=\varphi.
    \end{array}
    $$
In conclusion, we obtain the functors $\mathfrak{L}$ and $\mathfrak{R}$. The theorem below guaranteed that these functors are embeddings.


\begin{theorem}
\label{thm:LR}
Consider an additive category $\mathcal{C}$ and its associated (monoidal) category of endofunctors, $\textup{End}(\mathcal{C})$. The functors $\mathfrak{L}$ and $\mathfrak{R}$ establish embeddings. Specifically, the functors $\mathfrak{L}$ and $\mathfrak{R}$, defined as follows, are a faithful contravariant functor and a faithful covariant functor, respectively:
    $$
    \begin{array}{cclccccccl}
    \mathfrak{L}:\textup{End}(\mathcal{C}) & \longrightarrow & \mathfrak{L}(\mathcal{C}) & & & & & \mathfrak{R}:\textup{End}(\mathcal{C})&\longrightarrow & \mathfrak{R}(\mathcal{C})\\
    \ \ \ F & \longmapsto & \mathfrak{L}_{F}(\mathcal{C}) &&&&& \ \ \ F &\longmapsto & \mathfrak{R}_F(\mathcal{C})\\
    \ \ \ \Psi &\longmapsto & \mathfrak{L}_{\Psi}(\mathcal{C}) &&&&& \ \ \ \Psi &\longmapsto & \mathfrak{R}_{\Psi}(\mathcal{C}). 
    \end{array}
    $$
\end{theorem}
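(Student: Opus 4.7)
The excerpt has already verified that each natural transformation $\Psi: F \to G$ in $\textup{End}(\mathcal{C})$ yields well-defined functors $\mathfrak{L}_\Psi(\mathcal{C})$ and $\mathfrak{R}_\Psi(\mathcal{C})$. What remains is (i) functoriality of the assignments $F \mapsto \mathfrak{L}_F(\mathcal{C})$, $\Psi \mapsto \mathfrak{L}_\Psi(\mathcal{C})$ with the reversed variance, and the covariant analogue for $\mathfrak{R}$, and (ii) faithfulness on hom-sets. I would take these in turn.

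For (i), functoriality reduces to a direct computation with the defining formulas. The identity natural transformation $1_F$ has components $1_{F(X)}$, so $d^n_M \circ 1_{F(M^n)} = d^n_M$, showing $\mathfrak{L}_{1_F}(\mathcal{C}) = \mathrm{Id}_{\mathfrak{L}_F(\mathcal{C})}$. Given a composable pair $F \xrightarrow{\Psi} G \xrightarrow{\Phi} H$ and $M \in \mathfrak{L}_H(\mathcal{C})$, applying $\mathfrak{L}_\Phi(\mathcal{C})$ and then $\mathfrak{L}_\Psi(\mathcal{C})$ produces in degree $n$ the differential $(d^n_M \circ \Phi_{M^n}) \circ \Psi_{M^n} = d^n_M \circ (\Phi \circ \Psi)_{M^n}$, which matches $\mathfrak{L}_{\Phi \circ \Psi}(\mathcal{C})(M)$; since both functors act as the identity on morphisms, this establishes $\mathfrak{L}_{\Phi \circ \Psi}(\mathcal{C}) = \mathfrak{L}_\Psi(\mathcal{C}) \circ \mathfrak{L}_\Phi(\mathcal{C})$, i.e.\ contravariance. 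The parallel computation for $\mathfrak{R}$, in which $\Psi$ now enters on the opposite side of the differential, yields $\mathfrak{R}_{\Phi \circ \Psi}(\mathcal{C}) = \mathfrak{R}_\Phi(\mathcal{C}) \circ \mathfrak{R}_\Psi(\mathcal{C})$, confirming covariance.

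For (ii), my strategy is to construct, for each $X \in \mathcal{C}$, a ``stalk'' test complex that exhibits the component $\Psi_X$ directly as a differential after applying $\mathfrak{L}_\Psi(\mathcal{C})$. Assume $\Psi, \Psi': F \to G$ satisfy $\mathfrak{L}_\Psi(\mathcal{C}) = \mathfrak{L}_{\Psi'}(\mathcal{C})$ as functors, and consider $M := \Sigma^0(1_{G(X)}) \in \mathfrak{L}_G(\mathcal{C})$ with $M^0 = X$, $M^1 = G(X)$, $d^0_M = 1_{G(X)}$, all other terms and differentials zero (the complex condition holds vacuously). Then the degree-zero differential of $\mathfrak{L}_\Psi(\mathcal{C})(M)$ is $1_{G(X)} \circ \Psi_X = \Psi_X$, so equality of the two functors on this single object forces $\Psi_X = \Psi'_X$; letting $X$ range over $\mathcal{C}$ delivers $\Psi = \Psi'$. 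For $\mathfrak{R}$, the dual stalk with $M^0 = F(X)$, $M^1 = X$, $d^0_M = 1_{F(X)}$ in $\mathfrak{R}_F(\mathcal{C})$ exhibits $\Psi_X$ in degree zero of $\mathfrak{R}_\Psi(\mathcal{C})(M)$ by the same mechanism.

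The principal point requiring attention, more clerical than conceptual, is the bookkeeping of variance: one must track that $\mathfrak{L}_\Psi$ swaps the roles of source and target category while $\mathfrak{R}_\Psi$ preserves them, so that the composition rule reverses in one case and not in the other. Once the stalks $\Sigma^0(1_{G(X)})$ and $\Sigma^0(1_{F(X)})$ are in place, faithfulness is immediate because these objects isolate each component $\Psi_X$ within a single differential; the only real work lies in pinning down the correct target category for the test object, which is determined by the variance of the functor under consideration.
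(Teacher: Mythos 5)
Your proposal is correct and follows essentially the same route as the paper: faithfulness is established there by exactly your ``stalk'' argument, applying $\mathfrak{L}_{\Psi}(\mathcal{C})$ to the test object $\Sigma^0\big(1_{G(X)}\big)$ to recover $\Sigma^0(\Psi_X)$ and hence each component $\Psi_X$. Your explicit verification of the identity and composition laws (and of the reversal of variance for $\mathfrak{L}$ versus $\mathfrak{R}$) is left implicit in the paper, but it is routine and you carry it out correctly.
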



\begin{proof}
We provide a proof for the functor $\mathfrak{L}$, noting that an analogous argument holds for the functor $\mathfrak{R}$. Let $\Psi,\Phi\in\textup{Hom}_{\textup{End}(\mathcal{C})}(F,G)$ be two natural transformations such that $\mathfrak{L}_{\Psi}(\mathcal{C})=\mathfrak{L}_{\Phi}(\mathcal{C})\in\textup{Hom}_{\mathfrak{L}(\mathcal{C})}\big(\mathfrak{L}_{G}(\mathcal{C}),\mathfrak{L}_{F}(\mathcal{C})\big)$. In particular, it follows that $\mathfrak{L}_{\Psi}(\mathcal{C})(M)=\mathfrak{L}_{\Phi}(\mathcal{C})(M)$, for any object $M\in\mathfrak{L}_{G}(\mathcal{C})$. Following the notation in Theorem \ref{proj&inj}, we obtain that
    $$
    \Sigma^0\left(\Psi_M\right) =\mathfrak{L}_{\Psi}(\mathcal{C})\left(\Sigma^0\left(1_{G(M)}\right)\right) 
    =\mathfrak{L}_{\Phi}(\mathcal{C})\left(\Sigma^0\left(1_{G(M)}\right)\right)
    =\Sigma^0\left(\Phi_M\right), \ \ \ \text{for all $M\in\mathcal{C}$.}
    $$
Therefore,  $\Psi_M=\Phi_M$ for all $M\in\mathcal{C}$, which completes the proof.

\end{proof}


\begin{remark}\rm
\label{rem:LR}    
Theorem \ref{thm:LR} implies an important property: the categories $\mathfrak{L}_{F}(\mathcal{C})$ and $\mathfrak{R}_F(\mathcal{C})$ are invariant by natural isomorphism. In more precise terms, given two naturally isomorphic endofunctors $F,G\in\textup{End}(\mathcal{C})$, we obtain natural isomorphisms between their associated categories: $\mathfrak{L}_{F}(\mathcal{C})\cong \mathfrak{L}_{G}(\mathcal{C})$ and $\mathfrak{R}_F(\mathcal{C})\cong \mathfrak{R}_G(\mathcal{C})$.
\end{remark}

To establish the following corollaries, 
To the next corollaries, we invoke the result from Example \ref{example:L_F(C)}(i) which asserts that the categories $\mathfrak{L}_{1_\mathcal{C}}(\mathcal{C})$ and $\mathfrak{R}_{1_\mathcal{C}}(\mathcal{C})$ coincide with the category of cochain complexes of $\mathcal{C}$, denote here by $\textup{Ch}(\mathcal{C})$.


\begin{corollary}
\label{cor:LRequiv}
If $F:\mathcal{C}\longrightarrow \mathcal{C}$ is a categories equivalence with inverse functor $G:\mathcal{C}\longrightarrow \mathcal{C}$, then the categories  $\textup{Ch}(\mathcal{C})\cong \mathfrak{L}_{FG}(\mathcal{C})\cong \mathfrak{L}_{GF}(\mathcal{C})\cong \mathfrak{R}_{FG}(\mathcal{C})\cong \mathfrak{R}_{GF}(\mathcal{C})$ are  isomorphics.
\end{corollary}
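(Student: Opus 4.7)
The plan is to reduce everything to the identity endofunctor and then invoke the two stated results that together immediately yield the chain of isomorphisms. The key observation is that if $F\colon\mathcal{C}\to\mathcal{C}$ is an equivalence with inverse $G$, then by definition of inverse functor (quasi-inverse) one has natural isomorphisms
\begin{equation*}
FG \;\cong\; 1_{\mathcal{C}} \;\cong\; GF
\end{equation*}
in $\textup{End}(\mathcal{C})$. So the four endofunctors $FG$ and $GF$ are each naturally isomorphic to $1_{\mathcal{C}}$.

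Next I would invoke Remark~\ref{rem:LR}, which asserts precisely the invariance of the categories $\mathfrak{L}_{H}(\mathcal{C})$ and $\mathfrak{R}_{H}(\mathcal{C})$ under natural isomorphism of the endofunctor $H$. Applying this to the isomorphisms above yields
\begin{equation*}
\mathfrak{L}_{FG}(\mathcal{C}) \;\cong\; \mathfrak{L}_{1_{\mathcal{C}}}(\mathcal{C}) \;\cong\; \mathfrak{L}_{GF}(\mathcal{C})
\quad\text{and}\quad
\mathfrak{R}_{FG}(\mathcal{C}) \;\cong\; \mathfrak{R}_{1_{\mathcal{C}}}(\mathcal{C}) \;\cong\; \mathfrak{R}_{GF}(\mathcal{C}).
\end{equation*}

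Finally, by Example~\ref{example:L_F(C)}(i) (and its right-handed analogue pointed out in Remark~\ref{rem}(i)), when the endofunctor is the identity, both $\mathfrak{L}_{1_{\mathcal{C}}}(\mathcal{C})$ and $\mathfrak{R}_{1_{\mathcal{C}}}(\mathcal{C})$ coincide with $\textup{Ch}(\mathcal{C})$. Stringing the isomorphisms through the common middle term $\textup{Ch}(\mathcal{C})=\mathfrak{L}_{1_{\mathcal{C}}}(\mathcal{C})=\mathfrak{R}_{1_{\mathcal{C}}}(\mathcal{C})$ produces the desired chain. There is essentially no obstacle: the only thing one should take care of is being explicit about which direction the induced functors go, since $\mathfrak{L}$ is contravariant in its functor argument (Theorem~\ref{thm:LR}) while $\mathfrak{R}$ is covariant, but since natural isomorphisms invert trivially this asymmetry causes no issue. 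The whole proof is therefore a one-line composition of Remark~\ref{rem:LR} with Example~\ref{example:L_F(C)}(i).
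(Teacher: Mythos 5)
Your proposal is correct and is exactly the argument the paper intends: the corollary is stated immediately after Remark~\ref{rem:LR} and the identification $\mathfrak{L}_{1_\mathcal{C}}(\mathcal{C})=\mathfrak{R}_{1_\mathcal{C}}(\mathcal{C})=\textup{Ch}(\mathcal{C})$ from Example~\ref{example:L_F(C)}(i), and the paper leaves the one-line composition of these two facts (via $FG\cong 1_{\mathcal{C}}\cong GF$) to the reader. Your added remark about the contravariance of $\mathfrak{L}$ versus the covariance of $\mathfrak{R}$ being harmless for isomorphisms is a correct and welcome clarification.
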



\begin{corollary}
\label{cor:LR} 
Let $F,G:\mathcal{C}\longrightarrow \mathcal{C}$ be two endofunctors such that $(F, G)$ is an adjoint pair, where $\eta:1_{\mathcal{C}}\longrightarrow GF$ and  $\varepsilon:FG\longrightarrow 1_{\mathcal{C}}$ are the unit and counit, respectively. Then, the following functors 
    $$
    \xymatrix{
    \mathfrak{L}_{GF}(\mathcal{C})\ar[rr]^-{\mathfrak{L}_{\eta\varepsilon}(\mathcal{C})}\ar[rd]_-{\mathfrak{L}_{\eta}(\mathcal{C})} & & \mathfrak{L}_{FG}(\mathcal{C}) & \mathfrak{L}_{F}(\mathcal{C}) \ar[d]^-{\mathfrak{L}_{\varepsilon F}(\mathcal{C})} & \mathfrak{L}_{G}(\mathcal{C}) \ar[d]^-{\mathfrak{L}_{G\varepsilon}(\mathcal{C})} & \mathfrak{R}_{F}(\mathcal{C}) \ar[d]^-{\mathfrak{R}_{F\eta}(\mathcal{C})} & \mathfrak{R}_{G}(\mathcal{C}) \ar[d]^-{\mathfrak{R}_{\eta G}(\mathcal{C})} \\
    & \textup{Ch}(\mathcal{C}) \ar[ru]_-{\mathfrak{L}_{\varepsilon}(\mathcal{C})}\ar[rd]^-{\mathfrak{R}_{\eta}(\mathcal{C})}&  & \mathfrak{L}_{FGF}(\mathcal{C}) \ar[d]^-{\mathfrak{L}_{F\eta}(\mathcal{C})} & \mathfrak{L}_{GFG}(\mathcal{C}) \ar[d]^-{\mathfrak{L}_{\eta G}(\mathcal{C})} & \mathfrak{R}_{FGF}(\mathcal{C}) \ar[d]^-{\mathfrak{R}_{\varepsilon F}(\mathcal{C})} & \mathfrak{R}_{GFG}(\mathcal{C}) \ar[d]^-{\mathfrak{R}_{G\varepsilon}(\mathcal{C})} \\
    \mathfrak{R}_{FG}(\mathcal{C})\ar[rr]_-{\mathfrak{R}_{\eta\varepsilon}(\mathcal{C})}\ar[ru]^-{\mathfrak{R}_{\varepsilon}(\mathcal{C})} & & \mathfrak{R}_{GF}(\mathcal{C}) & \mathfrak{L}_{F}(\mathcal{C}) & \mathfrak{L}_{G}(\mathcal{C}) & \mathfrak{R}_{F}(\mathcal{C}) & \mathfrak{R}_{G}(\mathcal{C}) 
    }
    $$
and the vertical composition are the identity transformations, respectively. 
\end{corollary}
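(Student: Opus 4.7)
The plan is to reduce the corollary to two ingredients: (i) functoriality of $\mathfrak{L}$ and $\mathfrak{R}$ under vertical composition of natural transformations, which is essentially implicit in the construction of Subsection~\ref{subsec:natfunct} and underlies Theorem~\ref{thm:LR}; and (ii) the triangle identities of the adjunction $(F,G)$, namely $(\varepsilon F)\circ(F\eta)=1_F$ and $(G\varepsilon)\circ(\eta G)=1_G$. Each of the seven diagrams in the statement then follows by a direct application of these two ingredients, using also the identification $\textup{Ch}(\mathcal{C})=\mathfrak{L}_{1_{\mathcal{C}}}(\mathcal{C})=\mathfrak{R}_{1_{\mathcal{C}}}(\mathcal{C})$ from Example~\ref{example:L_F(C)}(i).

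First I would record the functoriality in (i) explicitly: given natural transformations $\Phi:F\longrightarrow G$ and $\Psi:G\longrightarrow H$ between endofunctors of $\mathcal{C}$, the identities
$$\mathfrak{L}_{\Psi\circ\Phi}(\mathcal{C})=\mathfrak{L}_{\Phi}(\mathcal{C})\circ\mathfrak{L}_{\Psi}(\mathcal{C})\quad\text{and}\quad\mathfrak{L}_{1_F}(\mathcal{C})=1_{\mathfrak{L}_F(\mathcal{C})}$$
hold by unwinding the definition, since on an object $M$ both sides produce the differential $d^n_M\circ\Psi_{M^n}\circ\Phi_{M^n}$, while morphisms are sent to themselves; the analogous (covariant) relations hold for $\mathfrak{R}$. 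Interpreting the symbol $\eta\varepsilon$ as the vertical composition $FG\stackrel{\varepsilon}{\longrightarrow}1_{\mathcal{C}}\stackrel{\eta}{\longrightarrow}GF$, contravariance of $\mathfrak{L}$ then yields the left triangular factorization $\mathfrak{L}_{\eta\varepsilon}(\mathcal{C})=\mathfrak{L}_{\varepsilon}(\mathcal{C})\circ\mathfrak{L}_{\eta}(\mathcal{C})$ through $\textup{Ch}(\mathcal{C})$, and covariance of $\mathfrak{R}$ dually gives $\mathfrak{R}_{\eta\varepsilon}(\mathcal{C})=\mathfrak{R}_{\eta}(\mathcal{C})\circ\mathfrak{R}_{\varepsilon}(\mathcal{C})$.

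For each of the four remaining columns, apply $\mathfrak{L}$ or $\mathfrak{R}$ directly to a triangle identity. For instance, $\mathfrak{L}_{F\eta}(\mathcal{C})\circ\mathfrak{L}_{\varepsilon F}(\mathcal{C})=\mathfrak{L}_{(\varepsilon F)\circ(F\eta)}(\mathcal{C})=\mathfrak{L}_{1_F}(\mathcal{C})=1_{\mathfrak{L}_F(\mathcal{C})}$; the $\mathfrak{L}_G(\mathcal{C})$ column uses $(G\varepsilon)\circ(\eta G)=1_G$; and the two $\mathfrak{R}$-columns use the same triangle identities composed in covariant order, i.e. $\mathfrak{R}_{\varepsilon F}(\mathcal{C})\circ\mathfrak{R}_{F\eta}(\mathcal{C})=\mathfrak{R}_{1_F}(\mathcal{C})$ and $\mathfrak{R}_{G\varepsilon}(\mathcal{C})\circ\mathfrak{R}_{\eta G}(\mathcal{C})=\mathfrak{R}_{1_G}(\mathcal{C})$. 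The main obstacle is purely bookkeeping: one must carefully distinguish vertical composition of natural transformations from whiskering, and consistently track the arrow reversal caused by the contravariance of $\mathfrak{L}$ versus the covariance of $\mathfrak{R}$; once this is set up, the proof contains no genuine categorical difficulty beyond the material already established in Theorem~\ref{thm:LR}.
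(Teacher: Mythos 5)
Your proposal is correct and follows essentially the same route as the paper: the paper's own proof is a one-line appeal to Theorem~\ref{thm:LR} together with the triangle identities $(\varepsilon F)\circ(F\eta)=1_F$ and $(G\varepsilon)\circ(\eta G)=1_G$. You merely make explicit the (contravariant/covariant) functoriality identities $\mathfrak{L}_{\Psi\circ\Phi}(\mathcal{C})=\mathfrak{L}_{\Phi}(\mathcal{C})\circ\mathfrak{L}_{\Psi}(\mathcal{C})$ and $\mathfrak{L}_{1_F}(\mathcal{C})=1_{\mathfrak{L}_F(\mathcal{C})}$ that the paper leaves implicit, which is a faithful elaboration rather than a different argument.
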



\begin{proof}
It is follows from Theorem~\ref{thm:LR} and the triangle identities, i.e., the compositions
    $$
    \xymatrix{
    F\ar[r]^-{F\eta}&FGF\ar[r]^{\varepsilon F} & F & \text{and} & G\ar[r]^-{\eta G}&GFG\ar[r]^{G\varepsilon} & G 
    }
    $$
are the identity transformations  respectively.

\end{proof}

\begin{theorem}\label{thm:adjoint}
    Let $F,G:\mathcal{C}\longrightarrow \mathcal{C}$ be endofunctors such that $(F, G)$ is an adjoint pair. Then, there exists an isomorphism of categories
     $$
    \begin{array}{rcl}
    \mathcal{E}:\mathfrak{L}_F(\mathcal{C})&\longrightarrow & \mathfrak{R}_G(\mathcal{C})\\
   M=(M^n,f_n)_{n\in\mathbb{Z}} \ \ \  &\longmapsto & \mathcal{E}(M):=\left(M^n,g_n\right)_{n\in\mathbb{Z}}\\
   \varphi=(\varphi^n) \ \ \ \ &\longmapsto & \mathcal{E}(\varphi):=\varphi
    \end{array}
    $$
where $g_n=\tau_{n,n+1}(f_n)$ is the unique morphism $\tau_{n,n+1}:\textup{Hom}_{\mathcal{C}}\left(F(M^n),M^{n+1}\right)\longrightarrow\textup{Hom}_{\mathcal{C}}\left(M^n,G(M^{n+1})\right)$ under the natural group isomorphism induced by adjoitness, for any $n\in\mathbb{Z}$.
\end{theorem}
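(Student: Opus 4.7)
The plan is to exploit the naturality of the adjunction bijection
$\tau = \tau_{X,Y}:\textup{Hom}_{\mathcal{C}}(F(X),Y)\longrightarrow\textup{Hom}_{\mathcal{C}}(X,G(Y))$
in both variables. Writing $\tau(\alpha)=G(\alpha)\circ\eta_X$ for $\alpha:F(X)\to Y$, naturality in the codomain gives $\tau(\beta\alpha)=G(\beta)\tau(\alpha)$ for any $\beta:Y\to Y'$, while naturality in the domain gives $\tau(\alpha\circ F(\gamma))=\tau(\alpha)\circ\gamma$ for any $\gamma:X'\to X$. Together with the fact that $\tau$ is an additive bijection (so sends $0$ to $0$), these identities are essentially the only ingredients required.

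First, I would verify that $\mathcal{E}(M)=(M^n,g_n)_{n\in\mathbb{Z}}$ lies in $\mathfrak{R}_G(\mathcal{C})$, i.e.\ that $G(g_{n+1})\circ g_n=0$. Applying $\tau$ to the defining relation $f_{n+1}\circ F(f_n)=0$ (a morphism $F^2(M^n)\to M^{n+2}$) and using naturality of $\eta$ at $f_n:F(M^n)\to M^{n+1}$ yields
\[
\tau(f_{n+1}F(f_n))=G(f_{n+1})\,GF(f_n)\,\eta_{F(M^n)}=G(f_{n+1})\,\eta_{M^{n+1}}\,f_n=g_{n+1}f_n.
\]
A second application of $\tau$ to $g_{n+1}f_n\in\textup{Hom}_{\mathcal{C}}(F(M^n),G(M^{n+2}))$ produces $G(g_{n+1})\circ g_n$. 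Hence $G(g_{n+1})g_n$ is the image of $0$ under the composed bijection, and so vanishes.

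Second, for a morphism $\varphi=(\varphi^n):M\to N$ in $\mathfrak{L}_F(\mathcal{C})$, I would check that the same family $(\varphi^n)$ satisfies the $\mathfrak{R}_G$-commutation $G(\varphi^{n+1})g^M_n=g^N_n\varphi^n$. Starting from $\varphi^{n+1}f^M_n=f^N_n F(\varphi^n)$ and applying $\tau$, naturality in the codomain transforms the left side into $G(\varphi^{n+1})g^M_n$, while naturality in the domain transforms the right side into $g^N_n\varphi^n$; functoriality of $\mathcal{E}$ is then automatic since $\mathcal{E}$ is the identity on underlying morphisms. An inverse $\mathcal{E}^{-1}:\mathfrak{R}_G(\mathcal{C})\to\mathfrak{L}_F(\mathcal{C})$ is defined by sending $(N^n,h_n)$ to $(N^n,\tau^{-1}(h_n))$, with $\tau^{-1}(\beta)=\varepsilon_Y\circ F(\beta)$; the symmetric naturality argument shows it is well defined. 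Since $\tau$ and $\tau^{-1}$ are mutually inverse on each hom-set, $\mathcal{E}\circ\mathcal{E}^{-1}$ and $\mathcal{E}^{-1}\circ\mathcal{E}$ are strict identities on objects, and both functors act as the identity on morphisms, giving the isomorphism of categories.

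The only real obstacle is bookkeeping: one must keep careful track of which hom-set each invocation of $\tau$ refers to when iterating the bijection to pass between $\textup{Hom}(F^2(-),-)$ and $\textup{Hom}(-,G^2(-))$, and verify that the two naturality squares fit together coherently. Beyond that, the argument is a direct consequence of the triangle identities and requires no extra hypotheses on $\mathcal{C}$ beyond additivity.
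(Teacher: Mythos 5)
Your proposal is correct and follows essentially the same route as the paper: both arguments reduce everything to the two naturality identities of the adjunction bijection $\tau$ (applied once in the domain variable to get $\tau(f_{n+1}\circ F(f_n))=g_{n+1}\circ f_n$ and once in the codomain variable to get $G(g_{n+1})\circ g_n=\tau(g_{n+1}\circ f_n)$), then define the inverse via $\tau^{-1}$. Your handling of the morphism condition by hitting $\varphi^{n+1}f^M_n=f^N_nF(\varphi^n)$ directly with $\tau$ is marginally cleaner than the paper's uniqueness argument through the unit and counit, but it is the same underlying idea.
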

\begin{proof}
Under the hypothesis that $f_{n+1}\circ F(f_n)=0$, for all $n\in\mathbb{Z}$, we need to prove that $G(g_{n+1})\circ g_n=0$. Indeed, the equation $f_{n+1}\circ F(f_n)=0$ means that $F(f_n)^{\ast}(f_{n+1})=0$. In particular, $\tau_{n,n+1}(F(f_n)^{\ast}(f_{n+1}))=0$, and by naturality, this equation is equal to $f_n^{\ast}(g_{n+1})=g_{n+1}\circ f_n=0$, equivalently, $\left(g_{n+1}\right)_{\ast}(f_n)=0$. Now, since $G(g_{n+1})\circ g_n$ is equal to $G(g_{n+1})_{\ast}(g_n)$, it follows that by naturality, $G(g_{n+1})_{\ast}(g_n)=G(g_{n+1})_{\ast}(\tau_{n,n+1}(f_n))=\tau_{n,n+1}\left((g_{n+1})_{\ast}(f_n)\right)$. Thus, from the equality $\tau_{n,n+1}\left((g_{n+1})_{\ast}(f_n)\right)=\tau_{n,n+1}(0)=0$, we conclude that $G(g_{n+1})\circ g_n=0$, for all $n\in \mathbb{Z}$.

Next, let $\varphi=(\varphi^n):(M_n,f_n)\longrightarrow (M'_n,f'_n)$ be a collection of morphisms satisfying $f'_n\circ F(\varphi^n)=\varphi^{n+1}\circ f_n$, for all $n\in\mathbb{Z}$. We need to show that $\varphi=(\varphi^n):(M_n,g_n)\longrightarrow (M'_n,g'_n)$ is such that $g'_n\circ\varphi^n=G(\varphi^{n+1})\circ g_n$. By the adjointness, there exist a co-unity morphisms $\epsilon_{n+1}:F(G(M_{n+1}))\longrightarrow M_{n+1}$ and unity morphisms $\eta_n:M_n\longrightarrow G(F(M_n))$, which relates (by uniqueness) the morphisms $f_n$ and $g_n$ (resp. $f'_n$ and $g'_n$) under the equations $\epsilon_{n+1}\circ F(g_n)=f_n$ and $G(f_n)\circ \eta_n=g_n$ (resp. $\epsilon_{n+1}\circ F(g'_n)=f'_n$ and $G(f'_n)\circ \eta_n=g'_n$). From the equation $f'_n\circ F(\varphi^n)=\varphi^{n+1}\circ f_n$ and substituting by $\epsilon_{n+1}\circ F(g'_n)=f'_n$, we obtain that $\epsilon_{n+1}\circ F(g'_n\circ\varphi^n)=\varphi^{n+1}\circ f_n$. Thus, by uniqueness, $G(\varphi^{n+1}\circ f_n)\circ\eta_n=g'_n\circ\varphi^n$, i.e., $G(\varphi^{n+1})\circ \left(G(f_n)\circ \eta_n\right)=g'_n\circ\varphi^n$ and substituting by $G(f_n)\circ \eta_n=g_n$ in this latter equation we conclude that $g'_n\circ\varphi^n=G(\varphi^{n+1})\circ g_n$, for all $n\in\mathbb{Z}$.

Since the definition of $\mathcal{E}$ depends uniquely and naturally on the properties of adjointness, it is easy to verify that under the same conditions as above, the functor $\mathcal{R}: \mathfrak{R}_G(\mathcal{C})\longrightarrow\mathfrak{L}_F(\mathcal{C})$, which sends $(M^n,g_n)$ to $(M^n,f_n)$ and $\varphi\longmapsto\varphi$, is well-defined and satisfies $\mathcal{E}\circ\mathcal{R}=Id_{\mathfrak{R}_G(\mathcal{C})}$, $\mathcal{R}\circ\mathcal{E}=Id_{\mathfrak{L}_F(\mathcal{C})}$. This completes the proof of the theorem.

\end{proof}

\begin{corollary}\label{cor:adjconsq}
Let $\mathcal{C}$ be a complete and co-complete abelian category and let $(F, G)$ be an adjoint pair of functors, where $F,G:\mathcal{C}\longrightarrow \mathcal{C}$. Then, $\mathfrak{L}_F(\mathcal{C})$ is a complete and co-complete abelian category.   
\end{corollary}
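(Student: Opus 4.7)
The plan is to combine the three earlier theorems in a dual fashion, exploiting that a left adjoint preserves all colimits and a right adjoint preserves all limits.

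First I would establish abelianity and co-completeness of $\mathfrak{L}_F(\mathcal{C})$ directly. Since $F$ is a left adjoint, it preserves cokernels, so Theorem~\ref{thm:abelian}(iii) immediately gives that $\mathfrak{L}_F(\mathcal{C})$ is abelian. Likewise, $F$ preserves all small colimits, in particular coproducts and coequalizers, so the co-complete half of Theorem~\ref{thm:prod and eq} together with the standard characterization (a category is co-complete iff it has all coproducts and coequalizers) shows that $\mathfrak{L}_F(\mathcal{C})$ is co-complete.

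For completeness, the natural temptation is to apply Theorem~\ref{thm:prod and eq} directly, but $F$ being a left adjoint does not in general preserve products. Instead I would transport the problem to the $\mathfrak{R}$-side via Theorem~\ref{thm:adjoint}, which furnishes an isomorphism of categories $\mathcal{E}\colon \mathfrak{L}_F(\mathcal{C})\longrightarrow\mathfrak{R}_G(\mathcal{C})$. Now $G$ is a right adjoint, hence it preserves kernels and products. Applying the $\mathfrak{R}$-versions of Theorem~\ref{thm:abelian}(v) and Theorem~\ref{thm:prod and eq} (the latter's analogue for $\mathfrak{R}$ being recorded in Remark~\ref{rem:pullback}(i)), one obtains that $\mathfrak{R}_G(\mathcal{C})$ is abelian and complete. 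Pulling back along the isomorphism $\mathcal{E}$, $\mathfrak{L}_F(\mathcal{C})$ is complete as well.

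The main obstacle is simply bookkeeping: making sure to use the left adjoint $F$ when arguing co-completeness on the $\mathfrak{L}$-side and the right adjoint $G$ when arguing completeness on the $\mathfrak{R}$-side, and then transferring the latter back through $\mathcal{E}$. As a sanity check one can verify consistency by noting that the two abelian structures on $\mathfrak{L}_F(\mathcal{C})$ and $\mathfrak{R}_G(\mathcal{C})$ coincide under $\mathcal{E}$, since $\mathcal{E}$ is the identity on the underlying graded families of objects and on morphisms, so limits and colimits are matched degreewise through the adjoint correspondence $\tau_{n,n+1}$.
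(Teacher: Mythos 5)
Your proposal is correct and follows essentially the same route as the paper: abelianity and co-completeness of $\mathfrak{L}_F(\mathcal{C})$ come from $F$ preserving colimits (hence cokernels and coproducts), and completeness is obtained by transporting the problem to $\mathfrak{R}_G(\mathcal{C})$ via the isomorphism of Theorem~\ref{thm:adjoint} and using that the right adjoint $G$ preserves limits. Your citation of Theorem~\ref{thm:prod and eq} for the (co-)completeness step is, if anything, the more precise reference than the paper's.
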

\begin{proof}
It is well-known that $F$ preserves co-limits and $G$ preserves limits, since $(F, G)$ is an adjoint pair. By Theorem \ref{thm:completness} and Theorem \ref{thm:abelian}, we obtain that $\mathfrak{L}_F(\mathcal{C})$ is a co-complete abelian category and 
$\mathfrak{R}_G(\mathcal{C})$ is a complete (abelian) category. Therefore, by Theorem \ref{thm:adjoint}, we conclude that $\mathfrak{L}_F(\mathcal{C})$ is a complete category also.

\end{proof}


\subsection{Monoidal categories and its relations with  \texorpdfstring{$\mathfrak{L}_F(\mathcal{C})$}{eq:LFC} and \texorpdfstring{$\mathfrak{R}_F(\mathcal{C})$}{eq:RFC}}\label{subsec:5}
In this subsection, we will analyze the influence of the monoidal structure of $\text{End}(\mathcal{C})$ on the category  \texorpdfstring{$\mathfrak{L}_F(\mathcal{C})$}. We follow the notations and results in \cite{EGNO15}. For a complete study of the main properties of monoidal categories, we recommend \cite[Chapter 2, pp. 21-32]{EGNO15}. Some examples of strict monoidal categories that we will be interested in include:
\begin{itemize}
\item $A$\textup{-biMod}: This is the category of $A$-bimodules or $A$-$A$-modules, whose ``$\otimes$ product'' is the tensor product $\otimes_A$ with unity $A$.
\item For any (abelian) $\Bbbk$-category $\mathcal{C}$, the category $\text{End}(\mathcal{C})$ of $\Bbbk$-linear endofunctors has a tensor product ``$\otimes$'' that corresponds to the composition of functors, with the identity functor $Id_{\mathcal{C}}$ as the unity. In this case, the tensor product of two natural transformations $\alpha:F\longrightarrow G$ and $\beta:F'\longrightarrow G'$ is the natural transformation $\alpha\otimes\beta:F\circ F'\longrightarrow G\circ G'$ defined by 
$$ 
(\alpha\otimes\beta)(M):=\alpha(G'(M))\circ F(\beta(M))=G(\beta(M))\circ\alpha(F'(M)),\,\forall\,M\in\mathcal{C}.
$$
\end{itemize}
Moreover, from the latter example above, we have important families of strict monoidal subcategories (see \cite[Ex. 2.3.12, p.28 and Sec. 2.7 p.35]{EGNO15}):
\begin{itemize}
\item $\text{End}_{rexact}(\mathcal{C}):$with objects right exact endofunctors and morphisms natural transformations.
\item $\text{End}_{lexact}(\mathcal{C}):$with objects left exact endofunctors and morphisms natural transformations.
\item $\text{End}_{exact}(\mathcal{C}):$with objects exact endofunctors and morphisms natural transformations.
\item $\text{Aut}(\mathcal{C}):$ whose objects are auto-equivalences and morphisms are isomorphisms of functors. 
\end{itemize}
If $\mathcal{C}$ is an abelian $\Bbbk$-category over a commutative ring $\Bbbk$ with unity, then the monoidal category of $\Bbbk$-linear exact endofunctors $\text{End}_{exact}(\mathcal{C})$ is {\it rigid} (see \cite[Sec. 2.10]{EGNO15}). Consequently, the subcategory $\text{Aut}(\mathcal{C})$ of $\Bbbk$-linear equivalences is also rigid. This is because any exact $\Bbbk$-linear endofunctor over an abelian $\Bbbk$-category has left and right adjoint endofunctors. This implies that the correspondence between left (resp. right) adjoints and left (resp. right) duals in the monoidal category $\text{End}(\mathcal{C})$ holds (see \cite[Ex. 2.10.4, p. 41]{EGNO15}). In particular, $\text{Aut}(\mathcal{C})$ is a {\it categorical group} (see \cite[Def. 2.11.4]{EGNO15}), as can be easily deduced from its definition and the preceding remarks.

We are now prepared to address the following problem. From the functor in Theorem \ref{thm:LR}
\begin{eqnarray*}
\mathfrak{L}:\text{End}(\mathcal{C})&\longrightarrow & \mathfrak{L}(\mathcal{C})\\
F &\longmapsto & \mathfrak{L}_F(\mathcal{C})\\
\Psi:F\rightarrow G &\longmapsto & \mathfrak{L}_{\Psi}:\mathfrak{L}_G(\mathcal{C})\rightarrow\mathfrak{L}_F(\mathcal{C})
\end{eqnarray*}
we consider the full and faithful subcategory $\text{Gr}(\mathcal{C}):=$Im$\mathfrak{L}$ of $\mathfrak{L}(\mathcal{C})$, the \emph{full image of the functor} $\mathfrak{L}$. The category $\text{Gr}(\mathcal{C})$ can be endowed with a strict monoidal structure as follows. We define $\widetilde{\otimes}$ by the functor
\begin{eqnarray*}
\widetilde{\otimes}: \text{Gr}(\mathcal{C})\times\text{Gr}(\mathcal{C})&\longrightarrow & \text{Gr}(\mathcal{C})\\
\left(\mathfrak{L}_F(\mathcal{C}),\mathfrak{L}_G(\mathcal{C})\right) &\longmapsto & \mathfrak{L}_F(\mathcal{C})\widetilde{\otimes}\mathfrak{L}_G(\mathcal{C}):=\mathfrak{L}_{F\circ G}(\mathcal{C})\\
\left(\mathfrak{L}_{\alpha},\mathfrak{L}_{\beta}\right) &\longmapsto & \mathfrak{L}_{\alpha}\widetilde{\otimes}\mathfrak{L}_{\beta}
\end{eqnarray*}
where the tensor product of the functors $\mathfrak{L}_{\alpha}:\mathfrak{L}_{G}(\mathcal{C})\longrightarrow \mathfrak{L}_F(\mathcal{C})$ and $\mathfrak{L}_{\beta}:\mathfrak{L}_{G'}(\mathcal{C})\longrightarrow \mathfrak{L}_{F'}(\mathcal{C})$ is the functor $\mathfrak{L}_{\alpha}\widetilde{\otimes}\mathfrak{L}_{\beta}:\mathfrak{L}_{G}(\mathcal{C})\widetilde{\otimes}\mathfrak{L}_{G'}(\mathcal{C})\longrightarrow \mathfrak{L}_F(\mathcal{C})\widetilde{\otimes}\mathfrak{L}_{F'}(\mathcal{C})$ which is defined by the composition of functors $\mathfrak{L}_{{}_{F}\beta}\circ\mathfrak{L}_{\alpha_{G'}}$ which come from of the composition of the natural transformations
$$
{}_{F}\beta: F\circ F'\longrightarrow F\circ G',\qquad \alpha_{G'}: F\circ G'\longrightarrow G\circ G'.
$$

In this case, the unity $\boldsymbol{1}_{\widetilde{\otimes}}\in\text{Gr}(\mathcal{C})$ is the category of cochain complexes on $\mathcal{C}$, that is, $\boldsymbol{1}_{\widetilde{\otimes}}=\mathfrak{L}_{Id_{\mathcal{C}}}(\mathcal{C})=\text{Ch}(\mathcal{C})$ and, clearly by the construction, $\left(\mathfrak{L}_F(\mathcal{C})\widetilde{\otimes} \mathfrak{L}_G(\mathcal{C})\right)\widetilde{\otimes} \mathfrak{L}_H(\mathcal{C})=\mathfrak{L}_F(\mathcal{C})\widetilde{\otimes}\left(\mathfrak{L}_{G}(\mathcal{C})\widetilde{\otimes}\mathfrak{L}_H(\mathcal{C})\right)$ and $1_{\widetilde{\otimes}}\widetilde{\otimes}\mathfrak{L}_F(\mathcal{C})=\mathfrak{L}_F(\mathcal{C})\widetilde{\otimes}1_{\widetilde{\otimes}}=\mathfrak{L}_F(\mathcal{C})$, for any $F,G, H\in\text{End}(\mathcal{C})$. Furthermore, the restriction functor
\begin{eqnarray*}
\widetilde{\mathfrak{L}}:\text{End}(\mathcal{C})&\longrightarrow & \text{Gr}(\mathcal{C})\\
F &\longmapsto & \mathfrak{L}_F(\mathcal{C})\\
\alpha: F\rightarrow G &\longmapsto & \mathfrak{L}_{\alpha}:\mathfrak{L}_G(\mathcal{C})\rightarrow \mathfrak{L}_F(\mathcal{C})
\end{eqnarray*}
satisfy: $\widetilde{\mathfrak{L}}(F\otimes G)=\widetilde{\mathfrak{L}}(F)\widetilde{\otimes}\widetilde{\mathfrak{L}}(G)$ and $\widetilde{\mathfrak{L}}(\alpha\otimes\beta)=\widetilde{\mathfrak{L}}(\alpha)\widetilde{\otimes}\widetilde{\mathfrak{L}}(\beta)$, that is, $\widetilde{\mathfrak{L}}$ is a strict monoidal functor. Indeed, clearly $\widetilde{\mathfrak{L}}(F\otimes G)=\mathfrak{L}_{F\otimes G}(\mathcal{C})=\mathfrak{L}_{F\circ G}(\mathcal{C})=\mathfrak{L}_F(\mathcal{C})\widetilde{\otimes}\mathfrak{L}_G(\mathcal{C})=\widetilde{\mathfrak{L}}(F)\widetilde{\otimes}\widetilde{\mathfrak{L}}(G)$. In addition, $\widetilde{\mathfrak{L}}(\alpha\otimes\beta)=\mathfrak{L}_{\alpha\otimes\beta}=\mathfrak{L}_{\alpha_{G'}\circ {}_{F}\beta}=\mathfrak{L}_{{}_{F}\beta}\circ\mathfrak{L}_{\alpha_{G'}}=\mathfrak{L}_{\alpha}\widetilde{\otimes}\mathfrak{L}_{\beta}=\widetilde{\mathfrak{L}}(\alpha)\widetilde{\otimes}\widetilde{\mathfrak{L}}(\beta)$.

In particular, restricting the functor $\mathfrak{L}$  to the full monoidal subcategory  $\text{Aut}(\mathcal{C})$, for a certain category $\mathcal{C}$, and denoting by $\mathfrak{L}_{\text{Aut}}(\Psi)$ the full subcategory of $\text{Gr}(\mathcal{C})$ with objects the categories $\mathfrak{L}_F(\mathcal{C})$ for $F\in\text{Aut}(\mathcal{C})$, we obtain the following important result.

\begin{proposition}
Suppose that $\mathcal{C}$ is an abelian $\Bbbk$-linear category. Then $\mathfrak{L}_{\text{Aut}}(\Psi)$ is a categorical group.
\end{proposition}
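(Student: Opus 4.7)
The plan is to transport the categorical group structure of $\text{Aut}(\mathcal{C})$ to $\mathfrak{L}_{\text{Aut}}(\mathcal{C})$ via the strict monoidal functor $\widetilde{\mathfrak{L}}$ restricted to $\text{Aut}(\mathcal{C})$. The argument splits into three verifications: $\mathfrak{L}_{\text{Aut}}(\mathcal{C})$ inherits a (strict) monoidal subcategory structure from $\text{Gr}(\mathcal{C})$, it is a groupoid, and every one of its objects admits a $\widetilde{\otimes}$-inverse.

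For the monoidal structure, I would first observe that if $F,G\in\text{Aut}(\mathcal{C})$ then $F\circ G\in\text{Aut}(\mathcal{C})$, so $\mathfrak{L}_F(\mathcal{C})\,\widetilde{\otimes}\,\mathfrak{L}_G(\mathcal{C})=\mathfrak{L}_{F\circ G}(\mathcal{C})$ again lies in $\mathfrak{L}_{\text{Aut}}(\mathcal{C})$. The unit $\boldsymbol{1}_{\widetilde{\otimes}}=\mathfrak{L}_{\text{Id}_{\mathcal{C}}}(\mathcal{C})$ belongs to $\mathfrak{L}_{\text{Aut}}(\mathcal{C})$ since $\text{Id}_{\mathcal{C}}\in\text{Aut}(\mathcal{C})$, so with the associator and unitors inherited from $\text{Gr}(\mathcal{C})$ the subcategory is strict monoidal and the restriction $\widetilde{\mathfrak{L}}\big|_{\text{Aut}(\mathcal{C})}$ is a (faithful, essentially surjective) strict monoidal functor onto it.

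To check the groupoid property, I would use that any morphism in $\mathfrak{L}_{\text{Aut}}(\mathcal{C})$ has the form $\mathfrak{L}_{\Psi}\colon \mathfrak{L}_G(\mathcal{C})\to\mathfrak{L}_F(\mathcal{C})$ for a natural isomorphism $\Psi\colon F\to G$ between auto-equivalences, since those are the only morphisms in $\text{Aut}(\mathcal{C})$. Writing $\Psi^{-1}\colon G\to F$ for the inverse, the contravariant functoriality of $\mathfrak{L}$ from Theorem~\ref{thm:LR} yields
\[
\mathfrak{L}_{\Psi^{-1}}\circ\mathfrak{L}_{\Psi}=\mathfrak{L}_{\Psi\circ\Psi^{-1}}=\mathfrak{L}_{1_G}=1_{\mathfrak{L}_G(\mathcal{C})},\qquad \mathfrak{L}_{\Psi}\circ\mathfrak{L}_{\Psi^{-1}}=\mathfrak{L}_{1_F}=1_{\mathfrak{L}_F(\mathcal{C})},
\]
so $\mathfrak{L}_{\Psi}$ is invertible with inverse $\mathfrak{L}_{\Psi^{-1}}$, whence every morphism of $\mathfrak{L}_{\text{Aut}}(\mathcal{C})$ is an isomorphism.

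Finally, for the existence of $\widetilde{\otimes}$-inverses, let $F\in\text{Aut}(\mathcal{C})$ with pseudo-inverse $H\in\text{Aut}(\mathcal{C})$ and natural isomorphisms $\alpha\colon F\circ H\to \text{Id}_{\mathcal{C}}$, $\beta\colon \text{Id}_{\mathcal{C}}\to H\circ F$. Strict monoidality of $\widetilde{\mathfrak{L}}$ gives
\[
\mathfrak{L}_F(\mathcal{C})\,\widetilde{\otimes}\,\mathfrak{L}_H(\mathcal{C})=\mathfrak{L}_{F\circ H}(\mathcal{C}),\qquad \mathfrak{L}_H(\mathcal{C})\,\widetilde{\otimes}\,\mathfrak{L}_F(\mathcal{C})=\mathfrak{L}_{H\circ F}(\mathcal{C}),
\]
and $\mathfrak{L}_{\alpha},\mathfrak{L}_{\beta}$, invertible by the previous step (or equivalently by Remark~\ref{rem:LR}), exhibit these products as isomorphic to $\boldsymbol{1}_{\widetilde{\otimes}}$. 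Thus $\mathfrak{L}_H(\mathcal{C})$ is a two-sided $\widetilde{\otimes}$-inverse of $\mathfrak{L}_F(\mathcal{C})$, completing the proof. The only real subtlety, and the step where care must be taken, is reading $\mathfrak{L}_{\text{Aut}}(\mathcal{C})$ so that its hom-sets are precisely the images $\widetilde{\mathfrak{L}}\big|_{\text{Aut}(\mathcal{C})}(\text{Hom})$; otherwise the category could inherit non-invertible $\mathfrak{L}_{\Psi}$ from $\text{End}(\mathcal{C})$ and fail to be a groupoid. Once this convention is fixed, the verification is purely formal.
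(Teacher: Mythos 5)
Your proof is correct and follows essentially the same route as the paper: the paper likewise verifies the monoidal structure inherited from $\text{Gr}(\mathcal{C})$, exhibits the $\widetilde{\otimes}$-inverse of $\mathfrak{L}_F(\mathcal{C})$ as $\mathfrak{L}_G(\mathcal{C})$ for $G$ the pseudo-inverse of $F$ (phrased there as rigidity via Corollary~\ref{cor:LRequiv}), and deduces the groupoid property from the functoriality of $\mathfrak{L}$ applied to isomorphisms of functors. Your closing caveat about reading the hom-sets as the image of $\text{Aut}(\mathcal{C})$ under $\widetilde{\mathfrak{L}}$ is exactly the convention the paper intends, since $\text{Aut}(\mathcal{C})$ is defined with only isomorphisms of functors as morphisms.
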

\begin{proof}
By Theorem \ref{thm:abelian} and the hypothesis, $\mathfrak{L}_{\text{Aut}}(\Psi)$ is an abelian $\Bbbk-$linear category. Likewise, by the reasoning above, we have that $\mathfrak{L}_{\text{Aut}}(\Psi)$ is a monoidal category, which is also rigid. Indeed, by Corollary \ref{cor:LRequiv}, for any $X:=\mathfrak{L}_F(\mathcal{C})\in\mathfrak{L}_{\text{Aut}}(\Psi)$, its left and right duals are given by $X^{\ast}={}^{\ast}X=\mathfrak{L}_G(\mathcal{C})$, where $G$ is the pseudo-inverse of $F\in \text{Aut}(\mathcal{C})$. Finally, the morphisms in $\mathfrak{L}_{\textup{Aut}}(\Psi)$ are isomorphisms, since the functors $\mathfrak{L}_{\alpha}:\mathfrak{L}_{F'}(\mathcal{C})\longrightarrow \mathfrak{L}_F(\mathcal{C})$ come from isomorphisms $\alpha:F\longrightarrow F'$, and by functoriality, we conclude $\mathfrak{L}_{\alpha}$ is an isomorphism as well. This concludes the proof.

\end{proof}


\begin{remark}\rm
\label{rem:falgebras}

\noindent

\begin{enumerate}[(i)]
\item Theorem \ref{thm:graduate} shows that the category (actually, the $2$-category!) $\text{Gr}(\mathcal{C})$ can be identified with the collection of \emph{$F$-graduate categories over $\mathcal{C}$}. The above results show that the ``natural tensor product'' over the (endo)functors  ``$F$'s'' can be naturally extended to the category of $F$-graduate categories over $\mathcal{C}$ endowing this latter category with a ``natural tensor product''.
\item If $E^F:L_F(\mathcal{C})\longrightarrow\mathcal{C}^{\mathbb{Z}}$ is the 
``forgetful projection functor'', defined by $(M_n,f_n)\longmapsto (M_n)$ and $(\varphi_n)\longmapsto(\varphi_n)$, then the following diagram
\begin{equation}\label{eq:grc}
\xymatrix{
& \mathfrak{L}_G(\mathcal{C}) \ar[r]^{\mathfrak{L}_{\Psi}}\ar[d]_{E^G}& \mathfrak{L}_F(\mathcal{C})\ar[d]^{E^F} & \\
&\mathcal{C}^{\mathbb{Z}}\ar[r]_{\textup{Id}}&\mathcal{C}^{\mathbb{Z}}& 
}
\end{equation} 
is commutative for any natural transformation $\Psi: F\longrightarrow G$, we can say that $\text{Gr}(\mathcal{C})$ is a $2$-subcategory of $\mathfrak{L}(\mathcal{C})$. However, as the property in (\ref{eq:grc}) shows, $\text{Gr}(\mathcal{C})$ is not, in general, a full $2$-subcategory of $\mathfrak{L}(\mathcal{C})$.
\end{enumerate}
\end{remark}

\section{Some applications}\label{sec:apply}

Along this section we will fix a field $\Bbbk$ and all the endofunctors are $\Bbbk$-linear.

In this section we explore significant consequences of our constructions, focusing on two key settings as are representation theory of algebras and algebraic geometry with meaningful implications in both contexts.

Firstly, we delve into the study of $\otimes$-representable and representable functors over the category of finitely generated left modules over a finite-dimensional $\Bbbk$-algebra $A$ (denoted $A$\textup{-mod}). This characterization yields significant results and generalizations for representation theory of algebras, especially in the context of ``generalized repetitive algebras''.

Secondly, we translate these above characterizations and consequences to the study of endofunctors on the category of quasi-coherent sheaves over a specific class of schemes. This translation leads to novel insights and advances in this area.

\subsection{(Locally) Finite categories}

In this subsection we will study ($\Bbbk$-linear) endofunctors representables and $\otimes$-representables in the monoidal category $\textup{End}(A\text{-}\textup{mod})$ over a $\Bbbk$-algebra $A$ (with unity) of finite dimension on $\Bbbk$, where $A\text{-}\textup{mod}$ corresponds to the category of left modules finitely generated over the $\Bbbk$-algebra $A$.


\begin{definition}[{\cite[Definition 1.8.8, p. 10]{EGNO15}}]
Let $A$ be a $\Bbbk$-algebra of finite dimension over $\Bbbk$. An endofunctor $F\in\textup{End}(A\textup{-mod})$ is called $\otimes$-representable if there exists an $A$-$A$-bimodule $D$ such that $F$ is naturally isomorphic to $D\otimes_A -$
    \end{definition}

By \cite[Prop. 2.5.4, p.32]{EGNO15} we have a natural monoidal functor:
\begin{equation}\label{eq:bi-end}
\mathcal{F}:\,D\longmapsto (D\otimes_A -):\,A\textup{-bimod}\longrightarrow \text{End}(A\textup{-mod}),
\end{equation}
where $A$\text{-bimod} is the strict monoidal category of finitely generated $A$-bimodules with monoidal structure given by the usual tensor product $\otimes_A$ whose unity is $A$.

As guarantees the following proposition (see \cite[Prop.1.8.10, p.10]{EGNO15}) the functor $\mathcal{F}$ induce a monoidal equivalence between certain monoidal categories in (\ref{eq:bi-end}). 
\begin{proposition}
\label{prop:rightrep}
Let $A$ be a $\Bbbk$-algebra with unity and finite dimension over the field $\Bbbk$. An endofunctor $F:A\textup{-mod}\longrightarrow A\textup{-mod}$ is $\otimes$-representable if and only if $F\in \text{End}_{rexact}(A\textup{-mod})$. In particular, the functor $\mathcal{F}$ in (\ref{eq:bi-end}) establish an equivalence between the monoidal categories below:
\begin{equation*}
\mathcal{F}:\,D\longmapsto (D\otimes_A -):\,A\textup{-bimod}\longrightarrow \text{End}_{rexact}(A\textup{-mod}),
\end{equation*}
\end{proposition}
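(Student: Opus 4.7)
The plan is to prove the proposition in three stages: the easy implication, the construction of the quasi-inverse, and verification of the monoidal structure.

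First, the ``if'' direction: whenever $F \cong D\otimes_A -$ for an $A$-bimodule $D$, the functor $F$ is $\Bbbk$-linear and right exact because $D\otimes_A -$ is right exact on left $A$-modules over any ring. Thus $\mathcal{F}$ lands in $\textup{End}_{rexact}(A\textup{-mod})$.

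For the ``only if'' direction, given $F\in\textup{End}_{rexact}(A\textup{-mod})$ I would define a candidate inverse $\mathcal{G}(F):=F(A)$, equipped with the following bimodule structure. The left $A$-action is the one inherited from $F$ applied to the left $A$-module $A$. For the right $A$-action, note that for each $a\in A$ the right multiplication $r_a\colon A\longrightarrow A$, $x\longmapsto xa$, is a morphism of left $A$-modules; hence $F(r_a)\colon F(A)\longrightarrow F(A)$ provides a right $A$-action commuting with the left one, endowing $F(A)$ with an $A$-bimodule structure. Next I would construct a natural isomorphism $\eta_F\colon F(A)\otimes_A - \longrightarrow F$ as follows: for $M=A$ both sides are canonically $F(A)$, and by $\Bbbk$-linearity (hence additivity) of $F$ this extends to a natural isomorphism on every finitely generated free module $A^n$. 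For an arbitrary $M\in A\textup{-mod}$, since $A$ is a finite-dimensional (hence left Noetherian) $\Bbbk$-algebra, $M$ admits a finite free presentation $A^m \stackrel{\phi}{\longrightarrow} A^n \longrightarrow M \longrightarrow 0$; applying the right exact functors $F(A)\otimes_A -$ and $F$ yields a commutative ladder with exact rows, where the vertical arrows in the two leftmost positions are the isomorphisms constructed above, so the universal property of cokernels produces the desired isomorphism $\eta_F(M)\colon F(A)\otimes_A M\longrightarrow F(M)$.

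To see that this gives a monoidal equivalence, observe that $\mathcal{F}(A)=A\otimes_A -$ is naturally isomorphic to the unit $\textup{Id}_{A\textup{-mod}}$ of $\textup{End}(A\textup{-mod})$, and that by associativity of the tensor product,
\[
\mathcal{F}(D\otimes_A D')=(D\otimes_A D')\otimes_A - \;\cong\; D\otimes_A(D'\otimes_A -)=\mathcal{F}(D)\circ\mathcal{F}(D'),
\]
so $\mathcal{F}$ respects the monoidal structures; the coherence constraints reduce to the standard associators and unitors for the tensor product of bimodules.

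The main obstacle is verifying that $\eta_F$ is well-defined (independent of the free presentation), natural in $M$, and natural in $F$. Independence of the presentation and naturality in $M$ follow from chasing the diagram that compares two presentations via a lifted chain morphism and using the right exactness of both $F$ and $F(A)\otimes_A -$; naturality in $F$ amounts to checking that any $\Bbbk$-linear natural transformation $\alpha\colon F\longrightarrow F'$ between right exact endofunctors gives a morphism of bimodules $\alpha_A\colon F(A)\longrightarrow F'(A)$ whose tensorization recovers $\alpha$ through $\eta_F$ and $\eta_{F'}$, which is again a diagram chase on free presentations. These verifications, while routine, are the delicate technical core of the argument.
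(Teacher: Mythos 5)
Your proposal is correct and follows essentially the same route as the paper's proof: the Eilenberg--Watts-style argument that sets $D=F(A)$ with the right $A$-action induced by $F$ applied to right multiplications (equivalently, via $A\cong\Hom_A(A,A)$), establishes the isomorphism on free modules by additivity, and extends to all of $A\textup{-mod}$ using a finite free presentation and right exactness. You are somewhat more explicit than the paper about the naturality checks and the monoidal compatibility (which the paper leaves to the cited references), but the underlying argument is the same.
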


\begin{proof}
Clearly, if $F$ is isomorphic to $D\otimes_A -$, then $F$ is a right exact functor. Conversely, suppose that $F$ is a right exact functor and consider the $A$-module $D=F(A)$. It easy to check that $D$ admits a $(A,A)$-bimodule structure from the $\Bbbk$-linear homomorphism $\text{Hom}(A,A)\longrightarrow\text{Hom}(F(A),F(A))$ and the canonical identification of $A\cong\text{Hom}(A,A)$. In consequence, from the canonical isomorphism $F(A)\cong A\otimes_A D$, we obtain for any free $M\in \textup{mod}A$, $F(M)\cong D\otimes_A M$, since $F$ is an additive endofunctor. We next claim is $F(M)\cong D\otimes_A M$, for all $M\in A\textup{-mod}$. Indeed, from the free presentation of $M$, i.e., the exact sequence $L_1\stackrel{g}{\longrightarrow} L_0\longrightarrow M\longrightarrow 0$, where $L_0$ and $L_1$ are free (finitely generated) $A$-modules, we obtain the exact sequence $F(L_1)\stackrel{F(g)}{\longrightarrow} F(L_0)\longrightarrow F(M)\longrightarrow 0$, due to $F$ is right exact functor. Now, from the following isomorphisms $F(M)\cong\text{coker}(F(g))$, $F(L_1)\cong D\otimes_A L_1$, $F(L_0)\cong D\otimes_A L_0$, $\textup{coker}(g\otimes 1_D)\cong \textup{coker}(g)\otimes 1_D$ and identifying $\textup{coker}(F(g))$ with $\textup{coker}(g\otimes 1_D)$, it follows that $F(M)\cong D\otimes_A M$. Likewise, as consequence of these isomorphisms, its easy to check that the isomorphism $F(M)\cong D\otimes_A M$ no depends of the free presentation of $M$ and that satisfies the functoriality in $M$, which completes the proof.

\end{proof}

\begin{remark}\rm
As a consequence of \cite[Theorem. 5.51]{R09}, we have that every $F\in \text{End}_{rexact}(A\text{-\textup{mod}})$ has a right adjoint given by the (covariant representable left-exact) endofunctor $G=\text{Hom}_A(D,-)$, for $D$ an $A$-bimodule. Thus, $(F,G)$ is an adjoint pair on $A$-mod. In particular, the category $\mathfrak{L}_F(A\textup{-mod})$ is abelian, complete and co-complete, by Corollary \ref{cor:adjconsq}. 
\end{remark}

For the special case that $F\in\text{End}_{exact}(A\textup{-\textup{mod}})\hookrightarrow \text{End}_{rexact}(A\text{-\textup{mod}})$ belongs to the rigid full monoidal subcategory of exact functors (see introduction to Section \ref{subsec:5}) we have a new characterization for $D$, which $\otimes$-represents to $F$ as guarantees the proposition below (see \cite[Exercise 2.10.16, p.43]{EGNO15}).


\begin{proposition}\label{prop:exactcase}
Let $A$ be a finite-dimensional $\Bbbk$-algebra with unity over the field $\Bbbk$ and let $F\in \textup{End}_{rexact}(A\text{-\textup{mod}})$ be an endofunctor $\otimes$-representable by $D\in A$\textup{-bimod}. Then, $F\in \textup{End}_{exact}(A\textup{-mod})$ if and only if $D\in A$\textup{-bimod} is projective.
\end{proposition}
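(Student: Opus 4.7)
My plan is to reduce everything to the explicit description $F \cong D\otimes_A-$ coming from the $\otimes$-representability hypothesis, and then translate ``exactness of $F$'' into a flatness/projectivity condition on $D$, exploiting that $A$ is finite-dimensional (hence Artinian and Noetherian) over the field $\Bbbk$.

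For the forward implication (projective $\Rightarrow$ exact), I would start by writing $D$ as a direct summand in $A$-\textup{bimod} of a finite free $A$-bimodule $(A\otimes_\Bbbk A)^n$. The key computation is the natural isomorphism
\[
(A\otimes_\Bbbk A)\otimes_A M\;\cong\;A\otimes_\Bbbk M,
\]
for any $M\in A$-\textup{mod}, which identifies the endofunctor represented by the free bimodule $A\otimes_\Bbbk A$ with $A\otimes_\Bbbk -$. Since $\Bbbk$ is a field, tensor product over $\Bbbk$ is exact, so $A\otimes_\Bbbk -$ is exact; its finite direct sum is exact, and exactness of additive functors is inherited by direct summands. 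Hence $F\cong D\otimes_A-$ is exact.

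For the backward implication (exact $\Rightarrow$ projective), I would apply $F$ to short exact sequences of $A$-modules to conclude that $D$ is flat as a right $A$-module. Since $A$ is Artinian and $D=F(A)$ is finitely generated (because $F:A\text{-\textup{mod}}\to A\text{-\textup{mod}}$), flatness upgrades to projectivity as a right $A$-module, say $D\oplus D'\cong A^n$ in \textup{mod-}$A$. The remaining task is to promote this one-sided splitting to a splitting in $A$-\textup{bimod}, so that $D$ becomes a direct summand of a free bimodule $(A\otimes_\Bbbk A)^m$. The natural candidate for the cover is $A\otimes_\Bbbk D\twoheadrightarrow D$ induced by the left action, together with the right-projective section of $D$ tensored on the left with $A$; compatibility of these two structures, combined with the unit isomorphism $A\otimes_A D\cong D$, should yield the desired bimodule section.

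The main obstacle, I expect, is precisely this last step: transferring a right-$A$-linear splitting to an honest $A$-bimodule splitting is not automatic, and requires careful checking that the constructed section is left $A$-linear. This is the only place where the bimodule structure of $D$ is used in a nontrivial way beyond its right-module structure, and it is what genuinely distinguishes the statement from the more elementary ``$D\otimes_A-$ exact iff $D$ is right-flat''.
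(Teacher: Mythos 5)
Your backward implication cannot be completed along the route you describe, because you have read ``$D$ is projective'' as ``projective as an object of the category of $A$-bimodules'', i.e.\ as a direct summand of a free bimodule $(A\otimes_\Bbbk A)^m$. Under that reading the implication ``$F$ exact $\Rightarrow D$ projective'' is false: take $D=A$ itself, so that $F=A\otimes_A-\cong \mathrm{Id}$ is exact, while $A$ is projective as an $A$-bimodule only when $A$ is separable over $\Bbbk$ (it fails already for $A=\Bbbk[x]/(x^2)$). So the ``main obstacle'' you flag at the end --- upgrading the right-$A$-linear splitting $D\oplus D'\cong A^n$ to a bimodule splitting --- is not a technical difficulty to be checked carefully but a genuine obstruction: no section can be made left $A$-linear in general. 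The proposition, as the paper proves it, concerns one-sided projectivity: $F\cong D\otimes_A-$ is exact iff $D$ is flat as a (right) $A$-module, and since $A$ is finite-dimensional over a field (hence Artinian, hence perfect) and $D$ is finitely generated, flat is equivalent to projective as a one-sided module. With that reading your backward direction is already complete at the point where you obtain $D\oplus D'\cong A^n$ in $\textup{mod-}A$; the final paragraph of your argument should be deleted rather than repaired.

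Your forward direction is correct as written, but it proves the implication from the stronger hypothesis of bimodule projectivity (via the pleasant identification $(A\otimes_\Bbbk A)\otimes_A M\cong A\otimes_\Bbbk M$). Under the one-sided reading the forward direction is even shorter: a projective right $A$-module is flat, so $D\otimes_A-$ is exact. This matches the paper's proof, which runs the single chain ``exact $\Leftrightarrow$ flat $\Leftrightarrow$ projective over a perfect ring'' in both directions at once.
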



\begin{proof}
In this case, we have that $F$ is exact if and only if $F$ is left and right exact. Due to the fact that $F$ is characterizes by $D\otimes_A -$ we obtain that: $F$ is left exact (resp. right exact) if and only if $D$ is a left flat (resp. right flat) $A$-module. Since $A$ is a perfect ring it follows that $D$ is a left flat (resp. right flat) $A$-module if and only if $D$ is a left projective (resp. right projective) $A$-module (see e.g \cite[p. 315]{Arings}), which completes the proof.

\end{proof}

\begin{remark}\rm
\noindent

\begin{enumerate}[(i)]
\item From Proposition \ref{prop:exactcase}, we obtain a new equivalence of strict monoidal categories induced by the functor $\mathcal{F}$ in (\ref{eq:bi-end}):
\begin{equation*}
\mathcal{F}:\,D\longmapsto (D\otimes_A -):\,A\textup{-projbimod}\longrightarrow \text{End}_{exact}(A\textup{-mod}).
\end{equation*}
where $A\textup{-projbimod}$ corresponds to the full subcategory of projective finitely generated $A$-bi\-modules.
\item Let $A$ be a finite dimensional $\Bbbk$-algebra over a field. Recall that the Nakayama functors $\mathcal{N}_A$ and $\mathcal{N}_A^{-1}$ are naturally isomorphic to the endofunctors $D(A)\otimes_A -$ and $\text{Hom}_A(D(A),-)$, respectively (see \cite[Prop. 5.2, Ch. III]{SY}). In this case also we obtain a mutually inverse equivalences of the categories:
$$
\text{proj}A\begin{array}{c}\stackrel{\mathcal{N}_A}{\longrightarrow}\\ \stackrel{\longleftarrow}{\scriptstyle{\mathcal{N}_A^{-1}}}\end{array}\text{inj} A
$$
This equivalence is used by Happel to show that $\widehat{A}$-mod, the category of finitely generated modules over the repetitive algebra, is a Frobenius category. More details in \cite{H88}. This result is the motivation for the proposition below.
\end{enumerate}
\end{remark}

\begin{proposition}\label{prop:proj-inj}
Let $I$ be an object of $A$-injbimod, the category of finitely generated injective $A$-bi\-modules. Then, the endofunctors $F=I\otimes_A -$ and $G=\textup{Hom}_A(I,-)$ are mutually inverse equivalences when restricted to the categories $\textup{proj}A$ and $\textup{inj} A$, respectively. 
\end{proposition}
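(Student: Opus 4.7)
The plan is to mimic the Nakayama-functor argument from the remark preceding the proposition, now with $D(A)$ replaced by an arbitrary injective $A$-bimodule $I$: establish adjointness, check that the restrictions land in the correct subcategories, and reduce ``unit and counit are isomorphisms'' to a single generating calculation.

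First, I would note that $(F,G)$ is an adjoint pair of $\Bbbk$-linear endofunctors on $A$-\textup{mod} by the standard tensor-hom adjunction, with unit $\eta_M(m)(x)=x\otimes m$ and counit $\varepsilon_N(x\otimes f)=f(x)$. Next I would verify the restrictions. Since $A$ is finite-dimensional and $I$ is injective as an $A$-bimodule, $I$ is a direct summand of $D(A\otimes_\Bbbk A^{\textup{op}})$, which as a left $A$-module is a direct sum of copies of $D(A)$; hence $I$ is itself injective as a left $A$-module, so $F(A^n)=I^n\in\textup{inj}A$, and by additivity of $F$ one gets $F(\textup{proj}A)\subseteq\textup{inj}A$. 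Symmetrically $I$ is injective as a right $A$-module, and the natural isomorphism $\textup{Hom}_A(I,D(A))\cong D(I)$ shows that $G(D(A))$ is projective as a left $A$-module; since every object of $\textup{inj}A$ is a summand of $D(A)^n$, additivity of $G$ yields $G(\textup{inj}A)\subseteq\textup{proj}A$.

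The main obstacle is verifying that the unit $\eta_P$ and counit $\varepsilon_E$ are isomorphisms for $P\in\textup{proj}A$ and $E\in\textup{inj}A$. By naturality and additivity of $F$ and $G$, together with the fact that every projective (resp.\ injective) is a direct summand of some $A^n$ (resp.\ $D(A)^n$), it is enough to check that the two canonical maps
$$A\longrightarrow\textup{Hom}_A(I,I)\qquad\text{and}\qquad I\otimes_A D(I)\longrightarrow D(A)$$
are isomorphisms. This amounts to a bimodule version of Morita duality, and it is precisely the bimodule injectivity of $I$ (which forces the two $A$-actions on $I$ to mutually centralize one another, as in the prototype case $I=D(A)$) that makes the verification possible. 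Once these two identifications are in hand, closure of $\textup{proj}A$ and $\textup{inj}A$ under direct summands propagates the conclusion, yielding that $F$ and $G$ restrict to quasi-inverse equivalences between $\textup{proj}A$ and $\textup{inj}A$.
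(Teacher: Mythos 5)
Your first two paragraphs are correct and essentially coincide with the paper's argument: the paper's proof consists precisely of the observations that $F(A)=I$ is injective as a left $A$-module and that $G(D(A))\cong D(I)$ is projective as a left $A$-module, followed by passage to direct summands. Your justification that bimodule injectivity implies one-sided injectivity (via $I$ being a summand of $D(A\otimes_\Bbbk A^{\textup{op}})$) fills in a step the paper leaves implicit, and your identification $\textup{Hom}_A(I,D(A))\cong D(I)$ is exactly the paper's.

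The genuine gap is in your third paragraph. Reducing ``mutually inverse'' to the two canonical maps $A\longrightarrow\textup{Hom}_A(I,I)$ and $I\otimes_A D(I)\longrightarrow D(A)$ being isomorphisms is the right move, but the assertion that bimodule injectivity of $I$ makes them isomorphisms is not proved and is in fact false. If $I$ is an injective bimodule then so is $J=I\oplus I$, and the canonical map $A\longrightarrow\textup{Hom}_A(J,J)\cong M_2\bigl(\textup{Hom}_A(I,I)\bigr)$ cannot be bijective for dimension reasons; concretely $J=D(A)\oplus D(A)$ (or $J=D(A\otimes_\Bbbk A^{\textup{op}})$ under the stricter reading of bimodule injectivity) is a counterexample whenever $\dim_\Bbbk A\geq 2$. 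So no argument from injectivity alone can close this step: one needs $I$ to be, for instance, isomorphic to $D(A)$ as a left and as a right module, or an invertible bimodule. Be aware that the paper's own proof does not address this point either --- it only verifies $F(\textup{proj}A)\subseteq\textup{inj}A$ and $G(\textup{inj}A)\subseteq\textup{proj}A$ and never checks that the unit and counit restrict to isomorphisms --- so your write-up has the virtue of making the missing step explicit, but it does not (and, as stated, cannot) supply it.
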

\begin{proof}
Clearly, putting $A$ the minimal projective generator $A$-module, we have that $F(A)=I$ is an injective $A$-module. In consequence, if $L=P\oplus M$, where $L$ is a free $A$-module and $P$ is a projective $A$-module, it follows that $F(L)=F(P)\oplus F(M)$, where $F(L)$ is injective. Hence, $F(P)$ is an injective $A$-module. Conversely, the argument is similar as above putting $D(A)$ the minimal injective cogenerator $A$-bimodule to obtain that $G(D(A))=D(I)$, which is a projective $A$-module.

\end{proof}

\subsection{Generalized Repetitive algebras or Cochain complexes over representable functors}\label{subsec:GRA}

We fix an algebraically closed field $\Bbbk$ of characteristic zero throughout this subsection. Inspired by the previous subsection, we delve into the special case of derived categories $\mathfrak{L}_F(A\textup{-mod})$ and $\mathfrak{R}_F(A\textup{-mod})$. Here, $F\in\textup{End}(A\textup{-mod})$ denotes a $\otimes$-representable endofunctor on the category of finitely-generated $A-$modules. Notably, these derived categories have garnered significant attention across several mathematical domains due to their diverse applications. Some key examples of these applications are:
\begin{itemize}
    \item 
    $\textup{Ch}(A\textup{-Mod})=\mathfrak{L}_{1_{A\textup{-Mod}}}(A\textup{-Mod})$.
    
    \item 
    $\mathfrak{L}_F(A\textup{-mod})\cong\widehat{A}\textup{-mod}$, when $F\cong DA\otimes_A-$ and $\widehat{A}$ the repetitive algebra of $A$ (cf. \cite{HW83}).
    
    \item 
    $\mathfrak{L}_F(A\textup{-mod})$, when $F\cong E\otimes_A-$ and $E=\textup{Ext}^2_A(DA,A)$, is isomorphic to the category of modules of the \emph{Cluster repetitive algebra} of $A$, cf. \cite[Section 3.3, p. 153]{Ass18}  and \cite[Section 1.3]{ABS09} when $A$ is tilted algebra.
    
    \item The study of equivalences between stable module categories of repetitive algebras via Wakamatsu-Tilting modules, see \cite{w21}.
 
\end{itemize}

\vspace{0,5cm}
Throughout this subsection, we focus on left finitely generated modules. While some results extend to non-finitely generated modules, for clarity and concreteness, we restrict our attention to finitely generated modules in this subsection and consider only algebras over a field $\Bbbk$.

From the work of D. Hughes and J. Waschb\"usch (cf. \cite{HW83}), this subsection extends the construction of the repetitive algebra of a finite-dimensional $\Bbbk$-algebra $A$. Namely, we focus on a finitely-generated $A$-$A$-bimodule $B$ and we define the algebra $\widehat{B}$ as an infinite-dimensional $\Bbbk$-algebra (without unit). More precisely, it is the vector space
	$$
	\widehat{B}:= \left(\bigoplus_{n\in\mathbb{Z}}A\right)\oplus\left(\bigoplus_{n\in\mathbb{Z}}B\right).
	$$
whose multiplication rule, for $(a_n,b_n)_{n\in\mathbb{Z}}, (a'_n,b'_n)_{n\in\mathbb{Z}}\in \widehat{B}$, is given by:
	$$
	(a_n,b_n)_{n\in\mathbb{Z}} \cdot(a'_n,b'_n)_{n\in\mathbb{Z}} = (a_na'_n, a_{n+1}b'_n +b_na'_n)_{n\in\mathbb{Z}}.
	$$
Here, an element $(a_n,b_n)_{n\in\mathbb{Z}}$ of $\widehat{B}$ has only finitely many of the $a_n$'s and $b_n$'s being non-zero. Now, we write $\widehat{e}_n:=\left(\delta_{nm},0\right)_{m\in\mathbb{Z}}\in\widehat{B}$ for each $n\in\mathbb{Z}$, where $\delta_{nm}$ is the Kronecker delta. The set $\left\{\widehat{e}_n \mid n\in\mathbb{Z}\right\}$ is a complete set of pairwise orthogonal idempotents in $\widehat{B}$, in the sense that, the following properties hold:
\begin{itemize}
	\item $\widehat{e}_n\widehat{e}_m=\delta_{nm}\widehat{e}_n$ for all $n,m\in\mathbb{Z}$.
	\item For all $\widehat{x}=\left(a_n,b_n\right)_{n\in\mathbb{Z}}\in\widehat{B}$ and $s,m\in\mathbb{Z}$ 
	\begin{displaymath}
	\widehat{e}_{s}\widehat{x}\widehat{e}_m = 
	\begin{cases}
	\left(\delta_{nm}a_n,0\right)_{n\in\mathbb{Z}}, & \text{if $s=m$;}\\
	\left(0,\delta_{nm}b_n\right)_{n\in\mathbb{Z}}, & \text{if $s=m+1$;}\\
	\left(0,0\right)_{n\in\mathbb{Z}}, & \text{otherwise.}
	\end{cases}
	\end{displaymath}
Moreover, for each $m\in\mathbb{Z}$, we obtain the isomorphisms of $\Bbbk$-algebras $\theta_m$ and, likewise, the collection of isomorphisms of $A$-$A$-bimodules $\phi_m$, both determined as follows:
	$$
	\ \ \ \ \ \ \ \ 
    \begin{array}{rcl}
	\theta_m: A & \longrightarrow & \widehat{e}_{m}\widehat{B}\widehat{e}_m\\
    		  a & \longmapsto     & \theta_m(a):=(a\delta_{nm},0)_{n\in\mathbb{Z}}
    \end{array}
    \ \ \ \ \ \ \ \ \ \text{and}
    \ \ \ \ \ \ \ \ \ 
    \begin{array}{rcl}
	\phi_m: B & \longrightarrow & \widehat{e}_{m+1}\widehat{B}\widehat{e}_m\\
    		b & \longmapsto     & \phi_m(a):=(0,b\delta_{nm})_{n\in\mathbb{Z}}
    \end{array}
    $$

\item Finally,
    $$
    \widehat{B}=\left(\bigoplus\limits_{n\in\mathbb{Z}}\widehat{e}_{n}\widehat{B}\widehat{e}_n\right)\oplus\left(\bigoplus\limits_{n\in\mathbb{Z}}\widehat{e}_{n+1}\widehat{B}\widehat{e}_n\right).
	$$		
\end{itemize}
Therefore $\widehat{B}$ can be interpreted as an ``infinite matrix algebra" as follows:
	$$
	\widehat{B}=\begin{bmatrix}
    \ddots &  	&   &   	 & 		  \\
    \ddots & A  &   &   	 & 		  \\
           & B  & A &   	  &        \\
           &    & B &   A 	  & 	   \\
           &    &   & \ddots & \ddots \\
	\end{bmatrix}
	$$
when the product in $\widehat{B}$ of two infinite matrices of this form is the usual product of matrices, except that the entries of the second lower diagonal in the product are ignored and set to be zero entries.


\begin{remark}\rm
$\widehat{DA}$ is the repetitive of $A$, defined by D. Hughes and J. Waschb\"usch in \cite{HW83}.
\end{remark}


\begin{lemma}
\label{lem:PedroGerman}
The following conditions are equivalent:
\begin{enumerate}[(i)]
\item $M\in\widehat{B}$\textup{-mod}.
\item As vector space, $M$ decomposes in the form $M=\bigoplus\limits_{n\in\mathbb{Z}}\widehat{e}_nM$.
\end{enumerate}
\end{lemma}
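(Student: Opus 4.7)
The proof will rest almost entirely on two structural facts already extracted from the setup: the orthogonality $\widehat{e}_n\widehat{e}_m=\delta_{nm}\widehat{e}_n$ of the distinguished idempotents, and the block decomposition $\widehat{B}=\bigl(\bigoplus_{n}\widehat{e}_n\widehat{B}\widehat{e}_n\bigr)\oplus\bigl(\bigoplus_{n}\widehat{e}_{n+1}\widehat{B}\widehat{e}_n\bigr)$. In particular, for every $\widehat{x}\in\widehat{B}$ there are only finitely many indices $n$ with $\widehat{e}_n\widehat{x}\neq 0$, and one has $\widehat{x}=\sum_{n\in\mathbb{Z}}\widehat{e}_n\widehat{x}$ as a finite sum.

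For the direction (i) $\Rightarrow$ (ii) I would first show that the sum $\sum_{n}\widehat{e}_nM$ is direct: if $x\in\widehat{e}_nM\cap\widehat{e}_mM$ with $n\neq m$, write $x=\widehat{e}_ny=\widehat{e}_mz$; applying $\widehat{e}_n$ on the left and using orthogonality gives $x=\widehat{e}_nx=\widehat{e}_n\widehat{e}_mz=0$. Next, to obtain equality $M=\bigoplus_{n}\widehat{e}_nM$, I would use the fact that $M\in\widehat{B}\text{-mod}$ is finitely generated: fix generators $m_1,\dots,m_k$. Any $m\in M$ is a $\Bbbk$-linear combination of elements of the form $\widehat{x}m_i$, and since $\widehat{x}=\sum_{n}\widehat{e}_n\widehat{x}$ is a \emph{finite} sum, we get $\widehat{x}m_i=\sum_{n}\widehat{e}_n(\widehat{x}m_i)\in\sum_{n}\widehat{e}_nM$. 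This yields $M\subseteq\sum_{n}\widehat{e}_nM$, and the reverse inclusion is trivial.

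For the direction (ii) $\Rightarrow$ (i) I observe that the very formulation of the decomposition $M=\bigoplus_{n}\widehat{e}_nM$ presupposes a left $\widehat{B}$-action on $M$, so what needs to be checked is only that this module is finitely generated, i.e., lies in $\widehat{B}\text{-mod}$ in the paper's sense. The decomposition guarantees the ``unital'' behaviour adapted to the non-unital algebra $\widehat{B}$: every $m\in M$ lies in finitely many summands $\widehat{e}_nM$, and this is exactly the local finiteness condition that singles out the category $\widehat{B}\text{-mod}$ when $\widehat{B}$ lacks a global unit. The compatibility of scalar multiplication with the decomposition follows again from orthogonality, since $\widehat{e}_k(\widehat{e}_nM)=\delta_{kn}\widehat{e}_nM$.

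The main subtlety, and the only point where care is required, is the conventional meaning of ``$M\in\widehat{B}\text{-mod}$'' when $\widehat{B}$ has no unit. I expect the crux of the argument to be the reconciliation of the usual ``$1\cdot m=m$'' axiom with the replacement provided by the complete family $\{\widehat{e}_n\}_{n\in\mathbb{Z}}$; once this is clear, the proof itself reduces to repeated use of the identity $\widehat{e}_n\widehat{e}_m=\delta_{nm}\widehat{e}_n$ and to the finiteness of the sums $\widehat{x}=\sum_{n}\widehat{e}_n\widehat{x}$, which together give directness, completeness of the decomposition, and the module compatibility in one uniform stroke.
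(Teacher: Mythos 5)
Your proof of the direction (i) $\Rightarrow$ (ii) is essentially the paper's own argument: take a finite generating set $m_1,\dots,m_t$, write each coefficient $\widehat{b}_i$ as the finite sum $\sum_n\widehat{e}_n\widehat{b}_{in}$ coming from $\widehat{B}=\bigoplus_n\widehat{e}_n\widehat{B}$, and regroup to land in $\sum_n\widehat{e}_nM$. You go slightly further than the paper by also verifying directness of the sum via orthogonality of the $\widehat{e}_n$ (the paper asserts membership in $\bigoplus_n\widehat{e}_nM$ without checking this), which is a worthwhile addition even if the argument should really be phrased as $\widehat{e}_nM\cap\sum_{m\neq n}\widehat{e}_mM=0$ rather than as a pairwise intersection.

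Where you and the paper part ways is the converse (ii) $\Rightarrow$ (i): the paper simply does not prove it, and your treatment is a discussion of conventions rather than a proof. You are right to flag the crux — for the non-unital algebra $\widehat{B}$, membership in $\widehat{B}\textup{-mod}$ has to be interpreted via the complete orthogonal family $\{\widehat{e}_n\}$, and the decomposition $M=\bigoplus_n\widehat{e}_nM$ is exactly the ``locally unital'' condition replacing $1\cdot m=m$. But note that as literally stated, (ii) cannot imply finite generation (e.g.\ $\widehat{B}$ itself decomposes as $\bigoplus_n\widehat{e}_n\widehat{B}$ yet is not finitely generated over itself), so either the lemma is implicitly restricted to finitely generated $M$ throughout or ``mod'' must be read without the finiteness condition here. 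You identified this gap correctly; neither you nor the paper closes it, so on the direction the paper actually proves your argument matches, and on the other direction you are at least more honest about what remains to be pinned down.
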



\begin{proof}
Suppose that $\left\{m_1,m_2,\dots,m_t\right\}$ is the set of generates of $M$ as $\widehat{B}$-module. Clearly, each $m\in M$ has the form $m=\sum\limits_{i=1}^t\widehat{b}_im_i$ with $\widehat{b}_1,\widehat{b}_2,\dots,\widehat{b}_t\in \widehat{B}$. Since $\widehat{B}=\bigoplus\limits_{n\in\mathbb{Z}}\widehat{e}_n\widehat{B}$ we have $\widehat{b}_i=\sum\limits_{n\in\mathbb{Z}}\widehat{e}_n\widehat{b}_{in}$, where $\widehat{b}_{in}\in \widehat{B}$. This implies that
	$$
	m=\sum_{i=1}^t\left(\sum_{n\in\mathbb{Z}}\widehat{e}_n\widehat{b}_{in}\right)m_i=\sum_{n\in\mathbb{Z}}\widehat{e}_n
\left(\sum_{i=1}^t\widehat{b}_{in}m_i\right)\in\bigoplus_{n\in\mathbb{Z}}\widehat{e}_n M.
	$$

\end{proof}

A key consequence of Lemma \ref{lem:PedroGerman} is that it assures the existence of the following covariant functor
$\mathcal{F}:\widehat{B}\textup{-mod}
\longrightarrow\mathfrak{L}^b_{B\otimes_A-}(A\textup{-mod})$
defined as follows: For any $\widehat{B}$-module $M=\bigoplus\limits_{n\in\mathbb{Z}}\widehat{e}_nM$, denote by $\mathcal{F}(M)$ the sequence $\left(M^n,d^n_M\right)_{n\in\mathbb{Z}}$ determined by:
\begin{itemize}
\item $A$-modules $M^n=\widehat{e}_nM$, for each $n\in\mathbb{Z}$. Such a structure of $A$-module is consequence of the isomorphism of $\Bbbk$-algebras $\widehat{e}_{n}\widehat{B}\widehat{e}_n\cong A$, whose action is given by 
	$$
	\left(\widehat{e}_{n}\widehat{b} \widehat{e}_n\right)\cdot\widehat{e}_n v:=\widehat{e}_{n}\widehat{b}\widehat{e}_nv,\ \ \ \text{for all } \widehat{b}\in\widehat{B},\ v\in M;
	$$
\item $A$-homomorphisms, for each $n\in\mathbb{Z}$
    $$
	\begin{array}{rccl}
	d^n_M:&B\otimes_A M^n & \longrightarrow & M^{n+1}\\
    &(\widehat{e}_{n+1}\widehat{b}\widehat{e}_n)\otimes\widehat{e}_n v & \longmapsto     & d^n_M\left((\widehat{e}_{n+1}\widehat{b}\widehat{e}_n)\otimes\widehat{e}_n v\right):=\widehat{e}_{n+1}\widehat{b}\widehat{e}_n v
    \end{array}
    $$
 which are well-defined due to the isomorphisms of $A$-$A$-bimodules $\widehat{e}_{m+1}\widehat{B}\widehat{e}_m\cong B$. Also, it is easy to check that $d^{n+1}_M(1\otimes d^n_M)=0$, for each $n\in\mathbb{Z}$, where $1:B\longrightarrow B$ is the identity map. 
\end{itemize}

Now, for any $\widehat{B}$-homomorphism $g:M\longrightarrow N$ we denote by $\mathcal{F}(g)$ the sequence $\left(g^n\right)_{n\in\mathbb{Z}}$ of $A$-homomorphisms 
    $$
	\begin{array}{rccl}
	g^n: & M^n & \longrightarrow & N^{n}\\
         & \widehat{e}_n v & \longmapsto     & g^n\left(\widehat{e}_nv\right):=\widehat{e}_ng(v)
    \end{array}
    $$






D. Hughes and J. Waschb\"usch in \cite{HW83} state that $\mathfrak{L}_{DA\otimes_A -}(A\textup{-mod})$ is the ca\-te\-go\-ry of $\widehat{DA}$-mod. In this subsection we will generalize that result, following the ideas of \cite{Gir18}. More precisely, we show that the  categories $\mathfrak{L}_{B\otimes_A -}(A\textup{-mod})$ and $\widehat{B}$-mod are isomorphic. To this end, we consider the functor 
    $$
    \begin{array}{ccl}
    \mathcal{G}:\mathfrak{L}^b_{B\otimes_A-}(A\textup{-mod}) & \longrightarrow & \widehat{B}\textup{-mod} \\
    \ \ \ \ \ M &\longmapsto & \mathcal{G}(M):=\bigoplus\limits_{n\in\mathbb{Z}}M^n\\
    \ \ \ \ \ \varphi &\longmapsto & \mathcal{G}(\varphi):=\bigoplus\limits_{n\in\mathbb{Z}}\varphi^n
    \end{array}
    $$
where the $\widehat{B}$-module structure of $\mathcal{G}(M)$ is given by 
	$$
	\widehat{a}\cdot\widehat{v}:= (a_nv_n+d^{n-1}_M\left(\varphi_{n-1}\otimes v_{n-1}\right))_{n\in\mathbb{Z}},\ \ \text{for all $\widehat{a}=\left(a_n,\varphi_n\right)_{n\in\mathbb{Z}}\in \widehat{B}$ and $\widehat{v}=\left(v_n\right)_{n\in\Z}\in\mathcal{G}(M)$.}
    $$

\begin{proposition}
\label{B[A]functors1}
The categories $\widehat{B}\textup{-mod}$ and $\mathfrak{L}^b_{B\otimes_A-}(A\textup{-mod})$ are isomorphic abelian categories, with isomorphisms (the covariant functors)  $\mathcal{F}: \widehat{B}\textup{-mod}\longrightarrow \mathfrak{L}^b_F(A\textup{-mod})$ and $\mathcal{G}:\mathfrak{L}^b_F(A\textup{-mod})\longrightarrow \widehat{B}\textup{-mod}$.
\end{proposition}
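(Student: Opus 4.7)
The plan is to verify that $\mathcal{F}$ and $\mathcal{G}$ are well-defined mutually inverse functors, and then deduce abelianity from a prior theorem. The heart of the argument is simply unpacking the matrix-like multiplication of $\widehat{B}$ and comparing it with the composition $d_M^{n+1}\circ(1\otimes d_M^n)$; once the bookkeeping is set up correctly, both the functoriality and the inverse relations fall out componentwise.

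First I would check that $\mathcal{F}$ lands in $\mathfrak{L}^b_{B\otimes_A-}(A\textup{-mod})$. Using the isomorphisms $\theta_n:A\to \widehat{e}_n\widehat{B}\widehat{e}_n$ and $\phi_n:B\to \widehat{e}_{n+1}\widehat{B}\widehat{e}_n$, each $M^n=\widehat{e}_nM$ inherits an $A$-module structure and $d_M^n$ is $A$-linear. The identity $d_M^{n+1}\circ(1\otimes d_M^n)=0$ translates, on an elementary tensor $(\widehat{e}_{n+2}\widehat{b}'\widehat{e}_{n+1})\otimes(\widehat{e}_{n+1}\widehat{b}\widehat{e}_n)\otimes \widehat{e}_n v$, into the assertion $(\widehat{e}_{n+2}\widehat{b}'\widehat{e}_{n+1})(\widehat{e}_{n+1}\widehat{b}\widehat{e}_n)\cdot v=0$, which is exactly the stipulation that products along the second sub-diagonal of $\widehat{B}$ are declared to vanish. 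Boundedness of $\mathcal{F}(M)$ follows because $M$ is finitely generated, hence meets only finitely many $\widehat{e}_n$. For morphisms, $g^n:=\widehat{e}_n g\widehat{e}_n$ is $A$-linear, and the relation $g^{n+1}d_M^n=d_N^n\bigl(1\otimes g^n\bigr)$ is immediate from $\widehat{B}$-linearity of $g$.

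Next I would verify that $\mathcal{G}$ is well-defined. The only non-formal point is checking that the prescribed formula
\[
\widehat{a}\cdot\widehat{v}=\bigl(a_n v_n+d_M^{n-1}(\varphi_{n-1}\otimes v_{n-1})\bigr)_{n\in\mathbb{Z}}
\]
defines an associative action. Associativity reduces, after expanding with the $\widehat{B}$-product rule $(a,\varphi)\cdot(a',\varphi')=(aa',a\varphi'+\varphi a')$, to the two identities: (i) $A$-linearity of each $d_M^{n}$ on both sides, and (ii) the vanishing $d_M^n(\varphi\otimes d_M^{n-1}(\varphi'\otimes v))=0$, which is exactly the condition $d_M^n\circ(1\otimes d_M^{n-1})=0$ built into objects of $\mathfrak{L}^b_{B\otimes_A-}(A\textup{-mod})$. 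Finitely-generatedness of $\mathcal{G}(M)$ as a $\widehat{B}$-module follows from boundedness of $M$ together with finite generation of each $M^n$ over $A$. For morphisms, $\mathcal{G}(\varphi)=\bigoplus \varphi^n$ is $\widehat{B}$-linear by direct inspection: the diagonal idempotent pieces act via $\varphi^n$'s $A$-linearity, and the off-diagonal pieces act via the $F$-commuting condition $\varphi^{n+1}d_M^n=d_N^n(1\otimes\varphi^n)$.

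Having both functors in place, I would check that $\mathcal{G}\mathcal{F}=1_{\widehat{B}\textup{-mod}}$ and $\mathcal{F}\mathcal{G}=1_{\mathfrak{L}^b_{B\otimes_A-}(A\textup{-mod})}$. The first follows from Lemma \ref{lem:PedroGerman}, which gives the canonical decomposition $M=\bigoplus\widehat{e}_n M$ recovering $\mathcal{G}(\mathcal{F}(M))$ as $\widehat{B}$-module, with the action on off-diagonal elements precisely retrieved by the differentials $d_M^n$ used to build $\mathcal{F}(M)$. The second direction is a tautology: applying $\mathcal{F}$ to $\mathcal{G}(M)$ returns the same components $M^n$ and the same $d_M^n$ by construction. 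Finally, for abelianity: $\widehat{B}\textup{-mod}$ is a module category, hence abelian; and $\mathfrak{L}^b_{B\otimes_A-}(A\textup{-mod})$ is abelian by Theorem \ref{thm:abelian}(iii), since $A\textup{-mod}$ is abelian and $B\otimes_A -$ is right exact, hence preserves cokernels.

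The main obstacle is the careful verification of associativity of the $\widehat{B}$-action defining $\mathcal{G}(M)$, where one must correctly match the matrix-product convention of $\widehat{B}$ (in particular the vanishing on the second sub-diagonal) with the cochain condition $d_M^{n+1}\circ F(d_M^n)=0$; once this dictionary is established, the remaining checks are essentially routine bookkeeping on homogeneous components.
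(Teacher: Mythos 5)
Your proposal is correct and follows essentially the same route as the paper: construct $\mathcal{F}$ and $\mathcal{G}$, verify componentwise that they are mutually inverse (using Lemma \ref{lem:PedroGerman} for $\mathcal{GF}=1$), and invoke Theorem \ref{thm:abelian} for abelianity. The only difference is one of emphasis: you spell out the well-definedness checks (associativity of the $\widehat{B}$-action, boundedness, finite generation) inside the proof, whereas the paper handles those in the discussion preceding the proposition and devotes the proof itself to the two inverse identities.
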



\begin{proof}
Let us consider $M\in\widehat{B}$-mod and $g\in\Hom_{\widehat{B}\textup{-mod}}(M,N)$. Hence, 
	\begin{equation}\label{eq:isom}
	\mathcal{G}\mathcal{F}(M)=\mathcal{G}\left(\left(M^n,d^{n}_M\right)_{n\in\Z}\right)=\bigoplus_{n\in\Z}M^n=\bigoplus_{n\in\Z}\widehat{e}_nM=M.
	\end{equation}
From Lemma~\ref{lem:PedroGerman} we have the last equality in \ref{eq:isom}. Since we can consider any $v\in M$ of the form $v=\left(\widehat{e}_nv_n\right)_{n\in\Z}\in\bigoplus_{n\in\Z}\widehat{e}_nM=M$, it follows that
	\begin{align*}
	\left(\mathcal{G}\mathcal{F}\left(g\right)\right)\left(v\right)
	&=\widehat{\left(\left(g^{n}\right)_{n\in\Z}\right)}(v)=\left(g^{n}\left(\widehat{e}_nv_n\right)\right)_{n\in\Z}=\left(\widehat{e}_ng\left(v_n\right)\right)_{n\in\Z}=\left(g\left(\widehat{e}_nv_n\right)\right)_{n\in\Z}=g(v).
	\end{align*}
Therefore, $\mathcal{G}\mathcal{F}=1_{\widehat{B}\textup{-mod}}$ is the identity functor of $\widehat{B}\textup{-mod}$.

On the other hand, consider $M=(M^n, d^n_M)$ and $N=(N^n, d^n_N)$ two objects in $\mathfrak{L}^b_{B\otimes_A-}(A\textup{-mod})$. To facilitate the exposition, we denote $\mathcal{G}(M)$ by $\widehat{M}$. Thus,
	$$
	\mathcal{F}\mathcal{G}(M)=\mathcal{F}(\widehat{M})=\left(\widehat{M}^n,d^{n}_{\widehat{M}}\right)_{n\in\Z}.
	$$
Since $\widehat{e}_n=\left(\delta_{nm},0\right)_{m\in\Z}\in\widehat{B}$ for all $n\in\Z$, from the action of $\widehat{B}$ over $\widehat{M}$ we obtain that
	$$
	\widehat{M}^n= \widehat{e}_n\widehat{M}=\widehat{e}_n\left(\bigoplus_{i\in\Z}M^i\right)=M^n
	$$
and for each $\widehat{a}=\left(a_n,b_n\right)_{n\in\Z}\in\widehat{B}$ and $ v=\left(v_n\right)_{n\in\Z}\in \widehat{M}$, we have
	\begin{align*}
	d^n_{\widehat{M}}\left(\left(\widehat{e}_{n+1}\widehat{a}\widehat{e}_n\right)\otimes\widehat{e}_n v\right)	&=\widehat{e}_{n+1}\widehat{a}\widehat{e}_n v\\
	&= \left(0,\delta_{nm}b_m\right)_{m\in\Z}\cdot\left(v_n\right)_{n\in\Z}\\
	&=\left(d_M^{m-1}\left(\delta_{n,m-1}b_{m-1}\otimes v_{m-1}\right)\right)_{m\in\Z}\\
	&=d^n_M\left(b_n\otimes v_n\right)\\
    &=d^n_M\left(\left(\widehat{e}_{n+1}\widehat{a}\widehat{e}_n\right)\otimes\widehat{e}_n v\right).
	\end{align*}
Therefore, $
	\mathcal{F}\mathcal{G}(M)=\left(\widehat{M}^n,d^n_{\widehat{M}}\right)_{n\in\Z}=\left(M^n,d^n_M\right)_{n\in\Z}=M.
	$
 
Finally, for any morphism $\varphi=(\varphi_n): M\longrightarrow N$ in $\mathfrak{L}_F(A\textup{-mod})$ we find
	$$
	\mathcal{F}\mathcal{G}\left(\varphi\right)=\mathcal{F}\left(\widehat{\varphi}\right)=\left(\widehat{\varphi}^n\right)_{n\in\Z}=\left(\varphi^n\right)_{n\in\Z}=\varphi
	$$
because,  $\widehat{\varphi}^n\left(v\right)=\widehat{\varphi}^n\left(\widehat{e}_n v\right)=\widehat{e}_n\widehat{\varphi}\left( v\right)=\widehat{e}_n\varphi^n\left(v\right)=\varphi^n\left(v\right)$, for each $n\in\Z$ and for all $v\in M^n$. In conclusion, $\mathcal{F}\mathcal{G}=1_{\mathfrak{L}_F(A\textup{-Mod})}$ is the identity functor of $\mathfrak{L}_F(A\textup{-Mod})$, which completes the proof.

\end{proof}




Since $A\otimes_A-=1_{A\textup{-mod}}:A\textup{-mod}\longrightarrow A\textup{-mod}$ and the fact that  $\mathfrak{L}^b_{1_{A\textup{-mod}}}(A\textup{-mod})=\textup{Ch}(A\textup{-mod})$ is  the category of cochain complex of $A\textup{-mod}$ (Example \ref{example:L_F(C)}(i)), we obtain the following result.


\begin{corollary}
The categories $\widehat{A}\textup{-mod}$ and complexes cochain $\textup{Ch}(A\textup{-mod})$ are isomorphic. 
\end{corollary}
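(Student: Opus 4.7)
The plan is to observe that this corollary is essentially a specialization of Proposition~\ref{B[A]functors1} combined with the identification of the identity functor with the endofunctor $A\otimes_A -$. First, I would apply Proposition~\ref{B[A]functors1} with $B=A$, where $A$ is regarded as an $A$-$A$-bimodule via its regular left and right actions. With this choice, the algebra $\widehat{B}$ specializes to the algebra denoted $\widehat{A}$ in the statement, and Proposition~\ref{B[A]functors1} yields an isomorphism of abelian categories
\[
\widehat{A}\textup{-mod} \;\cong\; \mathfrak{L}^b_{A\otimes_A -}\bigl(A\text{-}\textup{mod}\bigr)
\]
realized by the mutually inverse functors $\mathcal{F}$ and $\mathcal{G}$ constructed there.

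Next, I would identify the endofunctor $A\otimes_A -:A\text{-}\textup{mod}\longrightarrow A\text{-}\textup{mod}$ with the identity endofunctor $1_{A\text{-}\textup{mod}}$ via the canonical natural isomorphism $A\otimes_A M\xrightarrow{\sim} M$, $a\otimes m\mapsto am$. By Remark~\ref{rem:LR}, naturally isomorphic endofunctors yield isomorphic categories on the $\mathfrak{L}$-side, hence
\[
\mathfrak{L}^b_{A\otimes_A -}\bigl(A\text{-}\textup{mod}\bigr)\;\cong\;\mathfrak{L}^b_{1_{A\text{-}\textup{mod}}}\bigl(A\text{-}\textup{mod}\bigr).
\]

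Finally, Example~\ref{example:L_F(C)}(i) asserts that $\mathfrak{L}_{1_\mathcal{C}}(\mathcal{C})$ coincides with the category of cochain complexes $\textup{Ch}(\mathcal{C})$; restricting to the full subcategory of objects with almost all terms zero yields $\mathfrak{L}^b_{1_{A\text{-}\textup{mod}}}(A\text{-}\textup{mod})=\textup{Ch}(A\text{-}\textup{mod})$ in the bounded sense used throughout. Composing the three identifications gives the desired isomorphism of categories. There is no substantive obstacle: all the work was carried out in Proposition~\ref{B[A]functors1}, and the corollary amounts to specializing $B$ and applying the standard identification $A\otimes_A - \cong 1_{A\text{-}\textup{mod}}$.
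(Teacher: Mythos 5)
Your proposal is correct and follows essentially the same route as the paper: specialize Proposition~\ref{B[A]functors1} to $B=A$, identify $A\otimes_A-$ with the identity endofunctor, and invoke Example~\ref{example:L_F(C)}(i). Your version is in fact slightly more careful than the paper's one-line justification, since you explicitly route the identification $A\otimes_A-\cong 1_{A\text{-}\textup{mod}}$ through Remark~\ref{rem:LR} rather than treating it as a literal equality, and you note the bounded/unbounded distinction that the paper glosses over.
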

 

\begin{remark}\label{rem:repgen}\rm 
Based on the construction presented in \cite[Lemma 3.7]{FGRV},  we can transfer certain important objects and properties from the category $\widehat{A}$-mod, of the finitely generated modules over the repetitive algebra, to the newly derived algebra $\widehat{B}$-mod for certain $A$-bimodule $B$. Specifically, the authors in \cite{FGRV} employ a technique involving the lifting of orthogonal primitive idempotents to obtain projective, injective, and indecomposable modules, along with a class of morphisms associated with them. These elements are lifted from the repetitive algebra $\widehat{A}$ to a new algebra $R\widehat{A}=R\otimes_{\Bbbk}\widehat{A}$, which looks like a repetitive algebra. Here, $R$ denotes a local, complete, commutative, Noetherian $\Bbbk$-algebra with residual field $\Bbbk$. This technique of lifting can be translated word-to-word to the category $\widehat{B}$-mod to obtain similar results over the classes of projective, injective, and indecomposable modules.
\end{remark}

Clearly, the Nakayama's functor belong to $\textup{End}_{re}(A\text{-}\textup{mod})$ associated to $A$-$A$-bimodule $D(A):=\text{Hom}_{\Bbbk}(A,\Bbbk)$, the dual of $A$. Thus, the general claim is


\begin{theorem}
\label{prop:class}
Let $A$ be finite-dimensional algebra on the field $\Bbbk$ and $F:A\textup{-mod} \longrightarrow A\textup{-mod}$ a right exact endofunctor. If $F$ is $\otimes$-representable by $B\in A$-bimod then $\mathfrak{L}_F(A\text{-}\textup{mod})$ is isomorphic to $\mathfrak{L}_{B\otimes_A (-)}(A\text{-}\textup{mod})$, the repetitive algebra generated by $B$.
\end{theorem}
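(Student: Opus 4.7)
The plan is to reduce the statement to earlier results, specifically exploiting the contravariant functor $\mathfrak{L}$ from Theorem~\ref{thm:LR} together with the invariance property for naturally isomorphic endofunctors recorded in Remark~\ref{rem:LR}. By hypothesis, $F$ is $\otimes$-representable by $B\in A$-bimod, meaning there exists a natural isomorphism $\mu:F\longrightarrow B\otimes_A(-)$ in $\textup{End}(A\textup{-mod})$. So the theorem becomes an assertion about two endofunctors that are already naturally isomorphic.

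Concretely, I would apply $\mathfrak{L}$ to $\mu$ and to $\mu^{-1}$, obtaining covariant functors $\mathfrak{L}_{\mu}(A\textup{-mod}):\mathfrak{L}_{B\otimes_A(-)}(A\textup{-mod})\longrightarrow\mathfrak{L}_F(A\textup{-mod})$ and $\mathfrak{L}_{\mu^{-1}}(A\textup{-mod})$ in the opposite direction. Using functoriality of $\mathfrak{L}$ (Theorem~\ref{thm:LR}) together with $\mu\circ\mu^{-1}=1_{B\otimes_A(-)}$ and $\mu^{-1}\circ\mu=1_F$, these two functors are mutually inverse. Explicitly, the isomorphism sends an object $M=(M^n,d^n_M)_{n\in\mathbb{Z}}\in\mathfrak{L}_F(A\textup{-mod})$ to $(M^n,d^n_M\circ\mu^{-1}_{M^n})_{n\in\mathbb{Z}}\in\mathfrak{L}_{B\otimes_A(-)}(A\textup{-mod})$, and acts identically on components of morphisms.

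The only verification needed is that this assignment is well-defined, i.e., that the new differential $d^n_M\circ\mu^{-1}_{M^n}$ still satisfies the composition-zero condition and that $\mathfrak{L}_F$-morphisms transform into $\mathfrak{L}_{B\otimes_A(-)}$-morphisms. Both are immediate consequences of the naturality square for $\mu$ (equivalently $\mu^{-1}$), which has already been carried out in the general setting of Subsection~\ref{subsec:natfunct} when establishing the functor $\mathfrak{L}_{\Psi}(\mathcal{C})$. Hence no fresh diagram chase is required; the present theorem is precisely the specialization of that construction to $\Psi=\mu$ and $\mathcal{C}=A\textup{-mod}$.

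There is therefore no substantial obstacle: the result is essentially a corollary of Theorem~\ref{thm:LR} combined with the defining property of $\otimes$-representability. For completeness, one could further append the chain of isomorphisms $\mathfrak{L}^b_F(A\textup{-mod})\cong \mathfrak{L}^b_{B\otimes_A(-)}(A\textup{-mod})\cong \widehat{B}\textup{-mod}$, where the last isomorphism is Proposition~\ref{B[A]functors1}, thereby identifying $\mathfrak{L}^b_F(A\textup{-mod})$ with the module category of the explicit infinite matrix algebra $\widehat{B}$, i.e.\ the repetitive algebra generated by $B$.
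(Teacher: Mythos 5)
Your proposal is correct and follows essentially the same route as the paper: the authors likewise reduce the theorem to the functoriality of the correspondence $\Psi\mapsto\mathfrak{L}_{\Psi}(\mathcal{C})$ from Subsection~\ref{subsec:natfunct} applied to the natural isomorphism $F\cong B\otimes_A(-)$, exactly as you do (cf.\ Remark~\ref{rem:LR}). Your explicit description of the mutually inverse functors and the optional identification with $\widehat{B}$-mod via Proposition~\ref{B[A]functors1} simply spell out details the paper leaves implicit.
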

\begin{proof}
Since, by hypothesis, $F$ is naturally isomorphic to $B\otimes_A (-)$, the conclusion is a consequence of the functorial properties of the correspondence described in Subsection \ref{subsec:natfunct}.

\end{proof}


\begin{corollary}\label{cor:KS}
Let $\mathcal{C}$ be a finite $\Bbbk$-category. If $F\in\textup{End}_{\Bbbk}(\mathcal{C})$ is a right exact endofunctor then $\mathfrak{L}_F(\mathcal{C})$ is a Krull--Schmidt category, with enough projectives and injectives.
\end{corollary}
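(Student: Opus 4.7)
The plan is to reduce to modules over the generalized repetitive algebra $\widehat{B}$ of Subsection~\ref{subsec:GRA}. Since $\mathcal{C}$ is a finite $\Bbbk$-category, it is $\Bbbk$-linearly equivalent to $A\textup{-mod}$ for some finite-dimensional $\Bbbk$-algebra $A$, so Corollary~\ref{cor:Equiv} lets me replace $(\mathcal{C},F)$ by $(A\textup{-mod},F')$ with $F'$ right exact and $\Bbbk$-linear. By Proposition~\ref{prop:rightrep}, $F'\cong B\otimes_A-$ for some $A$-$A$-bimodule $B$, and combining Theorem~\ref{prop:class} with the natural extension of Proposition~\ref{B[A]functors1} to arbitrary (not necessarily bounded) objects yields an isomorphism $\mathfrak{L}_F(\mathcal{C})\cong\widehat{B}\textup{-mod}$, where objects are $\widehat{B}$-modules $M=\bigoplus_{n\in\mathbb{Z}}\widehat{e}_nM$ with each $\widehat{e}_nM$ finitely generated over $A$. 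By Theorem~\ref{thm:abelian}(iii), $\mathfrak{L}_F(\mathcal{C})$ is already abelian.

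For the Krull--Schmidt property, the key observation is that $\widehat{B}$ is a locally bounded $\Bbbk$-algebra in the sense of Bongartz--Gabriel: every $\widehat{e}_m\widehat{B}\widehat{e}_n$ is finite-dimensional and $\{\widehat{e}_n\}_{n\in\mathbb{Z}}$ is a complete orthogonal system of idempotents. Hence the endomorphism ring of any indecomposable object in $\widehat{B}\textup{-mod}$ (with components of finite $A$-length) is a local finite-dimensional $\Bbbk$-algebra, and each object decomposes uniquely up to order and isomorphism as a direct sum of indecomposables; transporting this decomposition along the isomorphism above yields the Krull--Schmidt property on $\mathfrak{L}_F(\mathcal{C})$.

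For enough projectives, given $M=(M^n,d_M^n)\in\mathfrak{L}_F(\mathcal{C})$, I choose a projective cover $p^n:P^n\twoheadrightarrow M^n$ in $A\textup{-mod}$ for every $n\in\mathbb{Z}$. By Theorem~\ref{proj&inj}(i) each $\Sigma^n(1_{F(P^n)})$ is projective in $\mathfrak{L}_F(\mathcal{C})$, and since only the summands with $k=n$ and $k=n-1$ contribute at position $n$, the coproduct $P:=\bigoplus_{n\in\mathbb{Z}}\Sigma^n(1_{F(P^n)})$ is a well-defined object of $\mathfrak{L}_F(\mathcal{C})$ with component $P^n\oplus F(P^{n-1})$ at each position $n$. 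The morphism $\varphi:P\to M$ given at position $n$ by $(p^n,\,d_M^{n-1}\circ F(p^{n-1})):P^n\oplus F(P^{n-1})\to M^n$ is $F$-commutative by a direct computation that uses $d_M^n\circ F(d_M^{n-1})=0$, and componentwise surjective because every $p^n$ is; this furnishes the required projective cover.

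For enough injectives, the functor $F\cong B\otimes_A-$ admits the right adjoint $G=\textup{Hom}_A(B,-)$, so Theorem~\ref{thm:adjoint} provides an isomorphism $\mathfrak{L}_F(\mathcal{C})\cong\mathfrak{R}_G(\mathcal{C})$. Starting from injective envelopes $\iota^n:M^n\hookrightarrow I^n$ in $A\textup{-mod}$ and using the injective objects $\Sigma^k(1_{G(I)})$ supplied by Theorem~\ref{proj&inj}(ii), the construction dual to the preceding paragraph embeds any object of $\mathfrak{R}_G(\mathcal{C})$ into a direct sum of such injectives, so $\mathfrak{L}_F(\mathcal{C})$ has enough injectives. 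The main technical obstacle I anticipate is formulating the unbounded version of Proposition~\ref{B[A]functors1} precisely enough to legitimately invoke Krull--Schmidt for locally bounded $\Bbbk$-algebras; an alternative, avoiding external theory, would verify directly that the endomorphism ring of an indecomposable object of $\mathfrak{L}_F(\mathcal{C})$ is local by exploiting the finite-dimensionality of each $\textup{Hom}_A(M^n,N^n)$ together with the $F$-commutativity constraints.
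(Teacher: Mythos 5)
Your proposal is correct in substance and follows the paper's reduction skeleton --- finite $\Bbbk$-category to $A\textup{-mod}$ via Corollary~\ref{cor:Equiv}, then $\otimes$-representability and the identification with modules over the generalized repetitive algebra $\widehat{B}$ --- but it diverges exactly where the real content lies. The paper disposes of the Krull--Schmidt property and of enough projectives and injectives by citing Theorem~\ref{prop:class}, Proposition~\ref{B[A]functors1} and, crucially, Remark~\ref{rem:repgen}, i.e.\ by transporting these facts from $\widehat{A}\textup{-mod}$ to $\widehat{B}\textup{-mod}$ through the idempotent-lifting technique of \cite{FGRV}; you instead argue directly, observing that $\widehat{B}$ is locally bounded (each $\widehat{e}_m\widehat{B}\widehat{e}_n$ is $A$, $B$ or $0$) to obtain unique decompositions, and constructing an explicit projective epimorphism $\bigoplus_{n}\Sigma^n\big(1_{F(P^n)}\big)\longrightarrow M$ from degreewise projective covers, with the dual construction in $\mathfrak{R}_G(\mathcal{C})$ (via Theorem~\ref{thm:adjoint}) for injectives. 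Your verification that $\varphi^n=\big(p^n,\,d_M^{n-1}\circ F(p^{n-1})\big)$ is $F$-commutative is exactly right, using only $d_M^{n}\circ F(d_M^{n-1})=0$; and since each $\Sigma^k$ is supported in two degrees, the degreewise-finite coproduct coincides with the product, so it is projective (resp.\ injective) as required. What your route buys is self-containedness: the appeal to Remark~\ref{rem:repgen} is the vaguest step of the paper's own proof, and your constructions replace it. Two caveats, both of which you partly flag and both of which the paper shares: Proposition~\ref{B[A]functors1} is stated only for the bounded category $\mathfrak{L}^b$, so the passage to unbounded objects genuinely needs the extension you describe; and for objects with infinite support the endomorphism ring of an indecomposable is local but need not be finite-dimensional, so ``Krull--Schmidt'' must be read in the Azumaya sense of unique, possibly infinite, decompositions --- such objects are not finite direct sums of indecomposables, and your phrase ``local finite-dimensional $\Bbbk$-algebra'' is an overstatement there.
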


\begin{proof}
Since $\mathcal{C}$ is a finite $\Bbbk$-category is well-known that $\mathcal{C}$ it is equivalent to the category $A$-mod of finite dimensional modules over a certain finite dimensional $\Bbbk$-algebra $A$. In consequence, by Corollary \ref{cor:Equiv}, there exists a unique right exact endofunctor $F_0$ on $A$-mod, such that we obtain an equivalence of categories $\mathfrak{L}_F(\mathcal{C})\simeq \mathfrak{L}_{F_0}(A\textup{-mod})$. Thus, by Theorem \ref{prop:class}, Proposition \ref{B[A]functors1} and Remark \ref{rem:repgen}, we obtain that $\mathfrak{L}_{F_0}(A\textup{-mod})$ is a Krull--Schmidt category, with enough projectives and injectives, {\it a fortiori}, $\mathfrak{L}_F(\mathcal{C})$ satisfies the same properties.

\end{proof}




\begin{corollary}\label{cor:Frob}
Let $A$ be a finite dimensional $\Bbbk$-algebra. If $F\in\textup{End}_{\Bbbk}(A\text{-}\textup{mod})$ is a right exact endofunctor
which is $\otimes$-representable by an injective $A$-bimodule $I$, then $\mathfrak{L}_F(A\text{-}\textup{mod})$ is a Frobenius category.
\end{corollary}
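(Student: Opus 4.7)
The plan is to exploit the adjoint-pair machinery of Theorem \ref{proj&inj} together with the equivalence of Proposition \ref{prop:proj-inj}. Since $F$ is right exact and $\otimes$-representable by the injective bimodule $I$, the remark after Proposition \ref{prop:rightrep} yields an adjoint pair $(F,G)$ with $G = \textup{Hom}_A(I,-)$, and by Proposition \ref{prop:proj-inj} the functors $F$ and $G$ restrict to mutually inverse equivalences $\textup{proj}\,A \leftrightarrow \textup{inj}\,A$. In particular, the unit $\eta_P : P \to GF(P)$ is an isomorphism whenever $P$ is projective, and the counit $\varepsilon_J : FG(J) \to J$ is an isomorphism whenever $J$ is injective.

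Combining Theorem \ref{prop:class}, Proposition \ref{B[A]functors1} and Corollary \ref{cor:KS}, we know that $\mathfrak{L}_F(A\textup{-mod}) \cong \widehat{I}\textup{-mod}$ is a Krull--Schmidt abelian category with enough projectives and injectives. Adapting Giraldo's classification (referenced in the remark after Corollary \ref{cor:projtype}) via the lifting technique of \cite{FGRV} outlined in Remark \ref{rem:repgen}, every indecomposable projective has the two-degree form $\Sigma^k(1_{F(P)})$ for $P$ an indecomposable projective of $A\textup{-mod}$, while every indecomposable injective has the form $\Sigma^k(\varepsilon_J)$ for $J$ an indecomposable injective of $A\textup{-mod}$.

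The core calculation is then to match these two families. Given a projective $P \in A\textup{-mod}$, set $J := F(P)$, which is injective by Proposition \ref{prop:proj-inj}. The triangle identity $\varepsilon_{F(P)} \circ F(\eta_P) = 1_{F(P)}$ shows that the component pair $(\eta_P, 1_{F(P)})$ defines a morphism $\Sigma^k(1_{F(P)}) \to \Sigma^k(\varepsilon_{F(P)})$ in $\mathfrak{L}_F(A\textup{-mod})$; both components are isomorphisms (the first by the equivalence, the second trivially), so $\Sigma^k(1_{F(P)}) \cong \Sigma^k(\varepsilon_{F(P)})$. Symmetrically, for $J \in \textup{inj}\,A$, setting $P := G(J)$ gives $\Sigma^k(\varepsilon_J) \cong \Sigma^k(1_{F(G(J))})$ via the pair $(1_{G(J)}, \varepsilon_J)$. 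Hence the classes of indecomposable projectives and indecomposable injectives coincide, and $\mathfrak{L}_F(A\textup{-mod})$ is Frobenius.

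The main obstacle I anticipate is the extension of Giraldo's explicit classification of indecomposable projectives—stated for the specific case $F = DA \otimes_A -$—to the case of an arbitrary injective bimodule $I$. This should follow by running the idempotent-lifting argument of \cite{FGRV} in the algebra $\widehat{I}$ rather than $\widehat{A}$, but verifying that the resulting complete list of indecomposable projective (resp. injective) modules is precisely $\{\Sigma^k(1_{F(A\varepsilon)})\}$ (resp. $\{\Sigma^k(\varepsilon_{D(\varepsilon' A)})\}$), with $\varepsilon, \varepsilon'$ running over primitive idempotents and $k$ over $\mathbb{Z}$, will require some care. Once this structural input is in place, the remaining step reduces to the adjunction identity above and is essentially formal.
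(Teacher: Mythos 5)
Your proposal is correct and follows the same skeleton as the paper's proof: both rest on Corollary \ref{cor:KS} for enough projectives and injectives, on lifting Giraldo's classification of indecomposable projectives (and its dual for injectives) to $\widehat{I}$-mod via Proposition \ref{B[A]functors1} and Remark \ref{rem:repgen}, and on Proposition \ref{prop:proj-inj} to pair projectives of $A$-mod with injectives. The one genuine difference is in the matching step: where the paper simply cites Happel's Lemma 2.2 as a result to be ``lifted,'' you carry out the identification explicitly, checking that $(\eta_P,1_{F(P)})$ is a morphism $\Sigma^k\bigl(1_{F(P)}\bigr)\to\Sigma^k\bigl(\varepsilon_{F(P)}\bigr)$ via the triangle identity $\varepsilon_{F(P)}\circ F(\eta_P)=1_{F(P)}$ and that both components are isomorphisms (using that the adjunction restricts to an adjoint equivalence $\textup{proj}\,A\leftrightarrow\textup{inj}\,A$, so $\eta_P$ is invertible for $P$ projective). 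This computation is sound --- the compatibility condition $\varphi^{k+1}\circ d^k_M=d^k_N\circ F(\varphi^k)$ reduces exactly to the triangle identity --- and it makes the argument more self-contained than the paper's citation. The gap you flag, namely that the complete list of indecomposable projectives (resp.\ injectives) is exhausted by the $\Sigma^k\bigl(1_{F(P)}\bigr)$ (resp.\ $\Sigma^k(\varepsilon_J)$), is real but is equally present in the paper's own proof, which likewise defers it to the lifting technique of \cite{FGRV}; so your proposal is no less complete than the published argument on this point.
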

\begin{proof}
By Corollary \ref{cor:KS} we obtain that $\mathfrak{L}_F(A\text{-}\textup{mod})$ has enough projectives and injectives. Now, using the natural equivalence of Proposition \ref{prop:proj-inj} and the isomorphism by Proposition \ref{B[A]functors1}, we can lift (see Remark \ref{rem:repgen}) the following two results to the context of the category $\widehat{I}$-mod:
\begin{itemize}
\item \cite[Proposition 6]{Gir18} characterizes the indecomposable projective modules on $\widehat{I}$-mod.
\item \cite[Lemma 2.2]{H88} establishes the equivalence between projective indecomposable modules with injective indecomposable modules on $\widehat{I}$-mod, using the aforementioned equivalence between projectives and injectives modules.
\end{itemize}
Thus, we conclude that $\mathfrak{L}_{I\otimes_A-}(A\text{-}\textup{mod})$ is a Frobenius category.

\end{proof}

\subsection{Some geometric applications}
\label{geom_app}

If $A$ is a commutative $\Bbbk$-algebra over a field $\Bbbk$ with unity, the category $A$\textup{-biMod} contains the full monoidal subcategory $A$\textup{-Mod} (resp. $A$\textup{-mod}) of (resp. finitely generated) modules, regarded as $A$-bimodules where the left and right actions coincide. Actually, $A$\textup{-Mod} (resp. $A$\textup{-mod}) is the particular case of the geometric strict monoidal category $\textbf{QCoh}(X)$ (resp. $\textbf{Coh}(X)$) of quasi-coherent (resp. coherent) sheaves on the algebraic scheme $X$. Indeed, if $X$ is an affine algebraic scheme then $\textbf{QCoh}(X)=A\textup{-Mod}$ (resp. $\textbf{Coh}(X)=A\textup{-mod}$) where $A=\mathcal{O}_X(X)$. Under above notations and following the ideas and results in \cite{BC}, we obtain the analogous result of Proposition \ref{prop:class} in the commutative case. Recall that, $F\in\text{End}(\textbf{QCoh}(X))$ is a \textit{cocontinuous} endofunctor if this preserves colimits.


\begin{theorem}
\label{thm:schemes}
Let $X$ be quasi-compact and quasi-separated scheme and $F\in\textup{End}(\textup{\textbf{QCoh}}(X))$ a cocontinuous, symmetric, monoidal endofunctor on the tensor category $\textup{\textbf{QCoh}}(X)$ of quasi-coherent sheaves on $X$. Then there exists an scheme--endomorphism $f:X\longrightarrow X$ such that $\mathfrak{L}_{F}(\textup{\textbf{QCoh}}(X))$ is isomorphic to $\mathfrak{L}_{f^{\ast}}(\textup{\textbf{QCoh}}(X))$, where $f^{\ast}:\textup{\textbf{QCoh}}(X)\longrightarrow \textup{\textbf{QCoh}}(X)$ is the canonical pullback endofunctor induced by $f$.
\end{theorem}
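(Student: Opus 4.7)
The plan is to combine a Tannakian-style reconstruction theorem for schemes with the transport results of Section~\ref{sec:LF(C)vsEnd(C)}. First I would invoke the main theorem of Brandenburg--Chirvasitu \cite{BC}: for a quasi-compact and quasi-separated scheme $X$, the assignment $f\longmapsto f^{\ast}$ establishes an equivalence between the category of scheme endomorphisms $f:X\longrightarrow X$ (with natural transformations) and the category of cocontinuous symmetric monoidal endofunctors of $\mathbf{QCoh}(X)$ (with monoidal natural isomorphisms). Applied to the given endofunctor $F$, this yields a scheme endomorphism $f:X\longrightarrow X$ together with a monoidal natural isomorphism $\Psi:F\longrightarrow f^{\ast}$.

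Next I would transport $\Psi$ through the functorial construction $\mathfrak{L}$ introduced in Subsection~\ref{subsec:natfunct}. Since $\mathbf{QCoh}(X)$ is an additive (indeed, Grothendieck abelian) category, the hypotheses of Theorem~\ref{thm:LR} are satisfied, so that $\Psi$ induces a functor $\mathfrak{L}_{\Psi}(\mathbf{QCoh}(X)):\mathfrak{L}_{f^{\ast}}(\mathbf{QCoh}(X))\longrightarrow \mathfrak{L}_{F}(\mathbf{QCoh}(X))$. Because $\Psi$ is a natural isomorphism, the inverse natural transformation $\Psi^{-1}:f^{\ast}\longrightarrow F$ furnishes a candidate inverse $\mathfrak{L}_{\Psi^{-1}}(\mathbf{QCoh}(X))$, and by functoriality these two assignments compose to the respective identities, as explicitly observed in Remark~\ref{rem:LR}. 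Consequently, $\mathfrak{L}_{F}(\mathbf{QCoh}(X)) \cong \mathfrak{L}_{f^{\ast}}(\mathbf{QCoh}(X))$, which is the desired conclusion.

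The proof is therefore essentially a one-step deduction from the deep reconstruction theorem of \cite{BC} followed by the purely formal invariance principle of Section~\ref{sec:LF(C)vsEnd(C)}. The only substantive point (and the main conceptual obstacle) is to verify that our standing hypotheses on $F$---cocontinuity, symmetry, and monoidality---match verbatim the conditions required to trigger Brandenburg--Chirvasitu's theorem. These hypotheses have been chosen precisely for this purpose, so once this compatibility is noted, both the existence of the endomorphism $f$ and the isomorphism between the two $F$-cochain complex categories follow with no further technical effort.
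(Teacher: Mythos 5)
Your proposal is correct and follows essentially the same route as the paper: invoke the Brandenburg--Chirvasitu reconstruction theorem to produce $f$ with a natural isomorphism $F\cong f^{\ast}$, then transport this isomorphism through the functor $\mathfrak{L}$ of Theorem~\ref{thm:LR} (as recorded in Remark~\ref{rem:LR}) to obtain $\mathfrak{L}_{F}(\textbf{QCoh}(X))\cong\mathfrak{L}_{f^{\ast}}(\textbf{QCoh}(X))$. Your write-up in fact spells out the inverse functor $\mathfrak{L}_{\Psi^{-1}}$ more explicitly than the paper does.
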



\begin{proof}
By \cite[Theorem 3.4.3, p.684]{BC} we obtain the analogous equivalence of Proposition \ref{prop:rightrep} in the scheme-theoretical context. The conclusion of theorem is a consequence from functor $\mathfrak{L}$.

\end{proof}


\begin{corollary}
Let $X$ be a noetherian algebraic scheme and $F\in\textup{End}_{re}(\textup{\textbf{Coh}}(X))$, a right exact endofunctor on the category $\textup{\textbf{Coh}}(X)$ of the coherent sheaves. Then there exists an endomorphism $f:X\longrightarrow X$ such that 
$\mathfrak{L}_{F}(\textup{\textbf{Coh}}(X))$ is isomorphic to $\mathfrak{L}_{f^{\ast}}(\textup{\textbf{Coh}}(X))$, where 
$f^{\ast}:\textup{\textbf{Coh}}(X)\longrightarrow \textup{\textbf{Coh}}(X)$ is the canonical pullback functor induced by $f$. In particular, if $A$ is a commutative noetherian $\Bbbk$-algebra and $F\in\textup{End}_{re}(A\textup{-mod})$ then there exists $f\in\textup{End}_A(A)$ such that $\mathfrak{L}_F(A\textup{-mod})$ is isomorphic to $\mathfrak{L}_{f^{\ast}}(A\textup{-mod})$, with $f^{\ast}:A\textup{-mod}\longrightarrow A\textup{-mod}$ is the base-change functor induced by $f$, i.e., $M\longmapsto M\otimes_{A,f}A$.
\end{corollary}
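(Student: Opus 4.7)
The plan is to reduce the corollary to Theorem~\ref{thm:schemes} by extending $F$ from $\textup{\textbf{Coh}}(X)$ to $\textup{\textbf{QCoh}}(X)$. Since $X$ is noetherian, the category $\textup{\textbf{QCoh}}(X)$ is the ind-completion of $\textup{\textbf{Coh}}(X)$, so any additive right exact endofunctor $F$ on $\textup{\textbf{Coh}}(X)$ admits a unique cocontinuous extension $\widetilde{F}:\textup{\textbf{QCoh}}(X) \longrightarrow \textup{\textbf{QCoh}}(X)$, defined on a filtered diagram of coherent subsheaves by $\widetilde{F}(\varinjlim \mathcal{F}_i):=\varinjlim F(\mathcal{F}_i)$. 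This extension preserves colimits by construction and inherits any symmetric monoidal structure carried by $F$, placing us in the setting of Theorem~\ref{thm:schemes}.

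Applying Theorem~\ref{thm:schemes} to $\widetilde{F}$ produces an endomorphism $f:X\longrightarrow X$ together with a natural isomorphism $\mu:\widetilde{F}\longrightarrow f^{*}$ of endofunctors of $\textup{\textbf{QCoh}}(X)$. Since $X$ is noetherian, the pullback $f^{*}$ restricts to an endofunctor of $\textup{\textbf{Coh}}(X)$, and the restriction of $\mu$ to coherent sheaves yields a natural isomorphism $F \cong f^{*}|_{\textup{\textbf{Coh}}(X)}$.

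The desired isomorphism of categories now follows from the functorial machinery of Subsection~\ref{subsec:natfunct}: by Theorem~\ref{thm:LR} together with the invariance under natural isomorphism recorded in Remark~\ref{rem:LR}, naturally isomorphic endofunctors induce isomorphic categories $\mathfrak{L}_{(-)}$, whence $\mathfrak{L}_{F}(\textup{\textbf{Coh}}(X))\cong\mathfrak{L}_{f^{*}}(\textup{\textbf{Coh}}(X))$. The affine specialization is then immediate: for $X=\textup{Spec}(A)$ with $A$ commutative noetherian, scheme endomorphisms $X\longrightarrow X$ correspond to ring endomorphisms of $A$, and the pullback $f^{*}$ is exactly the base-change functor $M\longmapsto M\otimes_{A,f}A$, reproducing the stated particular case.

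The main obstacle is the implicit hardening of hypotheses required to invoke Theorem~\ref{thm:schemes}: that theorem demands that $F$ be cocontinuous, symmetric, and monoidal, while the present corollary stipulates only right exactness. My plan would be to either (a) read the corollary as tacitly assuming that $F$ respects the tensor structure, in which case the strategy above goes through verbatim, or (b) replace the appeal to Theorem~\ref{thm:schemes} by a coherent-sheaf analogue of Proposition~\ref{prop:rightrep}, proving first that any right exact $F$ on $\textup{\textbf{Coh}}(X)$ is $\otimes$-representable by a coherent $\mathcal{O}_{X}$-bimodule $\mathcal{E}$, and then identifying $\mathcal{E}$ with the sheaf attached to the graph of some $f:X\longrightarrow X$ so as to obtain $F\cong f^{*}$ before applying the $\mathfrak{L}$-functoriality as above.
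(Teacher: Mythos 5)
Your proposal is correct and reduces the statement to the Brandenburg--Chirvasitu classification via Theorem~\ref{thm:schemes}, which is exactly what the paper does; the only difference is how the bridge between $\textup{\textbf{Coh}}(X)$ and $\textup{\textbf{QCoh}}(X)$ is built. You extend $F$ by hand to a cocontinuous endofunctor of $\textup{\textbf{QCoh}}(X)$ using the ind-completion $\textup{\textbf{QCoh}}(X)=\mathrm{Ind}(\textup{\textbf{Coh}}(X))$ and then restrict the resulting isomorphism $\widetilde{F}\cong f^{*}$ back to coherent sheaves, whereas the paper observes that a noetherian scheme is quasi-compact and quasi-separated, identifies $\textup{\textbf{Coh}}(X)$ with $\textup{\textbf{QCoh}}_{fp}(X)$, and cites the finitely-presented variant \cite[Corollary 3.4.4]{BC} directly; both bridges work, yours being more self-contained and the paper's shorter. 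You are also right to flag the mismatch of hypotheses: the corollary asks only for right exactness, but neither your argument nor the paper's can dispense with the symmetric monoidal assumption on $F$ carried over from Theorem~\ref{thm:schemes} --- without it the conclusion is false (tensoring with a nontrivial line bundle, or with a general bimodule in the affine case, is right exact but is not any $f^{*}$), so your reading (a) is the correct one and is what the paper tacitly does. Your fallback (b) would stall at precisely this point, since $\otimes$-representability alone does not force the representing sheaf to be the structure sheaf of the graph of a morphism.
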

\begin{proof}
Firstly, is well-known \cite[Chap. 2]{Hart} that any noetherian algebraic scheme is a quasi-compact and quasi-separated scheme, and $\textbf{QCoh}_{fp}(X)$, the category of quasi-coherent $\mathcal{O}_X$-modules of finite presentation, coincides with the category $\textbf{Coh}(X)$, when $X$ is a noetherian scheme. Thus, the conclusion is an immediately consequence of \cite[Corollary 3.4.4, p.684]{BC} and Theorem \ref{thm:schemes}.

\end{proof}

\begin{corollary}
Let $X$ be quasi-compact and quasi-separated scheme and $F\in\textup{End}(\textup{\textbf{QCoh}}(X))$ a cocontinuous, symmetric, monoidal endofunctor on the tensor category $\textup{\textbf{QCoh}}(X)$ of quasi-coherent sheaves on $X$. Then, $\mathfrak{L}_{F}(\textup{\textbf{QCoh}}(X))$ is an abelian, complete and co-complete category.
\end{corollary}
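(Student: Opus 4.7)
The plan is to reduce the claim to Corollary \ref{cor:adjconsq} by first invoking the preceding Theorem \ref{thm:schemes}. More precisely, the hypotheses on $F$ (cocontinuous, symmetric, monoidal on the tensor category $\textup{\textbf{QCoh}}(X)$ over the quasi-compact quasi-separated scheme $X$) are exactly those of Theorem \ref{thm:schemes}, which furnishes a scheme endomorphism $f:X\longrightarrow X$ together with a natural isomorphism $F\simeq f^{*}$ of endofunctors of $\textup{\textbf{QCoh}}(X)$. By the invariance of the construction under natural isomorphism noted in Remark \ref{rem:LR}, this natural isomorphism lifts to an isomorphism of categories
\[
\mathfrak{L}_{F}(\textup{\textbf{QCoh}}(X))\;\cong\;\mathfrak{L}_{f^{*}}(\textup{\textbf{QCoh}}(X)),
\]
so it is enough to establish the desired properties for $\mathfrak{L}_{f^{*}}(\textup{\textbf{QCoh}}(X))$.

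Next I would check the hypotheses of Corollary \ref{cor:adjconsq} for the endofunctor $f^{*}$ acting on $\mathcal{C}:=\textup{\textbf{QCoh}}(X)$. Two standard geometric facts are needed: first, that for $X$ quasi-compact and quasi-separated the category $\textup{\textbf{QCoh}}(X)$ is a Grothendieck abelian category, and in particular is abelian, complete and co-complete; second, that the canonical pullback $f^{*}$ always admits the direct image $f_{*}$ as a right adjoint, so that $(f^{*},f_{*})$ is an adjoint pair of additive endofunctors of $\textup{\textbf{QCoh}}(X)$. Both facts are classical in the quasi-compact quasi-separated setting and require no further input beyond what was already invoked to apply Theorem \ref{thm:schemes}.

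With these two ingredients in hand, Corollary \ref{cor:adjconsq} applies verbatim to $(f^{*},f_{*})$ on $\mathcal{C}=\textup{\textbf{QCoh}}(X)$ and yields at once that $\mathfrak{L}_{f^{*}}(\textup{\textbf{QCoh}}(X))$ is abelian, complete and co-complete. Combining with the isomorphism displayed above, the same is true of $\mathfrak{L}_{F}(\textup{\textbf{QCoh}}(X))$. There is no real obstacle in the argument; the only point deserving care is bookkeeping the hypothesis "$X$ quasi-compact and quasi-separated", which is used simultaneously to guarantee the existence of the representing endomorphism $f$ in Theorem \ref{thm:schemes} and the Grothendieck (hence bicomplete abelian) property of $\textup{\textbf{QCoh}}(X)$ feeding into Corollary \ref{cor:adjconsq}.
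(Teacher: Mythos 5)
Your proposal is correct and follows essentially the same route as the paper: apply Theorem \ref{thm:schemes} to replace $F$ by $f^{*}$, note that $\textup{\textbf{QCoh}}(X)$ is Grothendieck, and feed the adjoint pair $(f^{*},f_{*})$ into Corollary \ref{cor:adjconsq}. If anything, you state the adjunction more carefully than the paper, which calls $f_{*}$ a \emph{left} adjoint of $f^{*}$ when it is of course the right adjoint, exactly as you have it.
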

\begin{proof}
It is well-known that the category $\textbf{QCoh}(X)$ is a Grothendieck category. Now, by Theorem \ref{thm:schemes}, $\mathfrak{L}_{F}(\textbf{QCoh}(X))$ is isomorphic to the category $\mathfrak{L}_{f^{\ast}}(\textbf{QCoh}(X))$, for certain scheme-endomorphism $f:X\longrightarrow X$. Since $f^{\ast}:\textbf{QCoh}(X)\longrightarrow \textbf{QCoh}(X)$, the canonical pullback endofunctor, admits a natural left adjoint endofunctor $f_{\ast}:\textbf{QCoh}(X)\longrightarrow \textbf{QCoh}(X)$ given by the canonical pushforward, the conclusion is a consequence of Corollary \ref{cor:adjconsq}.

\end{proof}




\vspace{1cm}
\noindent{\bf Acknowledgments. } 

The first author was partially supported by the Coordena\c c\~ao de Aperfei\c coamento de Pessoal de N\'ivel Superior - Brasil (CAPES) - Finance Code 001 and also by CODI (Universidad de Antioquia, UdeA) by project numbers 2020-33713 and 2022-52654. The second author was partially supported by CODI (Universidad de Antioquia, UdeA) by project numbers 2020-33713, 2022-52654 and 2023-62291.


\end{document}